\documentclass[12pt, reqno]{amsart}
\usepackage{dsfont}
\usepackage{enumerate}
\usepackage{amsfonts}
\usepackage{amssymb}
\usepackage{amsthm}

\newtheorem{theorem}{Theorem}[section]
\newtheorem{proposition}[theorem]{Proposition}
\newtheorem{lemma}[theorem]{Lemma}
\newtheorem{corollary}[theorem]{Corollary}
\theoremstyle{definition}

\newtheorem{definition}[theorem]{Definition}

\newtheorem{remark} [theorem] {Remark}

\begin{document}
\title[Operators commuting with $L^\infty$]{Operators commuting with $L^\infty$ on K\"othe--Bochner spaces}

\author{Hülya Duru}

\address{Department of Mathematics, Faculty of Science, Istanbul University, Vezneciler-Istanbul, 34134, Turkey}

\email{hduru@istanbul.edu.tr}

\author{Serkan Ilter}

\address{Department of Mathematics, Faculty of Science, Istanbul University, Vezneciler-Istanbul, 34134, Turkey}

\email{ilters@istanbul.edu.tr}

\author{Arkady Kitover}

\address{Community College of Philadelphia, 1700 Spring Garden St., Philadelphia, PA 19130, USA}

\email{akitover@ccp.edu}

\author{Mehmet Orhon}

\address{University of New Hampshire, 105 Main Street
Durham, NH 03824, USA}

\email{mo@unh.edu}

\subjclass[2020]{Primary 47B38, 47B33; Secondary 46E40, 47A10, 46B42, 46H25}

\date{\today}

\keywords{K\"othe--Bochner spaces, commutant of $L^\infty$, decomposable operators, lifting, weighted composition operators, spectral projections}

\begin{abstract}
Let $E(X)$ be a K\"othe--Bochner space of functions on a $\sigma$-finite measure space with values in a Banach space $X$. We study bounded operators on $E(X)$ that commute with the operators of multiplication by functions from $L^\infty$. If $X$ is separable, every such operator acts pointwise by an essentially bounded $L(X)$-valued function. For a nonseparable $X$, a lifting of $L^\infty$ provides a representation by $L(X,X^{\star\star})$-valued functions, and by $L(X)$-valued functions if $X$ is complemented in $X^{\star\star}$, with the sharp norm bound if $X$ is a dual space. As an application, we show that if the transformation is aperiodic, then the spectral projections of invertible weighted composition operators with operator-valued weights are operators of multiplication by $L(X)$-valued functions whenever $X$ is separable or complemented in $X^{\star\star}$, and that, if moreover the measure is ergodic and $\dim X = m < \infty$, the spectrum of such an operator has at most $m$ connected components. Finally, for $L^p(\mu;X)$, $1 \leq p < \infty$, an aperiodic nonsingular transformation and a weight that is measurable in the operator norm, we prove that the spectral projections are operators of multiplication by projection-valued functions for an arbitrary Banach space $X$, for instance for $X = c_0(\Gamma)$ with $\Gamma$ uncountable. The proof uses an inverse-closedness result for the algebra generated by norm-measurable multipliers and the composition operator.
\end{abstract}

\maketitle

\markboth{Duru, Ilter, Kitover, Orhon}{Operators commuting with $L^\infty$}

\setcounter{section}{-1}
\section{Introduction}

Let $(\Omega, \Sigma, \mu)$ be a complete $\sigma$-finite measure space, let $E$ be a Banach function (K\"othe) space on $\Omega$, let $X$ be a Banach space, and let $E(X)$ be the corresponding K\"othe--Bochner space of (classes of) strongly measurable functions $x : \Omega \to X$ with $\|x(\cdot)\|_X \in E$. Every $f \in L^\infty(\mu)$ defines a bounded operator $x \mapsto fx$ on $E(X)$. In this paper we study the commutant of this algebra of operators. The basic question is whether every bounded operator $T$ on $E(X)$ that commutes with $L^\infty(\mu)$ is \emph{decomposable}, that is, whether there is a function $A : \Omega \to L(X)$ such that
\begin{equation} \label{eqintro}
(Tx)(\omega) = A(\omega)x(\omega) \quad \text{for almost all } \omega \in \Omega, \qquad x \in E(X).
\end{equation}

For $X = \mathds{C}$ the answer is well known: the commutant of $L^\infty(\mu)$ in $L(E)$ is $L^\infty(\mu)$ itself, i.e., the operators of multiplication by functions from $L^\infty(\mu)$ form a maximal commutative subalgebra of $L(E)$ (this is also the case $X = \mathds{C}$ of Theorem~\ref{t1} below). For $E = L^2(\mu)$ and a separable Hilbert space $X$ the answer is given by von Neumann's reduction theory \cite{vN49}: the commutant of the diagonal algebra $L^\infty(\mu)$ acting on $L^2(\mu; X)$ consists exactly of the decomposable operators (\ref{eqintro}), where $A$ is a bounded weakly measurable $L(X)$-valued function and $\|T\| = \operatorname{ess\,sup}_\omega \|A(\omega)\|$ (see also \cite[Part II, Chapter 2]{Di}).

In the setting of Banach function spaces the closely related problem of characterizing the \emph{scalar} multiplication operators has attracted attention. J.~M.~Calabuig, J.~Rodr\'{\i}guez and E.~A.~S\'anchez-P\'erez \cite{CRS} showed that, if $E$ has order continuous norm, an operator on $E(X)$ is an operator of multiplication by a function from $L^\infty(\mu)$ if and only if it satisfies a certain functional equation. In \cite{DKO} we extended their result to arbitrary K\"othe--Bochner spaces and showed that $T$ is such an operator if and only if it commutes with $L^\infty(\mu)$ and leaves invariant the cyclic subspaces generated by the constant functions. Algebras of multiplication operators in Banach function spaces and their bicommutants were studied by B.~de~Pagter and W.~J.~Ricker \cite{dPR}. For spaces of continuous functions, operators commuting with $C(K)$ on $C(K,X)$ are described by the theory of $M$-structure and Banach $C(K)$-modules \cite{Be, AAK}; in that case they are always given by strongly continuous $L(X)$-valued functions (see Remark~\ref{r2}).

Our first results concern the representation (\ref{eqintro}) in general K\"othe--Bochner spaces. If $X$ is separable, every $T$ commuting with $L^\infty(\mu)$ has the form (\ref{eqintro}) with $\sup_\omega \|A(\omega)\| \leq \|T\|$ (Theorem~\ref{t1}). If $X$ is not separable, the countable-dense-set argument fails, and we use instead a lifting of $L^\infty(\mu)$, whose existence goes back to J.~von Neumann \cite{vN31} and D.~Maharam \cite{Ma}. In general the lifting provides a representation (\ref{eqintro}) with $A(\omega) \in L(X, X^{\star\star})$ (Remark~\ref{r1}). If $X$ is complemented in $X^{\star\star}$ by a projection $Q$, we obtain $A(\omega) \in L(X)$ with $\|T\| \leq \sup_\omega \|A(\omega)\| \leq \|Q\|\,\|T\|$ (Theorem~\ref{t2}). In particular, the representation holds with the sharp bound $\|T\|$ for every dual and every reflexive space (Corollary~\ref{c4}). If $\mu$ is purely atomic, (\ref{eqintro}) holds for every Banach space $X$ (Remark~\ref{ratomic}), so the difficulties come entirely from the nonatomic part of $\mu$.

The second part of the paper deals with weighted composition operators $(Tx)(\omega) = a(\omega)x(\varphi(\omega))$ with operator-valued weights. Spectral theory of such operators, and in particular the connection between their spectral projections and hyperbolicity (exponential dichotomy) of the associated linear extensions of dynamical systems, is the subject of an extensive literature; we refer to the books of A.~Antonevich \cite{An}, A.~Antonevich and A.~Lebedev \cite{AL}, and C.~Chicone and Y.~Latushkin \cite{CL}, to the survey of Yu.~Latushkin and A.~Stepin \cite{LS}, and to \cite{AAK}. If $T$ is invertible and $\varphi$ is aperiodic, the spectral projections of $T$ corresponding to arbitrary clopen subsets of $\sigma(T)$ commute with $L^\infty(\mu)$. Therefore, if $X$ is separable or complemented in $X^{\star\star}$, they are decomposable operators (Corollaries~\ref{c1} and \ref{c1a}). If moreover $\mu$ is ergodic with respect to $\varphi$ and $\dim X = m < \infty$, then $\sigma(T)$ has at most $m$ connected components (Corollary~\ref{c2}); this result and \cite[Theorem 9.5]{An} complement each other (Remark~\ref{r0}).

Finally, in Section~\ref{s4} we show that for $E = L^p(\mu)$, $1 \leq p < \infty$, an aperiodic nonsingular transformation $\varphi$, and a weight that is measurable in the operator norm (after the normalization that makes the composition operator an isometry), the spectral projections of $T$ are decomposable for an \emph{arbitrary} Banach space $X$, with pointwise projections $P(\omega)$ satisfying $\|P(\omega)\| \leq \|P\|$ (Theorem~\ref{t4} and Corollary~\ref{c313}). The main case of interest is that of a nonseparable space $X$ that is not complemented in $X^{\star\star}$, for example $X = c_0(\Gamma)$ with $\Gamma$ uncountable (Remark~\ref{r6}(4)). Here commutation with $L^\infty(\mu)$ is not enough, and we have to show that the spectral projection belongs to the closed algebra generated by the multiplication operators and the composition operator. The scheme of the proof goes back to Antonevich and Lebedev \cite{AL, An}; the main new ingredient is an inverse-closedness result for this (non-$C^\star$) algebra (Proposition~\ref{p310}), which is proved by restricting the operators to the orbits of $\varphi$.

\subsection*{Notation}
Throughout the paper we use the following notation.
\begin{itemize}
\item $\mathds{N}$, $\mathds{Z}$, $\mathds{R}$, $\mathds{C}$ are the sets of natural numbers, integers, real numbers and complex numbers; $\mathds{Q}(i) = \{p + iq : p, q \text{ rational}\}$ is the field of complex rational numbers.
\item $\mathds{T}$, $\mathds{U}$, and $\mathds{D}$ are the unit circle, the open unit disk, and the closed unit disk in $\mathds{C}$.
\item $(\Omega, \Sigma, \mu)$ is a complete measure space with a $\sigma$-finite measure $\mu$. To avoid trivialities we assume throughout that $\mu(\Omega) > 0$; if $\mu(\Omega) = 0$ then $E(X) = \{0\}$ and all the statements below are trivial; $L^0(\Omega, \Sigma, \mu)$ is the space of (classes of) measurable functions on $\Omega$; $L^\infty(\mu) = L^\infty(\Omega, \Sigma, \mu)$; $\mathcal{L}^\infty(\Omega, \Sigma, \mu)$ is the space of all bounded $\mu$-measurable functions (not classes) with the supremum norm.
\item $\chi_G$ is the indicator function of $G \in \Sigma$; for $\mu(G) < \infty$, $P_G$ is the operator of multiplication by $\chi_G$. A function $f \in L^\infty(\mu)$ is identified with the operator of multiplication by $f$, and we say that $T$ \emph{commutes with $L^\infty$} if $Tf = fT$ for all $f \in L^\infty(\mu)$.
\item $E \subset L^0(\Omega, \Sigma, \mu)$ is a Banach function (K\"othe) space such that $\chi_G \in E$ whenever $\mu(G) < \infty$.
\item $X$ and $Y$ are Banach spaces, $X^\star$ and $X^{\star\star}$ are the dual and the second dual of $X$, and $\langle x, x^\star \rangle$ is the value of $x^\star \in X^\star$ at $x \in X$. $L(X,Y)$ is the space of bounded linear operators from $X$ into $Y$, and $L(X) = L(X,X)$. $\sigma(T)$ is the spectrum of an operator $T$.
\item $E(X)$ is the K\"othe--Bochner space of (classes of) strongly measurable functions $x : \Omega \to X$ such that $\|x(\cdot)\| \in E$, with the norm $\|x\| = \big\| \|x(\cdot)\| \big\|_E$. For $x \in X$, $\vec{x}$ is the constant function $\vec{x}(\omega) \equiv x$.
\item $L^p(Y) = L^p(\Omega, \Sigma, \mu; Y)$, $1 \leq p < \infty$, and $\ell^p(\mathds{Z}; Y)$ is the space of $p$-summable two-sided sequences in $Y$.
\item $\varphi : \Omega \to \Omega$ is an invertible measurable map, $\varphi^n$ is its $n$-th iterate, $n \in \mathds{Z}$, and $(Vx)(\omega) = x(\varphi(\omega))$ is the corresponding composition operator; in Section~\ref{s4} the letter $V$ denotes the normalized composition operator (\ref{eqV}), which is an isometry of $L^p$. An operator of the form $(Tx)(\omega) = a(\omega)x(\varphi(\omega))$, where $a : \Omega \to L(X)$, is called a \emph{weighted composition operator}; the function $a$ is its \emph{weight}.
\item $\Pi_r$ denotes the spectral projection of an invertible weighted composition operator corresponding to the part of its spectrum inside the circle $\{|\lambda| = r\}$ (Sections~\ref{s3} and~\ref{s4}).
\item $L^\infty_u(\Omega; L(Y))$ is the space of essentially bounded functions $b : \Omega \to L(Y)$ that are measurable with respect to the operator norm (\emph{norm-measurable} functions), and $M_b$ is the operator $(M_bx)(\omega) = b(\omega)x(\omega)$ (Section~\ref{s4}).
\item The algebras $\mathcal{A}_u$, $\mathcal{B}_0$, $\mathcal{B}$, the map $N_0$, the orbit operators $\pi_\omega$, the shift $S$ on $\ell^p(\mathds{Z};Y)$, and the matrix entries $C[n,m]$ are defined in Section~\ref{s4}.
\end{itemize}

\section{The case of separable $X$}

 \begin{definition} \label{d1} Let us recall that a function $z \in E(X)$ is called $\sigma$-valued if there are a nonempty finite or countable set $I \subseteq \mathds{N}$, vectors $z_i \in X$, $i \in I$, and pairwise disjoint sets $G_i \in \Sigma$, $\mu(G_i) > 0$, $i \in I$, such that $\mu(\Omega \setminus \bigcup \limits_{i \in I} G_i) = 0$ and
  $\chi_{G_i}z = \chi_{G_i}\vec{z}_i$, $i \in I$.
  \end{definition}

The index set $I$ is allowed to be finite: if, for example, $\Omega$ consists of finitely many atoms, there is no infinite family of pairwise disjoint sets of positive measure.

The following lemma allows us to reduce the case of a $\sigma$-finite measure to the case of a probability measure. Notice that the reduction requires not only a change of the measure but also a change of the space $E$: if $\mu(\Omega) = \infty$ then, in general, $\chi_\Omega \not\in E$.

\begin{lemma} \label{l0}
Let $(\Omega, \Sigma, \mu)$ be a complete measure space with a $\sigma$-finite measure $\mu$ such that $\mu(\Omega) > 0$, let $E \subset L^0(\Omega, \Sigma, \mu)$ be a Banach function space such that $\chi_G \in E$ whenever $\mu(G) < \infty$, let $X$ be a Banach space, and let $T$ be a bounded linear operator on $E(X)$ such that $TP_G = P_GT$ for all $G \in \Sigma$ with $\mu(G) < \infty$. Then there exist
\begin{enumerate}[(i)]
  \item a probability measure $\nu$ on $\Sigma$ such that $\nu(G) = 0 \Leftrightarrow \mu(G) = 0$, $G \in \Sigma$; in particular, $(\Omega, \Sigma, \nu)$ is complete, $L^0(\Omega, \Sigma, \nu) = L^0(\Omega, \Sigma, \mu)$ and $L^\infty(\nu) = L^\infty(\mu)$;
  \item a function $u \in E$ such that $u(\omega) > 0$ for all $\omega \in \Omega$ and such that the Banach function space $E_u = \{f \in L^0 : uf \in E\}$, $\|f\|_{E_u} = \|uf\|_E$, contains $\chi_\Omega$,
\end{enumerate}
with the following properties. The map $J : E_u(X) \to E(X)$, $(Jy)(\omega) = u(\omega)y(\omega)$, is a surjective linear isometry, and the operator $\tilde{T} = J^{-1}TJ$ on $E_u(X)$ satisfies $\|\tilde{T}\| = \|T\|$ and $\tilde{T}P_G = P_G\tilde{T}$ for all $G \in \Sigma$. If $T$ commutes with $L^\infty$, then so does $\tilde{T}$.

Moreover, if $\bar{\Omega} \subset \Omega$, $\mu(\Omega \setminus \bar{\Omega}) = 0$, and $A : \bar{\Omega} \to L(X)$ are such that $\tilde{T}y(\omega) = A(\omega)y(\omega)$ for almost all $\omega$ for every $y \in E_u(X)$, then $Tx(\omega) = A(\omega)x(\omega)$ for almost all $\omega$ for every $x \in E(X)$.
\end{lemma}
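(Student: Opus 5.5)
The plan is to build $u$ and $\nu$ from a partition of $\Omega$ into sets of finite measure, and then to handle everything else by transport through $J$.

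\emph{Construction of $u$ and $\nu$.} Using $\sigma$-finiteness, I write $\Omega = \bigcup_n \Omega_n$ with pairwise disjoint $\Omega_n \in \Sigma$ and $\mu(\Omega_n) < \infty$. Each $\chi_{\Omega_n}$ lies in $E$. I set
\[
u = \sum_n 2^{-n}\bigl(1 + \|\chi_{\Omega_n}\|_E\bigr)^{-1}\chi_{\Omega_n}.
\]
The series converges absolutely in $E$, so $u \in E$, and $u(\omega) > 0$ everywhere; the sets $\Omega_n$ need not have positive measure for this. Next, $E_u$ is a Banach function space: it is the image of $E$ under the lattice isomorphism $f \mapsto f/u$, and it is an ideal in $L^0$ with the Fatou/completeness properties inherited from $E$. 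Since $u\chi_\Omega = u \in E$, we get $\chi_\Omega \in E_u$. For the measure I take $\nu(G) = \int_G w\,d\mu$ with $w = \sum_n 2^{-n}\chi_{\Omega_n}/\bigl(\mu(\Omega_n) c\bigr)$, omitting the terms with $\mu(\Omega_n) = 0$ and choosing $c$ to normalise. Then $w > 0$ $\mu$-a.e., so $\nu$ and $\mu$ are mutually absolutely continuous. Completeness and the equalities of $L^0$ and $L^\infty$ follow immediately.

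\emph{The isometry $J$.} Multiplication by the scalar function $u$ preserves strong measurability in both directions, because $u$ and $1/u$ are measurable. We have $\|Jy\|_{E(X)} = \bigl\|u\|y(\cdot)\|\bigr\|_E = \|y\|_{E_u(X)}$, and $J^{-1}x = x/u$. So $J$ is a surjective isometry, and $\|\tilde T\| = \|T\|$ follows. Any multiplication by a scalar function $f \in L^\infty$ commutes with $J$. Hence if $T$ commutes with $L^\infty$, so does $\tilde T$, and $\tilde T P_G = P_G \tilde T$ whenever $\mu(G) < \infty$.

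\emph{Extending to all $G \in \Sigma$; the main obstacle.} The hypothesis only gives commutation with $P_G$ for $\mu(G) < \infty$, whereas the conclusion needs it for every $G \in \Sigma$. Put $G_N = G \cap \bigcup_{n \le N}\Omega_n$. For $y \in E_u(X)$ I would compare $P_{G_N}\tilde T y$ with $P_G\tilde T y$ and $\tilde T P_{G_N} y$ with $\tilde T P_G y$. The trouble is that convergence $P_{G_N}z \to P_G z$ in norm requires order continuity, which $E$ may lack. I would avoid norm convergence and argue pointwise a.e. instead. Let $H = \bigcup_{n \le N}\Omega_n$, so that $P_H$ is a legitimate operator. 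Since $P_{G_N} = P_G P_H$ and $P_G$ is bounded on $E_u(X)$, I get
\[
P_H P_G \tilde T y = P_{G_N}\tilde T y = \tilde T P_{G_N} y = \tilde T P_H P_G y = P_H \tilde T P_G y .
\]
Thus $P_G\tilde T y$ and $\tilde T P_G y$ agree a.e. on each $\bigcup_{n \le N}\Omega_n$, and therefore a.e. on $\Omega$.

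\emph{The ``moreover'' part.} Given $x \in E(X)$, set $y = J^{-1}x$. Then
\[
(Tx)(\omega) = u(\omega)(\tilde T y)(\omega) = u(\omega)A(\omega)y(\omega) = A(\omega)x(\omega)
\]
a.e., using the linearity of $A(\omega)$ and the fact that $u$ is a scalar.
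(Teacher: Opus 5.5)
Your proof is correct and follows the paper's argument. You use the same $u$, an equivalent density $w$, the transfer through $J$, and the same localisation to finite-measure pieces to get commutation with every $P_G$ (you do it for $\tilde T$ with finite unions, the paper for $T$ with single $\Omega_n$). The one detail you skip is why the $E$-norm limit of the series defining $u$ coincides with its pointwise sum $u$. The paper settles this by noting that norm convergence in $E$ implies convergence in measure on sets of finite measure.
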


\begin{proof}
Let $\Omega = \bigcup \limits_{n=1}^\infty \Omega_n$, where the sets $\Omega_n \in \Sigma$ are pairwise disjoint and $\mu(\Omega_n) < \infty$. Then $\chi_{\Omega_n} \in E$.

(i) Let $w = \sum \limits_{n=1}^\infty 2^{-n}(1 + \mu(\Omega_n))^{-1}\chi_{\Omega_n}$. Then $w(\omega) > 0$ for all $\omega \in \Omega$ and $0 < \int_\Omega w \, d\mu \leq 1$. Let $\nu(G) = \int_G w \, d\mu \big/ \int_\Omega w \, d\mu$, $G \in \Sigma$. Because $w > 0$ on $\Omega$, the measures $\mu$ and $\nu$ have the same sets of measure zero, and all the remaining statements of (i) follow.

(ii) Let $u = \sum \limits_{n=1}^\infty 2^{-n}(1 + \|\chi_{\Omega_n}\|_E)^{-1}\chi_{\Omega_n}$. The series converges absolutely in $E$, and since its partial sums increase and converge pointwise to $u$, we have $u \in E$ (recall that norm convergence in $E$ implies convergence in measure on sets of finite measure, see~\cite[Theorem~1, p.~96]{KA}). Clearly $u(\omega) > 0$ for all $\omega \in \Omega$. The map $f \mapsto uf$ is a linear isometry of $E_u$ onto $E$, whose inverse is $g \mapsto g/u$, and $|f_1| \leq |f_2|$ implies $|uf_1| \leq |uf_2|$. Therefore $E_u$ is a Banach function space on $(\Omega, \Sigma, \nu)$ (any additional lattice properties of $E$, such as the Fatou property or order continuity of the norm, are inherited by $E_u$), and $\chi_\Omega \in E_u$ because $u \in E$.

Because $u$ is a measurable, finite and strictly positive scalar function, a function $y : \Omega \to X$ is strongly measurable if and only if $uy$ is strongly measurable, and $\|(uy)(\cdot)\| = u\|y(\cdot)\|$. Hence $y \in E_u(X)$ if and only if $Jy = uy \in E(X)$, and $\|Jy\|_{E(X)} = \|y\|_{E_u(X)}$. Thus $J$ is a surjective linear isometry and $\|\tilde{T}\| = \|T\|$. Clearly $J$ commutes with the operators of multiplication by functions from $L^\infty$.

Let $G \in \Sigma$ be arbitrary, and let $x \in E(X)$. For every $n \in \mathds{N}$ we have $\mu(G \cap \Omega_n) < \infty$ and therefore
\begin{equation*}
\chi_{\Omega_n}(\chi_G Tx) = P_{G \cap \Omega_n}Tx = TP_{G \cap \Omega_n}x = TP_{\Omega_n}(\chi_G x) = \chi_{\Omega_n}T(\chi_G x).
\end{equation*}
Since $\bigcup_n \Omega_n = \Omega$, we obtain $\chi_G Tx = T(\chi_G x)$, i.e., $TP_G = P_GT$ for every $G \in \Sigma$. Because $J$ commutes with $P_G$, so does $\tilde{T} = J^{-1}TJ$. The same argument shows that if $T$ commutes with $L^\infty$ then so does $\tilde{T}$.

Finally, let $A$ be as in the last statement of the lemma, and let $x \in E(X)$. Put $y = J^{-1}x = x/u \in E_u(X)$. Then for almost all $\omega$
\begin{equation*}
Tx(\omega) = (J\tilde{T}y)(\omega) = u(\omega)A(\omega)y(\omega) = A(\omega)\big(u(\omega)y(\omega)\big) = A(\omega)x(\omega). \qedhere
\end{equation*}
\end{proof}

\begin{theorem} \label{t1}
 Let $(\Omega, \Sigma, \mu)$ be a complete measure space with a $\sigma$-finite measure $\mu$, and let $E$ be a Banach function space such that $\chi_G \in E$ for every $G \in \Sigma$ with $\mu(G) < \infty$.
  Let $X$ be a separable Banach space, and let $E(X)$ be the corresponding K\"othe--Bochner space. For $G \in \Sigma$, $\mu(G) < \infty$, let $P_G$ be the projection on $E(X)$ corresponding to the operator of multiplication by $\chi_G$.
 Let $T$ be a bounded linear operator on $E(X)$ such that
  \begin{equation}\label{eq1}
    TP_G = P_GT, \quad G \in \Sigma, \ \mu(G) < \infty.
 \end{equation}
   Then there are a set $\bar{\Omega} \in \Sigma$ with $\mu(\Omega \setminus \bar{\Omega}) = 0$ and a function $A: \bar{\Omega} \rightarrow L(X)$ such that $\|A(\omega)\| \leq \|T\|$ for all $\omega \in \bar{\Omega}$ and, for every $x \in E(X)$,
   \begin{equation*}
     Tx(\omega) = A(\omega)x(\omega) \quad \text{for almost all } \omega \in \Omega.
   \end{equation*}
   In particular, $\sup_{\omega \in \bar{\Omega}} \|A(\omega)\| = \|T\|$.
\end{theorem}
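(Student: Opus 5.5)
First, Lemma~\ref{l0} reduces the problem to the case where $\mu$ is (equivalent to) a probability measure and $\chi_\Omega \in E$. Its last statement transfers a pointwise representation for $\tilde T$ back to $T$, and $\|\tilde T\| = \|T\|$. So I will assume $\chi_\Omega\in E$ and $TP_G = P_GT$ for all $G\in\Sigma$. Then every constant function $\vec x$ lies in $E(X)$, with $\|\vec x\| = \|x\|\,\|\chi_\Omega\|_E$.

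Let $D\subset X$ be a countable dense set that is a vector space over $\mathds{Q}(i)$; such a set exists by separability. For each $d\in D$ I fix a representative of $T\vec d$. Since $D$ is countable, linearity of $T$ gives a single null set off which $T\vec{(\alpha d+\beta d')}(\omega)=\alpha T\vec d(\omega)+\beta T\vec{d'}(\omega)$ for all $\alpha,\beta\in\mathds{Q}(i)$ and $d,d'\in D$.

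The key estimate is $\|T\vec d(\omega)\|\le \|T\|\,\|d\|$ almost everywhere. Suppose instead that $G=\{\omega:\|T\vec d(\omega)\|>(\|T\|+\varepsilon)\|d\|\}$ has positive measure. I would use $P_G$-commutation and localize. For $H\subset G$, $\|T(\chi_H\vec d)\| = \|\chi_H T\vec d\|\ge (\|T\|+\varepsilon)\|d\|\,\|\chi_H\|_E$, while $\|T(\chi_H \vec d)\|\le \|T\|\,\|d\|\,\|\chi_H\|_E$. This contradicts $\|\chi_H\|_E>0$. So no rescaling trick is needed: the lattice norm gives the bound directly.

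Discarding the countably many resulting null sets gives $\bar\Omega$. On $\bar\Omega$ the map $d\mapsto T\vec d(\omega)$ is $\mathds{Q}(i)$-linear and bounded by $\|T\|$ on $D$, so it extends uniquely to $A(\omega)\in L(X)$ with $\|A(\omega)\|\le\|T\|$.

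Next I verify $Tx(\omega)=A(\omega)x(\omega)$ a.e. for all $x\in E(X)$.
\begin{enumerate}[(a)]
\item For $x=\sum_i\chi_{G_i}\vec{d_i}$ with $d_i\in D$ and a countable partition $\{G_i\}$, I use $P_{G_i}$-commutation: $\chi_{G_i}Tx = T(\chi_{G_i}x)=\chi_{G_i}T\vec{d_i}=\chi_{G_i}A(\cdot)d_i$.
\item For a general $x$ I approximate. A strongly measurable $x$ is an a.e. uniform limit of $\sigma$-valued functions with values in $D$: given $\varepsilon$, partition by which ball $B(d_k,\varepsilon\|x(\omega)\|)$ first contains $x(\omega)$, choosing $d_k$ with $\|x-z\|\le\varepsilon\|x(\cdot)\|$ pointwise. (At points with $x(\omega)=0$, take $z(\omega)=0$.) Then $\|x-z\|_{E(X)}\le\varepsilon\|x\|$.
\item Both $x\mapsto Tx$ and $x\mapsto A(\cdot)x(\cdot)$ are bounded. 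For the latter, $\|A(\omega)x(\omega)\|\le\|T\|\,\|x(\omega)\|$, and measurability holds as a pointwise limit of measurable functions. Norm convergence in $E(X)$ gives a.e. convergence along a subsequence, so equality passes to the limit.
\end{enumerate}

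For the final claim, the pointwise representation gives $\|Tx\|\le\sup\|A(\omega)\|\,\|x\|$, so $\sup_{\omega\in\bar\Omega}\|A(\omega)\| = \|T\|$.

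The main obstacle is the a.e. bound on $\|T\vec d(\omega)\|$, together with managing the exceptional null sets uniformly. The approximation in step (b) needs care only to ensure that $z$ is a genuine $\sigma$-valued element of $E(X)$, which is dominated by $(1+\varepsilon)\|x(\cdot)\|$.
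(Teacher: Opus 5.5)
Your proof is correct and follows the paper's argument essentially step by step. The common steps are the reduction via Lemma~\ref{l0}, defining $A(\omega)$ on a countable dense $\mathds{Q}(i)$-linear subspace through fixed representatives of $T\vec d$, the a.e.\ bound $\|T\vec d(\omega)\|\le\|T\|\,\|d\|$ obtained by localizing to $\chi_H\vec d$, extension by continuity, verification on countably valued $D$-valued functions via $P_{G_i}$-commutation, and passage to the limit along an a.e.\ convergent subsequence. The only minor difference is that you prove density of these functions directly, via the pointwise bound $\|x(\omega)-z(\omega)\|\le\varepsilon\|x(\omega)\|$ (relative rather than uniform, as your own construction shows), instead of citing \cite[Lemma 3.2]{DKO} and then replacing the values by elements of the dense subspace.
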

\begin{proof}
By Lemma~\ref{l0} we can replace $\mu$, $E$ and $T$ by $\nu$, $E_u$ and $\tilde{T}$, respectively: the hypotheses of the theorem are preserved, $\|\tilde{T}\| = \|T\|$, and a function $A$ representing $\tilde{T}$ represents $T$ as well. Therefore we can assume without loss of generality that $(\Omega, \Sigma, \mu)$ is a complete probability space and that $\chi_\Omega \in E$.

  Let $Y = \{x_i: i \in \mathds{N}\}$ be a dense $\mathds{Q}(i)$-linear subspace of $X$. For every $i \in \mathds{N}$ we define $\vec{x}_i$ by $\vec{x}_i(\omega) \equiv x_i$, $\omega \in \Omega$. Notice that because
  $\chi_\Omega \in E$ we have $\vec{x}_i \in E(X)$. We fix representatives of the classes $T\vec{x}_i \in E(X)$, $i \in \mathds{N}$.

For every $i \in \mathds{N}$ the equality
  \begin{equation} \label{eq2}
  A(\omega)x_i = T\vec{x}_i(\omega)
  \end{equation}
  defines $A(\omega)x_i$ for every $\omega \in \Omega$. Because the operator $T$ is linear and $Y$ is countable, the equality~(\ref{eq2}) defines a $\mathds{Q}(i)$-linear operator $A(\omega): Y \rightarrow X$ for all $\omega$ from a set $\Omega_1 \subset \Omega$ such that $\mu(\Omega \setminus \Omega_1) = 0$.

  Let us fix an $i \in \mathds{N}$. We claim that there is a subset $N_i$ of $\Omega$ such that $\mu(N_i) =0$ and
      \begin{equation}\label{eq3}
\|A(\omega)x_i\| \leq \|T\|\|x_i\|, \quad \omega \in \Omega_1 \setminus N_i.
      \end{equation}
       Indeed, assume that~(\ref{eq3}) is not true. By~(\ref{eq2}) the function $\omega \mapsto \|A(\omega)x_i\|$ is measurable, and therefore there are an $F \in \Sigma$, $F \subset \Omega_1$, and an $\varepsilon >0$ such that $0 < \mu(F) < \infty$ and $\|A(\omega)x_i\| \geq (\|T\|+\varepsilon)\|x_i\|$ for all $\omega \in F$. Let
  $u_i = \chi_F \vec{x}_i \in E(X)$. Because $Tu_i = T\chi_F \vec{x}_i = \chi_F T\vec{x}_i$, the definition of the norm in $E(X)$ gives $\|Tu_i\| \geq (\|T\| + \varepsilon)\|u_i\|$, a contradiction.

  Let $\bar{\Omega} = \Omega_1 \setminus \bigcup \limits_{i=1}^\infty N_i$. Then $\mu(\bar{\Omega}) = 1$ and for every $\omega \in \bar{\Omega}$ the equality~(\ref{eq2}) defines a $\mathds{Q}(i)$-linear operator $A(\omega) : Y \rightarrow X$ such that $\|A(\omega)\| \leq \|T\|$.

  Therefore for every $\omega \in \bar{\Omega}$ the operator $A(\omega)$ can be extended uniquely to a bounded linear operator from $X$ to $X$ (the extension is additive and $\mathds{Q}(i)$-homogeneous, hence $\mathds{C}$-homogeneous by continuity). We still denote this extension by $A(\omega)$. Notice that $\|A(\omega)\| \leq \|T\|$ for $\omega \in \bar{\Omega}$.

  It is well known (see e.g.~\cite[Lemma 3.2]{DKO}; the proof there works without changes for finite or countable families of sets $G_i$) that the set of $\sigma$-valued functions is dense in $E(X)$. We consider a subset $Z$ of the set of all $\sigma$-valued functions in $E(X)$. Namely, we will say that $z \in Z$ if we can choose the vectors $z_i$ and the sets $G_i$ in Definition~\ref{d1} in such a way that $z_i \in Y$ (below we denote these vectors by $y_i$), $G_i \subset \bar{\Omega}$, and
  $\bigcup \limits_{i \in I} G_i = \bar{\Omega}$.

  Since $Y$ is dense in $X$ and $\chi_\Omega \in E$, every $\sigma$-valued function can be approximated in $E(X)$ by elements of $Z$, and therefore $Z$ is dense in $E(X)$. We claim that for any $z \in Z$ we have
  $Tz(\omega) = A(\omega)z(\omega)$ for almost all $\omega \in \Omega$.
  Indeed, let $z \in Z$ and let $G_i \in \Sigma$ be such that $G_i \subset \bar{\Omega}$,
  $\mu(G_i) >0$, and $\chi_{G_i}z = \chi_{G_i}\vec{y}_i$, where $y_i \in Y$.
  Then
  $\chi_{G_i}Tz = T\chi_{G_i}z = T\chi_{G_i}\vec{y}_i = \chi_{G_i}T\vec{y}_i$. Therefore, for almost all $\omega \in G_i$ we have $Tz(\omega) = T\vec{y}_i(\omega) = A(\omega)y_i = A(\omega)z(\omega)$. Because
  $\bigcup \limits_{i \in I} G_i = \bar{\Omega}$ and $I$ is countable, the claim follows.

  Let us now consider an $x \in E(X)$ and a sequence $z_n \in Z$ converging to $x$ in $E(X)$. Then $Tz_n \to Tx$ in $E(X)$, and therefore the scalar functions $\|z_n(\cdot) - x(\cdot)\|$ and $\|Tz_n(\cdot) - Tx(\cdot)\|$ converge to $0$ in $E$, hence in measure on sets of finite measure (see e.g.~\cite[Theorem~1, p.~96]{KA}). Passing to a subsequence we may assume that $z_n(\omega) \to x(\omega)$ and $Tz_n(\omega) \to Tx(\omega)$ in norm for almost all $\omega$. Fix such an $\omega \in \bar{\Omega}$ for which, in addition, $Tz_n(\omega) = A(\omega)z_n(\omega)$ for all $n$. Since $A(\omega) \in L(X)$, we have $A(\omega)z_n(\omega) \to A(\omega)x(\omega)$, and therefore $A(\omega)x(\omega) = Tx(\omega)$. Thus $Tx(\omega) = A(\omega)x(\omega)$ for almost all $\omega \in \Omega$. In particular, the function $\omega \mapsto A(\omega)x(\omega)$ is almost everywhere the limit of the strongly measurable functions $Tz_n$, hence it is strongly measurable, and the inequality $\|A(\omega)\| \leq \|T\|$ shows that it belongs to $E(X)$.

  Finally, the inequality $\|T\| \leq \sup_{\omega \in \bar{\Omega}} \|A(\omega)\|$ follows from $\|Tx(\omega)\| \leq \|A(\omega)\|\,\|x(\omega)\|$ and the definition of the norm in $E(X)$.
    \end{proof}

\begin{remark} \label{rnotnorm}
The function $A$ in Theorem~\ref{t1} is measurable in the strong operator topology, i.e., the functions $\omega \mapsto A(\omega)x$, $x \in X$, are measurable, but in general it is not measurable as an $L(X)$-valued function with the operator norm, even if $T$ is very simple. Let $\Omega = [0,1]$ with Lebesgue measure, let $E$ be an arbitrary Banach function space on $[0,1]$ as in Theorem~\ref{t1}, let $X = L^2[0,1]$, and let $b(t) \in L(X)$ be the operator of multiplication by $\chi_{[0,t]}$. Since $\|b(t)y - b(s)y\| = \|\chi_{(s,t]}y\|_2 \to 0$ as $s \to t$, the map $t \mapsto b(t)$ is continuous in the strong operator topology. Hence for every $x \in E(X)$ the function $t \mapsto b(t)x(t)$ is strongly measurable (it is the limit of $b(t)s_n(t)$ for countably valued $s_n \to x$), and $(Tx)(t) = b(t)x(t)$ defines an operator $T$ on $E(X)$ with $\|T\| \leq 1$ that commutes with $L^\infty$. If $A$ is any function as in Theorem~\ref{t1}, then $A(t)x_i = (T\vec{x}_i)(t) = b(t)x_i$ for a dense sequence $(x_i)$ in $X$ and almost all $t$, so that $A(t) = b(t)$ for almost all $t$. On the other hand, $\|b(t) - b(s)\| = 1$ for $s \neq t$, so for every set $N$ of measure zero the set $\{b(t) : t \notin N\}$ is not separable. Therefore no function $A$ representing $T$ is the almost everywhere limit, in the operator norm, of countably valued measurable functions. This is the reason why in Section~\ref{s4} we have to assume norm-measurability of the weight explicitly (see Remark~\ref{r30}).
\end{remark}

    \section{The case when $X$ is complemented in $X^{\star \star}$}

    The arguments used in the proof of Theorem~\ref{t1} fail in the case when the Banach space $X$ is not separable. Nevertheless, this difficulty can be partially circumvented by employing a lifting of $L^\infty$.

    Let $(\Omega, \Sigma, \mu)$ be a complete measure space with a $\sigma$-finite measure.
    Let $\mathcal{L}^\infty(\Omega, \Sigma, \mu)$ be the Banach space of all bounded $\mu$-measurable functions on $\Omega$ with the supremum norm.  When it does not cause any ambiguity we will use the notation $\mathcal{L}^\infty$. Then (see~\cite{vN31, Ma}) there is a map $\Lambda: L^\infty \rightarrow \mathcal{L}^\infty$ with the following properties:
    \begin{enumerate}[(a)]
      \item $\Lambda(f) = f$ almost everywhere;
      \item $\Lambda(\alpha f + \beta g) = \alpha \Lambda(f) + \beta \Lambda(g)$, $\alpha, \beta \in \mathds{C}$;
      \item $\Lambda(fg) = \Lambda(f)\Lambda(g)$;
      \item $\|\Lambda(f)\| = \|f\|$.
    \end{enumerate}
A map $\Lambda$ with properties (a)--(d) is called a lifting of $L^\infty$.

The following lemma is a special case of a known property of liftings (see~\cite[Chapter VI, \S5, Proposition 2, p.~78]{IT}, applied, in the notation of \cite{IT}, with $E = X^\star$ and $E' = X^{\star\star}$). We include the proof for the reader's convenience.

\begin{lemma} \label{l1}
  Let $(\Omega, \Sigma, \mu)$ be a complete measure space with a $\sigma$-finite measure $\mu$, let $\Lambda$ be a lifting of $L^\infty$, and let $f : \Omega \rightarrow X$ be a strongly measurable function such that $\|f(\omega)\| \leq C$ for almost all $\omega \in \Omega$. Then for every $\omega \in \Omega$ there is a unique $\hat{f}(\omega) \in X^{\star\star}$ such that
  \begin{equation*}
    \langle \hat{f}(\omega), x^\star \rangle = \Lambda(\langle f, x^\star \rangle)(\omega), \quad x^\star \in X^\star,
  \end{equation*}
  and there is a set $N \in \Sigma$, $\mu(N) = 0$, such that $\hat{f}(\omega) = f(\omega)$ for all $\omega \in \Omega \setminus N$.
\end{lemma}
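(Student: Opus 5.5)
The plan has two parts: first define $\hat f(\omega)$ pointwise and show it lies in $X^{\star\star}$, then show $\hat f = f$ almost everywhere by reducing to the countably valued case.

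\emph{Existence and uniqueness of $\hat f(\omega)$.} For each $x^\star \in X^\star$ the scalar function $\langle f, x^\star\rangle$ is measurable, and $|\langle f(\omega), x^\star\rangle| \le C\|x^\star\|$ a.e., so it defines an element of $L^\infty$ with norm at most $C\|x^\star\|$. Fix $\omega$ and consider the functional
\[
\Phi_\omega(x^\star) = \Lambda(\langle f, x^\star\rangle)(\omega).
\]
By (b), $\Phi_\omega$ is linear, since $\langle f,\alpha x^\star+\beta y^\star\rangle = \alpha\langle f,x^\star\rangle+\beta\langle f,y^\star\rangle$ as classes. By (d), $|\Phi_\omega(x^\star)| \le \|\Lambda(\langle f,x^\star\rangle)\|_\infty = \|\langle f,x^\star\rangle\|_{L^\infty} \le C\|x^\star\|$. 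Hence $\Phi_\omega \in X^{\star\star}$; we set $\hat f(\omega)=\Phi_\omega$, which is clearly unique. This step is routine.

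\emph{$\hat f = f$ a.e.: countably valued case.} A lifting is positive and preserves indicator functions: by (c), $\Lambda(\chi_G)$ is an idempotent, so it is an indicator $\chi_{G'}$ with $G'$ a.e. equal to $G$. By (b) it also preserves constants. So if $g=\sum_i \chi_{G_i} y_i$ with $G_i$ a countable disjoint partition, then for every $x^\star$ and every $\omega\in\Omega$ we have $\Lambda(\langle g,x^\star\rangle)(\omega)=\langle y_i,x^\star\rangle$ whenever $\omega\in G_i'$. For finitely many pieces this follows from linearity. For countably many, write $\langle g,x^\star\rangle = \chi_{G_i}\langle y_i,x^\star\rangle + \chi_{\Omega\setminus G_i}\langle g,x^\star\rangle$ and use (c): $\Lambda(\chi_{\Omega\setminus G_i})$ vanishes on $G_i'$. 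Since the $G_i'$ cover $\Omega$ up to a null set $N_g$, this gives $\hat g(\omega)=g(\omega)$ for all $\omega\notin N_g$, and $N_g$ does not depend on $x^\star$.

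\emph{General case: the main obstacle.} The difficulty is that the exceptional null set must not depend on $x^\star$; $X^\star$ may be nonseparable, so we cannot intersect over countably many functionals. To get around this, use strong measurability (Pettis) to pick countably valued $g_n$ with $\|g_n\|\le C$ a.e. (truncate) and $\|f-g_n\|_\infty$ small off sets $B_n$ of small measure. Choosing instead uniform approximation is possible: strong measurability means $f$ is essentially separably valued, so we can take countably valued $g_n$ with $\|f(\omega)-g_n(\omega)\|\le 1/n$ for all $\omega$ off one null set $M$. Then for each $x^\star$,
\[
\|\langle f,x^\star\rangle-\langle g_n,x^\star\rangle\|_{L^\infty}\le \|x^\star\|/n,
\]
and by linearity and (d) we get $|\langle \hat f(\omega)-\hat g_n(\omega),x^\star\rangle|\le\|x^\star\|/n$ for \emph{every} $\omega$. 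That is, $\|\hat f(\omega)-\hat g_n(\omega)\|_{X^{\star\star}}\le 1/n$ everywhere. Let $N=M\cup\bigcup_n N_{g_n}$. For $\omega\notin N$,
\[
\|\hat f(\omega)-f(\omega)\| \le \|\hat f(\omega)-g_n(\omega)\| + \|g_n(\omega)-f(\omega)\| \le 2/n \to 0,
\]
so $\hat f(\omega)=f(\omega)$. The key trick is uniform approximation of $f$ off one null set by countably valued functions, which is available because $f$ is a.e. separably valued and measurable: take $\{d_k\}$ dense in the essential range and define $g_n$ via the first $k$ with $\|f(\omega)-d_k\|<1/n$.
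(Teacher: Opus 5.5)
Your proof is correct and follows essentially the same route as the paper. Both arguments build the partition $G_k$ from a dense sequence $\{d_k\}$ in a separable subspace containing the essential range of $f$. Both use $\Lambda(\chi_{G_k})=\chi_{G_k'}$ with $\mu(G_k\triangle G_k')=0$ together with multiplicativity to localize, and both use property (d) to get an error bound uniform in $x^\star$, so the exceptional set does not depend on $x^\star$. You merely split the paper's one-line computation on $G_k\cap G_k'$ into two steps: the identity $\hat g_n=g_n$ off a null set for $g_n=\sum_k\chi_{G_k}d_k$, and the everywhere-valid estimate $\|\hat f(\omega)-\hat g_n(\omega)\|\le 1/n$.

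There are two cosmetic points. First, the exceptional set for a countably valued $g$ should be $(\Omega\setminus\bigcup_i G_i)\cup\bigcup_i(G_i\triangle G_i')$, since you need $\omega\in G_i\cap G_i'$ for the same $i$. Second, the claim that $\Lambda$ preserves constants does not follow from (b), which only gives $\Lambda(c)=c\Lambda(1)$. Your argument never uses it, however: $\Lambda(\chi_{\Omega\setminus G_i})\Lambda(\chi_{G_i})=\Lambda(0)=0$ already suffices.
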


\begin{proof}
  For every $\omega \in \Omega$ the map $x^\star \mapsto \Lambda(\langle f, x^\star \rangle)(\omega)$ is linear by (b), and by (d) we have $|\Lambda(\langle f, x^\star \rangle)(\omega)| \leq \|\langle f, x^\star \rangle\|_\infty \leq C\|x^\star\|$. This proves the existence and uniqueness of $\hat{f}(\omega)$.

  Because $f$ is strongly measurable there are a set $N_0 \in \Sigma$, $\mu(N_0) = 0$, and a closed separable subspace $X_0$ of $X$ such that $f(\omega) \in X_0$ for all $\omega \in \Omega \setminus N_0$. Fix $\varepsilon > 0$ and let $\{d_k : k \in \mathds{N}\}$ be a dense subset of $X_0$. Let
  $H_k = \{\omega \in \Omega \setminus N_0 : \|f(\omega) - d_k\| < \varepsilon\}$ and $G_k = H_k \setminus \bigcup \limits_{j < k} H_j$. Then the sets $G_k \in \Sigma$ are pairwise disjoint, $\bigcup \limits_{k=1}^\infty G_k = \Omega \setminus N_0$, and
  \begin{equation} \label{eq5}
    \|f(\omega) - d_k\| < \varepsilon, \quad \omega \in G_k.
  \end{equation}

  By (c) we have $\Lambda(\chi_{G_k})^2 = \Lambda(\chi_{G_k})$, therefore $\Lambda(\chi_{G_k}) = \chi_{G_k^\prime}$ for some $G_k^\prime \in \Sigma$, and by (a), $\mu(G_k \triangle G_k^\prime) = 0$. Let
  $N_\varepsilon = N_0 \cup \bigcup \limits_{k=1}^\infty (G_k \triangle G_k^\prime)$. Then $\mu(N_\varepsilon) = 0$ and every $\omega \in \Omega \setminus N_\varepsilon$ belongs to $G_k \cap G_k^\prime$ for some $k \in \mathds{N}$. Notice that the set $N_\varepsilon$ does not depend on $x^\star$.

  Fix $k \in \mathds{N}$, $\omega \in G_k \cap G_k^\prime$, and $x^\star \in X^\star$. Because $\Lambda(\chi_{G_k})(\omega) = 1$, properties (b) and (c) imply
  \begin{equation*}
    \begin{split}
    \langle \hat{f}(\omega), x^\star \rangle & = \Lambda(\chi_{G_k})(\omega)\,\Lambda(\langle f, x^\star \rangle)(\omega) = \Lambda(\chi_{G_k}\langle f, x^\star \rangle)(\omega) \\
    & = \langle d_k, x^\star \rangle \Lambda(\chi_{G_k})(\omega) + \Lambda(\chi_{G_k}\langle f - d_k, x^\star \rangle)(\omega) \\
    & = \langle d_k, x^\star \rangle + \Lambda(\chi_{G_k}\langle f - d_k, x^\star \rangle)(\omega).
    \end{split}
  \end{equation*}
  By~(\ref{eq5}), $\|\chi_{G_k}\langle f - d_k, x^\star \rangle\|_\infty \leq \varepsilon \|x^\star\|$, and therefore, by (d), $|\langle \hat{f}(\omega) - d_k, x^\star \rangle| \leq \varepsilon\|x^\star\|$. Because $x^\star \in X^\star$ is arbitrary, $\|\hat{f}(\omega) - d_k\|_{X^{\star\star}} \leq \varepsilon$. Combining this with~(\ref{eq5}) we see that
  \begin{equation*}
    \|\hat{f}(\omega) - f(\omega)\| < 2\varepsilon, \quad \omega \in \Omega \setminus N_\varepsilon.
  \end{equation*}
  It remains to put $N = \bigcup \limits_{n=1}^\infty N_{1/n}$.
\end{proof}

\begin{theorem} \label{t2}
  Let $(\Omega, \Sigma, \mu)$ and $E$ be as in Theorem~\ref{t1}, and let $E(X)$ be the corresponding K\"othe--Bochner space. Assume that the Banach space $X$ is complemented in its second dual $X^{\star\star}$, and let $Q$ be a bounded projection of $X^{\star\star}$ onto $X$. Let $T$ be a bounded linear operator on $E(X)$ that commutes with $L^\infty$.

  Then there is a map $A: \Omega \rightarrow L(X)$ such that
  \begin{equation*}
    \|T\| \leq \sup \limits_{\omega \in \Omega} \|A(\omega)\| \leq \|Q\| \, \|T\|,
  \end{equation*}
  and for every $x \in E(X)$ we have $Tx(\omega) =A(\omega)x(\omega)$ a.e. In particular, if $X$ is $1$-complemented in $X^{\star\star}$, then
  $\sup \limits_{\omega \in \Omega} \|A(\omega)\| = \|T\|$.
\end{theorem}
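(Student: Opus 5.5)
First reduce via Lemma~\ref{l0} to the case where $\mu$ is a probability measure and $\chi_\Omega\in E$. The representation obtained for $\tilde T$ transfers back to $T$. The only care needed is that Lemma~\ref{l0} asks for $A$ on a set of full measure, while here we want $A$ on all of $\Omega$; this is harmless, since we may set $A=0$ off $\bar\Omega$. Next fix a lifting $\Lambda$ of $L^\infty(\mu)$.

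For each $x\in X$ the function $T\vec{x}$ lies in $E(X)$, but it need not be bounded, so it cannot be lifted directly. To fix this, use commutation with $L^\infty$ to localize. For $n\in\mathds{N}$ let $G_n=\{\|T\vec x(\cdot)\|\le n\}$; then $\chi_{G_n}T\vec x=T(\chi_{G_n}\vec x)$ is bounded. The cleaner route, which I would take first, is to prove directly that $\|T\vec x(\omega)\|\le\|T\|\,\|x\|$ a.e. This is the same argument as~(\ref{eq3}) in Theorem~\ref{t1}: if the bound failed on a set $F$ of positive measure, then $u=\chi_F\vec x$ would satisfy $\|Tu\|\ge(\|T\|+\varepsilon)\|u\|$, a contradiction. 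So $f_x=T\vec x$ is a strongly measurable function bounded by $\|T\|\|x\|$, and Lemma~\ref{l1} gives $\hat f_x(\omega)\in X^{\star\star}$, defined for every $\omega$, with $\hat f_x=f_x$ a.e.

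Now define $B(\omega)x=\hat f_x(\omega)$. The key point, and the reason for using the lifting, is that $B(\omega)$ is linear in $x$ for \emph{every} $\omega$, not just almost every one. Indeed, $\langle \hat f_x(\omega),x^\star\rangle=\Lambda(\langle T\vec x,x^\star\rangle)(\omega)$, and the class $\langle T\vec x,x^\star\rangle\in L^\infty$ is linear in $x$ because $T$ is linear, while $\Lambda$ is linear on classes. The same property (d) gives $|\langle B(\omega)x,x^\star\rangle|\le\|T\|\|x\|\|x^\star\|$, so $B(\omega)\in L(X,X^{\star\star})$ with $\|B(\omega)\|\le\|T\|$. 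Set $A(\omega)=QB(\omega)\in L(X)$; then $\|A(\omega)\|\le\|Q\|\|T\|$. For each $x$, Lemma~\ref{l1} gives $B(\omega)x=T\vec x(\omega)\in X$ a.e., hence $A(\omega)x=T\vec x(\omega)$ a.e.

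It remains to pass from constant functions to all of $E(X)$, and this is where separability is no longer available; I expect it to be the main obstacle. Fix $x\in E(X)$. Since it is strongly measurable, it takes values a.e. in a separable subspace $X_0$ with a countable dense $\mathds{Q}(i)$-subspace $\{x_i\}$. For each $i$ we have $A(\omega)x_i=T\vec x_i(\omega)$ off a null set $N_i$. Now run the approximation argument of Theorem~\ref{t1} with $Z$ consisting of $\sigma$-valued functions with values in $\{x_i\}$ and supports disjoint from $\bigcup N_i$. Localization with $\chi_{G_i}$ gives $Tz=Az$ a.e. for $z\in Z$, and $x$ is a limit of such $z_n$. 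Passing to an a.e.-convergent subsequence of $z_n$ and $Tz_n$, and using that each $A(\omega)$ is bounded, yields $Tx(\omega)=A(\omega)x(\omega)$ a.e. The countability of $\{x_i\}$ depends on $x$, but this is fine, because $A$ was fixed beforehand for all $\omega$ independently of $x$, which is exactly what the lifting provides.

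Finally, the lower bound $\|T\|\le\sup_\omega\|A(\omega)\|$ follows from the pointwise inequality $\|Tx(\omega)\|\le\|A(\omega)\|\,\|x(\omega)\|$ and the lattice norm of $E$, as at the end of Theorem~\ref{t1}. When $\|Q\|=1$ the two bounds coincide, which gives the last statement.
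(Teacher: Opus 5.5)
Your proposal is correct and follows essentially the paper's route: the a.e.\ bound $\|T\vec x(\omega)\|\le\|T\|\,\|x\|$ via the argument of (\ref{eq3}), a lifting to obtain $B(\omega)\in L(X,X^{\star\star})$ with $\|B(\omega)\|\le\|T\|$ for \emph{every} $\omega$, Lemma~\ref{l1} to identify $B(\omega)x$ with $T\vec x(\omega)$ a.e., $A=QB$, and approximation by $\sigma$-valued functions using that the pointwise operators are uniformly bounded. The differences are only cosmetic. You get $B(\omega)$ directly as $x\mapsto\hat f_x(\omega)$ rather than via the bilinear form $b_{x,x^\star}$ and \cite{Ry}. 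For the density step, you approximate a fixed $x$ by functions with values in a countable dense subset of a separable $X_0$, which is easily done as in the proof of Lemma~\ref{l1} using $\chi_\Omega\in E$, rather than citing \cite[Lemma 3.2]{DKO}.
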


\begin{proof}
  By Lemma~\ref{l0}, as in the proof of Theorem~\ref{t1}, we can assume without loss of generality that $\mu$ is a probability measure and $\chi_\Omega \in E$; this does not change $L^\infty$, the norm of $T$, or the functions $A$ representing $T$. Let $x \in X$ and $x^\star \in X^\star$. As in the proof of Theorem~\ref{t1} we consider $\vec{x} \in E(X)$, $\vec{x}(\omega) = x$, $\omega \in \Omega$. The equality
  $a_{x,x^\star}(\omega) = \langle T\vec{x}(\omega), x^\star\rangle$ defines $a_{x,x^\star}(\omega)$ a.e. on $\Omega$. The argument used to prove~(\ref{eq3}) shows that
  \begin{equation}\label{eq4}
     |a_{x,x^\star}(\omega)| \leq \|(T\vec{x})(\omega)\| \, \|x^\star\|
   \leq \|T\| \,\|x\| \, \|x^\star\|
  \end{equation}
  for almost all $\omega \in \Omega$. Thus $a_{x,x^\star} \in L^\infty$.

  Let us fix a lifting $\Lambda: L^\infty \rightarrow \mathcal{L}^\infty$ and let $b_{x,x^\star} = \Lambda(a_{x,x^\star})$. It follows from~(\ref{eq4}) and properties (b) and (d) of the lifting $\Lambda$ that for every
  $\omega \in \Omega$ the map $(x,x^\star) \mapsto b_{x,x^\star}(\omega)$ is a bounded bilinear form on $X \times X^\star$. Therefore (see~\cite[p.~24]{Ry}) for every $\omega \in \Omega$ there is a bounded linear operator $B(\omega): X \rightarrow X^{\star\star}$ such that $b_{x,x^\star}(\omega) = \langle B(\omega)x,x^\star\rangle$, $x \in X$, $x^\star \in X^\star$. Notice that $\|B(\omega)\| \leq \|T\|$ for every $\omega \in \Omega$.

  By (\ref{eq4}) the function $T\vec{x}$ is essentially bounded, and by Lemma~\ref{l1}, $T\vec{x}(\omega) = B(\omega)x$ for almost all $\omega \in \Omega$.

  Let $z$ be a $\sigma$-valued function, $\chi_{G_i}z = \chi_{G_i}\vec{z}_i$ (Definition~\ref{d1}). Then $\chi_{G_i}Tz = T\chi_{G_i}\vec{z}_i = \chi_{G_i}T\vec{z}_i$, and therefore $Tz(\omega) = B(\omega)z_i = B(\omega)z(\omega)$ for almost all $\omega \in G_i$. Since there are at most countably many sets $G_i$, $Tz(\omega) = B(\omega)z(\omega)$ almost everywhere. Now let $x \in E(X)$. By \cite[Lemma 3.2]{DKO} there are $\sigma$-valued functions $z_n$ such that $z_n \to x$ in $E(X)$. As in the proof of Theorem~\ref{t1}, passing to a subsequence we may assume that $z_n(\omega) \to x(\omega)$ and $Tz_n(\omega) \to Tx(\omega)$ in norm for almost all $\omega$. Since $\|B(\omega)\| \leq \|T\|$ for \emph{every} $\omega$, we have $B(\omega)z_n(\omega) \to B(\omega)x(\omega)$, and hence $Tx(\omega) = B(\omega)x(\omega)$ for almost all $\omega$. Note that no separability of $X$ is needed here, because the convergence is in norm and the operators $B(\omega)$ are uniformly bounded.

  Put $A(\omega) = QB(\omega)$, $\omega \in \Omega$. Since $Tx(\omega) \in X$, we have $A(\omega)x(\omega) = QTx(\omega) = Tx(\omega)$ for almost all $\omega$, and $\|A(\omega)\| \leq \|Q\|\,\|T\|$. Finally, $\|T\| \leq \sup_\omega\|A(\omega)\|$ follows from the definition of the norm in $E(X)$.
\end{proof}

Every dual Banach space is complemented in its second dual by a projection
of norm one. Therefore Theorem~\ref{t2} applies, with the sharp norm
estimate, whenever $X$ is a dual space. We state this special case
explicitly.

\begin{corollary} \label{c4}
  Let $(\Omega, \Sigma, \mu)$ and $E$ be as in Theorem~\ref{t1}, let
  $X = Y^\star$ for some Banach space $Y$, and let $T$ be a bounded linear
  operator on $E(X)$ that commutes with $L^\infty$. Then there is a map
  $A : \Omega \rightarrow L(X)$ such that
  $\sup \limits_{\omega \in \Omega} \|A(\omega)\| = \|T\|$ and for every
  $x \in E(X)$ we have $Tx(\omega) = A(\omega)x(\omega)$ for almost all
  $\omega \in \Omega$. In particular, this holds for every reflexive
  Banach space $X$.
\end{corollary}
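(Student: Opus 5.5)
The corollary will follow from Theorem~\ref{t2} once we produce a norm-one projection of $X^{\star\star}$ onto $X$ when $X = Y^\star$. Let $\iota_Y : Y \to Y^{\star\star}$ and $\iota_X : X \to X^{\star\star}$ be the canonical embeddings. We define $Q = (\iota_Y)^\star : Y^{\star\star\star} = X^{\star\star} \to Y^\star = X$, so that $Q$ is just restriction of a functional on $Y^{\star\star}$ to $\iota_Y(Y)$. Then $\|Q\| \leq \|\iota_Y\| = 1$. Next we check that $Q\iota_X = \mathrm{id}_X$. For $x \in X = Y^\star$ and $y \in Y$,
\begin{equation*}
\langle y, Q\iota_X x \rangle = \langle \iota_Y y, \iota_X x \rangle = \langle x, \iota_Y y \rangle = \langle y, x \rangle .
\end{equation*}
Hence, after identifying $X$ with $\iota_X(X)$, $Q$ is a projection of $X^{\star\star}$ onto $X$ with $\|Q\| = 1$ ($X \neq \{0\}$; the case $X = \{0\}$ is trivial).

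Now we apply Theorem~\ref{t2} with this $Q$. It gives a map $A : \Omega \to L(X)$ with $Tx(\omega) = A(\omega)x(\omega)$ a.e.\ for every $x \in E(X)$ and $\|T\| \leq \sup_\omega \|A(\omega)\| \leq \|Q\|\,\|T\| = \|T\|$, which is the required equality. For reflexive $X$ we write $X = (X^\star)^\star$ via the canonical isomorphism (or simply take $Q = \iota_X^{-1}$, which is an isometry), and the previous case applies.

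There is no real obstacle. The only point needing care is that the identity $\langle \iota_Y y, \iota_X x\rangle = \langle x, \iota_Y y\rangle$ is used with the correct pairing conventions, so that $Q$ really restricts to the identity on the canonical copy of $X$.
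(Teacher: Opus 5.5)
Your proposal is correct and follows essentially the same route as the paper: you take $Q = \iota_Y^\star$, note that it is a norm-one projection of $X^{\star\star}$ onto $X$, and apply Theorem~\ref{t2} with $\|Q\| = 1$. For the reflexive case you use $X = (X^\star)^\star$, and your pairing check $Q\iota_X = \mathrm{id}_X$ supplies the verification that the paper leaves implicit.
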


\begin{proof}
  Let $j_Y : Y \rightarrow Y^{\star\star}$ be the canonical embedding.
  Then $Q = j_Y^\star : Y^{\star\star\star} \rightarrow Y^\star$ is a
  projection of $X^{\star\star}$ onto $X$ and $\|Q\| = 1$. It remains to
  apply Theorem~\ref{t2}. If $X$ is reflexive then $X = (X^\star)^\star$.
\end{proof}

\begin{remark} \label{r1}
  The proof of Theorem~\ref{t2} shows that if $E(X)$ is an arbitrary K\"othe--Bochner space and $T$ is a bounded linear operator on $E(X)$ that commutes with $L^\infty$ then there is a map $A: \Omega \rightarrow L(X,X^{\star\star})$ such that $\sup \limits_{\omega \in \Omega} \|A(\omega)\| = \|T\|$ and for any $x \in E(X)$ we have $Tx(\omega) = A(\omega)x(\omega)$ a.e.
\end{remark}

\begin{remark} \label{r2}
The restriction to $L(X,X^{\star\star})$ in Remark~\ref{r1} has no counterpart in the topological setting. Let $K$ be a compact Hausdorff space and let $T$ be a bounded linear operator on $C(K,X)$ that commutes with the operators of multiplication by functions from $C(K)$. Then there is a strongly continuous map $A : K \to L(X)$ such that $Tf(k) = A(k)f(k)$, $f \in C(K,X)$, $k \in K$, and $\sup_{k \in K} \|A(k)\| = \|T\|$; conversely, every such map defines an operator commuting with $C(K)$ (see \cite[Proposition~4.7(ii), p.~84]{Be}, applied to the function module $C(K,X)$, \cite[Example~1, pp.~77--78]{Be}). The same reference \cite[Example~2, p.~78]{Be} covers $C_0(L,X)$ for a locally compact Hausdorff space $L$; in that case, in the converse statement, the strongly continuous map $A$ has to be assumed norm bounded, since the uniform boundedness principle guarantees this only when $L$ is compact. (For the centralizer of $C_0(L,X)$ see \cite[Proposition~10.3, p.~169]{Be}.) Note also that the supremum $\sup_k \|A(k)\|$ need not be attained, because $k \mapsto \|A(k)\|$ is only lower semicontinuous.

The reason why $A(k) \in L(X)$ automatically is that in $C(K,X)$ one can evaluate at every point: if $f(k_0) = 0$ then $(Tf)(k_0) = 0$, and therefore $A(k)x = (T\vec{x})(k)$ is a vector of $X$. If $K$ is the Stone space of $L^\infty(\Omega, \Sigma, \mu)$, then $C(K,X)$ can be identified only with the subspace of $L^\infty(\Omega, \Sigma, \mu; X)$ consisting of the functions whose essential range is relatively compact. For example, if $\{G_i\}$ are pairwise disjoint sets of positive measure and $\{e_i\}$ is an orthonormal sequence in $\ell^2$, then $\sum_i \chi_{G_i} e_i$ belongs to $L^\infty(\Omega, \Sigma, \mu; \ell^2)$ but not to $C(K, \ell^2)$. In $E(X)$ only equivalence classes of functions are available, and the lifting used in the proof of Theorem~\ref{t2} produces, in general, only operators $A(\omega) : X \to X^{\star\star}$.
\end{remark}

\begin{remark} \label{ratomic}
If the measure $\mu$ is purely atomic, then the conclusions of Theorems~\ref{t1} and~\ref{t2} hold for an arbitrary Banach space $X$ and with $\sup_\omega \|A(\omega)\| = \|T\|$. Indeed, every element of $E(X)$ is constant on each atom, so for an atom $G$ of finite measure the band $\chi_G E(X)$ is isomorphic to $X$. If $T$ commutes with $L^\infty$, it leaves $\chi_G E(X)$ invariant and acts there as an operator $A_G \in L(X)$ with $\|A_G\| \leq \|T\|$. Consequently, in this case Corollaries~\ref{c1} and~\ref{c1a} and Theorem~\ref{t4} also hold for every Banach space $X$ (in Theorem~\ref{t4} for every $1 \leq p < \infty$ and without the assumption that the weight is norm-measurable), and the operators $p(\omega)$ are projections for every $\omega$. Thus the difficulties considered in this paper come entirely from the nonatomic part of $\mu$.
\end{remark}

\section{Applications to weighted composition operators on K\"othe--Bochner spaces. Spectral projections} \label{s3}

  \begin{definition} \label{dergodic}
    Let $(\Omega, \Sigma, \mu)$ be a measure space, and let $\varphi : \Omega \rightarrow \Omega$ be a measurable map of $\Omega$ onto itself. The measure $\mu$ is called ergodic with respect to $\varphi$ if it follows from
  $G \in \Sigma$ and  $\mu(\varphi^{-1}(G) \triangle G)=0$ (where $\triangle$ denotes the symmetric difference) that either $\mu(\Omega \setminus G) =0$ or $\mu(G) = 0$.
  \end{definition}

  \begin{definition} \label{daperiodic}
  Let $(\Omega, \Sigma, \mu)$ be a complete measure space with a $\sigma$-finite measure $\mu$, and let $\varphi : \Omega \to \Omega$ be an invertible map such that $\varphi$ and $\varphi^{-1}$ are measurable and \emph{nonsingular}, i.e., $\mu(\varphi(G)) = 0$ if and only if $\mu(G) = 0$, $G \in \Sigma$. A set $F \in \Sigma$ is called \emph{$N$-separated} if $\mu(F \cap \varphi^k(F)) = 0$ for $1 \leq k \leq N$; then also $\mu(\varphi^i(F) \cap \varphi^j(F)) = 0$ for $i \neq j$, $|i - j| \leq N$. The map $\varphi$ is called \emph{aperiodic} if for every $N \in \mathds{N}$ every set of positive measure contains an $N$-separated subset of positive measure.
  \end{definition}

  Since a subset of an $N$-separated set is $N$-separated and $\mu$ is $\sigma$-finite, the $N$-separated subset can be chosen of finite positive measure. If $\varphi$ is aperiodic, then every measurable subset of $\{\omega : \varphi^k(\omega) = \omega\}$, $k \neq 0$, has measure zero, because every such subset $F$ satisfies $\varphi^k(F) = F$. The converse is not true in general (see Remark~\ref{rperiodic}).

  \begin{remark} \label{rfrolik}
  Let $\varphi$ be as in Definition~\ref{daperiodic}. Then $f \mapsto f \circ \varphi$ is an automorphism of the algebra $L^\infty(\mu)$. Let $\mathcal{K}$ be the Gelfand compact of $L^\infty(\mu)$, so that $L^\infty(\mu) \cong C(\mathcal{K})$, and let $\psi$ be the unique homeomorphism of $\mathcal{K}$ onto itself such that $\widehat{f \circ \varphi} = \hat{f} \circ \psi$, where $\hat{f}$ is the Gelfand transform of $f$. Clopen subsets of $\mathcal{K}$ correspond to sets $G \in \Sigma$ (up to sets of measure zero) via $\hat{\chi}_G = \chi_{\hat{G}}$, and $\hat{G} \neq \emptyset$ if and only if $\mu(G) > 0$. The following conditions are equivalent:
  \begin{enumerate}[(i)]
  \item $\varphi$ is aperiodic;
  \item $\psi$ has no periodic points;
  \item there are no $k \in \mathds{N}$ and $G \in \Sigma$, $\mu(G) > 0$, such that $\mu(H \triangle \varphi^k(H)) = 0$ for every $H \in \Sigma$, $H \subset G$.
  \end{enumerate}
  Indeed, if $G$ is as in the negation of (iii), then every $H \subset G$ of positive measure satisfies $\mu(H \cap \varphi^k(H)) = \mu(H) > 0$, so $\varphi$ is not aperiodic; thus (i) implies (iii). If $\psi^k(\kappa) = \kappa$ for some $\kappa \in \mathcal{K}$, then by Frol\'{\i}k's theorem \cite{Fr} (since $\mu$ is $\sigma$-finite, $L^\infty(\mu)$ is a complete lattice and $\mathcal{K}$ is extremally disconnected) the set of fixed points of $\psi^k$ is a nonempty clopen set $\hat{G}$. Then $\chi_G(f \circ \varphi^k) = \chi_Gf$ for all $f \in L^\infty(\mu)$, and since $\psi^k(\hat{G}) = \hat{G}$, also $\varphi^k(G) = G$ up to a set of measure zero. For $f = \chi_H$, $H \subset G$, we get $\mu(H \triangle \varphi^{-k}(H)) = 0$, which contradicts (iii); thus (iii) implies (ii). Finally, assume (ii), let $N \in \mathds{N}$ and $\mu(G) > 0$, and choose $\kappa \in \hat{G}$. The points $\kappa, \psi(\kappa), \dots, \psi^N(\kappa)$ are pairwise distinct, and since the clopen sets form a base of the topology of $\mathcal{K}$, there is a nonempty clopen set $\hat{F} \subset \hat{G}$ containing $\kappa$ such that the sets $\hat{F}, \psi(\hat{F}), \dots, \psi^N(\hat{F})$ are pairwise disjoint. Then $F \subset G$ is an $N$-separated set of positive measure; thus (ii) implies (i).
  \end{remark}

  We say that a set $\sigma'$ is a \emph{clopen subset} of $\sigma(T)$ if it is both closed and open in $\sigma(T)$; these are exactly the subsets of $\sigma(T)$ that have a spectral (Riesz) projection.

 \begin{corollary} \label{c1}
     Let $(\Omega, \Sigma, \mu)$ be a complete measure space with a $\sigma$-finite measure $\mu$. Let $E$ be as in Theorem~\ref{t1}, let $X$ be a separable Banach space, and let $E(X)$ be the corresponding K\"othe--Bochner space. Let $\varphi : \Omega \rightarrow \Omega$ be an invertible measurable nonsingular map, and assume that $\varphi$ is aperiodic (Definition~\ref{daperiodic}).

      Let $a: \Omega \rightarrow L(X)$ be such that the formula
      $Tx(\omega) = a(\omega)x(\varphi(\omega))$, $x \in E(X)$, $\omega \in \Omega$, defines a bounded invertible operator on $E(X)$.

     If $\sigma'$ is a nonempty clopen subset of $\sigma(T)$ (for example, a connected component of $\sigma(T)$ that is open in $\sigma(T)$) and $P$ is the corresponding spectral projection, then there are an $\bar{\Omega} \in \Sigma$ and $p: \bar{\Omega} \rightarrow L(X)$ such that $\mu(\Omega \setminus \bar{\Omega}) =0$, $p(\omega)$ is a bounded projection on $X$ for every $\omega \in \bar{\Omega}$, $\sup \limits_{\omega \in \bar{\Omega}} \|p(\omega)\| = \|P\|$, and, for every $x \in E(X)$, $Px(\omega) = p(\omega)x(\omega)$ for almost all $\omega \in \bar{\Omega}$.
    \end{corollary}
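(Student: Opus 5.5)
The plan is to show that $P$ commutes with $L^\infty$ and then apply Theorem~\ref{t1}. The idempotence of the pointwise operators $p(\omega)$ will follow from $P^2=P$ via a density argument.

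\textbf{Step 1: commutation relation.} For $f\in L^\infty(\mu)$ we have
\begin{equation*}
Tf = (f\circ\varphi^{-1})\,T,
\end{equation*}
since $a(\omega)f(\varphi(\omega))x(\varphi(\omega))$ equals $(f\circ\varphi)(\omega)\,(Tx)(\omega)$. Written differently, $fT=T(f\circ\varphi^{-1})$, and nonsingularity makes $f\mapsto f\circ\varphi^{\pm1}$ an automorphism of $L^\infty$. In the Gelfand picture of Remark~\ref{rfrolik}, $T$ is a ``weighted shift'' over the homeomorphism $\psi$ of $\mathcal K$.

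\textbf{Step 2 (main obstacle): $P$ commutes with $L^\infty$.} I would first prove that $\sigma(T)$ is rotation invariant, i.e.\ that $\lambda T$ is similar to $T$ for $|\lambda|=1$, in a way compatible with the $L^\infty$-module structure. A naive multiplier $U$ with $U^{-1}TU=\lambda T$ would need a function $g$ with $g\circ\varphi=\lambda g$, which need not exist. Aperiodicity supplies only approximate versions.

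The plan is to use Rokhlin-type towers. Given $N$ and an $N$-separated set $F$ of positive measure, define $g$ on the tower $F,\varphi(F),\dots,\varphi^N(F)$ by $g=\lambda^{k}$ on $\varphi^k(F)$, with a smooth cutoff at the ends. Then the commutator $\|(g\circ\varphi)\,\cdot\,-\lambda g\,\cdot\|$ restricted to the middle of the tower is small.

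The cleaner route I would actually try follows Antonevich \cite{An}. For $f\in L^\infty$, consider $[P,f]$. Since $P$ is a norm limit of polynomials in $T$ and $T^{-1}$ via the Riesz integral, it suffices to show that $P$ commutes with every $\chi_G$ for $G$ in a generating family. By a maximality (exhaustion) argument, it is enough to show that each set of positive measure contains a subset $F$ with $\chi_F P\chi_{\Omega\setminus F}$ and $\chi_{\Omega\setminus F}P\chi_F$ vanishing. I expect to get this from aperiodicity: on an $N$-separated $F$, the operators $\chi_F T^k\chi_F$ vanish for $1\le|k|\le N$. Approximating the resolvent on the contour by Laurent polynomials then makes $\chi_F(P-c)\chi_F$ negligible, and the rotation argument above identifies the off-diagonal parts.

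This is the delicate step, and I would lean on the standard theorem (\cite[Theorem 9.5]{An}, \cite{AL}) that for aperiodic $\varphi$ the spectral projections of weighted shifts lie in the commutant of $L^\infty$.

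\textbf{Step 3: representation and idempotence.} By Step 2 and Theorem~\ref{t1}, there exist $\bar\Omega$ and $p:\bar\Omega\to L(X)$ with $Px(\omega)=p(\omega)x(\omega)$ a.e.\ and $\sup\|p(\omega)\|=\|P\|$.

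From $P^2=P$, applied to the countably many functions $\vec{x}_i$ with $x_i\in Y$, we get
\begin{equation*}
p(\omega)^2x_i=p(\omega)x_i \quad\text{for } \omega \text{ outside a null set}.
\end{equation*}
Here we use that $P\vec x_i(\omega)=p(\omega)x_i$ and that $P(P\vec x_i)(\omega)=p(\omega)p(\omega)x_i$ a.e. Removing the union of these countably many null sets from $\bar\Omega$ and using density of $Y$ and boundedness of $p(\omega)$, we conclude that $p(\omega)^2=p(\omega)$ on the smaller full-measure set. Removing a null set does not increase the supremum. Since the inequality $\|P\|\le\sup\|p(\omega)\|$ still holds by the norm argument of Theorem~\ref{t1}, the supremum remains equal to $\|P\|$.
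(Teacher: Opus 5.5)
\textbf{The gap is Step 2.} Step 2 is the only part of the corollary not already contained in Theorem~\ref{t1}, and you never prove it: $P$ is not shown to commute with $L^\infty$.

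Your sketch rests on approximating $P$, or the resolvent on the contour, by Laurent polynomials in $T$. This is false in exactly the case at hand. The Riesz integral expresses $P$ through resolvents $(\lambda-T)^{-1}$ with $\lambda$ in a gap of $\sigma(T)$, not through polynomials in $T^{\pm1}$. Suppose $L_n(T)\to P$ in norm for Laurent polynomials $L_n$. The spectral mapping theorem then gives $\max_{\sigma(T)}|L_n-\chi_{\sigma'}|\le\|L_n(T)-P\|\to0$. Once $\sigma(T)$ is rotation invariant and $\emptyset\neq\sigma'\neq\sigma(T)$, it contains a full circle $\{|z|=t_1\}\subset\sigma'$ and a full circle $\{|z|=t_2\}\subset\sigma(T)\setminus\sigma'$. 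The constant coefficient of $L_n$ equals its mean over every circle centered at $0$, so it would have to be close to both $1$ and $0$, which is impossible.

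So combining such approximations with $\chi_FT^k\chi_F=0$ on $N$-separated sets $F$ cannot work. In the paper, $N$-separated sets serve the converse purpose (Lemmas~\ref{l33} and~\ref{l34}): an element of $\mathcal{B}$ that \emph{already} commutes with $L^\infty$ is a multiplier. Your fallback citation does not help either. \cite[Theorem 9.5]{An} concerns the number of connected components of $\sigma(aU)$ in a $C^\star$-algebra on a Hilbert space. It does not assert that spectral projections commute with the coefficients, and it does not cover $E(X)$ for an arbitrary K\"othe space $E$ and Banach space $X$.

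\textbf{The missing idea is elementary and does not use aperiodicity.} Let $\{|\lambda|=r\}\cap\sigma(T)=\emptyset$ and let $\Pi_r$ be the spectral projection for $\sigma(T)\cap\{|\lambda|<r\}$. The spectral radius bounds for $T$ on $\Pi_rE(X)$ and for $T^{-1}$ on $(I-\Pi_r)E(X)$ give $\Pi_rE(X)=\{x: r^{-n}\|T^nx\|\to0\}$ and $(I-\Pi_r)E(X)=\{x: r^n\|T^{-n}x\|\to0\}$. Since $T^nf=(f\circ\varphi^n)T^n$ for $n\in\mathds{Z}$, we have $\|T^n(fx)\|\le\|f\|_\infty\|T^nx\|$. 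Hence both subspaces are invariant under multiplication by $f\in L^\infty(\mu)$, and $\Pi_rf=f\Pi_r$.

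Aperiodicity is needed only for the rotation invariance of $\sigma(T)$. The paper takes this from \cite[Corollary 12.30]{AAK} via Remark~\ref{rfrolik}; your tower idea is a possible route, but you did not carry it out. Rotation invariance is used only to write $\sigma'=\{\lambda\in\sigma(T):|\lambda|\in J\}$ with $J$ a finite union of intervals. Then $P=\sum_i(\Pi_{t_i}-\Pi_{s_i})$ commutes with $L^\infty$.

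After that, your Step 3 is the paper's argument, with two small corrections. First, $\vec x_i$ need not belong to $E(X)$ when $\chi_\Omega\notin E$, so apply $P^2=P$ to $\chi_G\vec x_i$ with $\mu(G)<\infty$. Second, the displayed identity in Step 1 should read $Tf=(f\circ\varphi)T$, which is what your justification actually shows.
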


    \begin{proof}
      Since $\varphi$ is nonsingular, $f \circ \varphi^n \in L^\infty(\mu)$ for $f \in L^\infty(\mu)$, and $Tf = (f \circ \varphi)T$; hence $T^nf = (f \circ \varphi^n)T^n$, $n \in \mathds{Z}$.

      \emph{Step 1: circles.} Let $r > 0$ be such that the circle $\{|\lambda| = r\}$ does not meet $\sigma(T)$, let $\Pi_r$ be the spectral projection of $T$ corresponding to $\sigma(T) \cap \{|\lambda| < r\}$, and let $E_1 = \Pi_rE(X)$, $E_2 = (I - \Pi_r)E(X)$. We claim that
      \begin{align*} E_1 &= \{x \in E(X) : r^{-n}\|T^nx\| \to 0\}, \\ E_2 &= \{x \in E(X) : r^{n}\|T^{-n}x\| \to 0\}. \end{align*}
      Indeed, the spectral radius of $T|_{E_1}$ is less than $r$, so $r^{-n}\|T^nx\| \to 0$ for $x \in E_1$. Conversely, $T|_{E_2}$ is invertible and the spectral radius of its inverse is less than $1/r$, so there are $C > 0$ and $s > r$ such that $\|T^ny\| \geq Cs^n\|y\|$, $y \in E_2$, $n \in \mathds{N}$. If $r^{-n}\|T^nx\| \to 0$ and $y = (I - \Pi_r)x$, then $Cs^n\|y\| \leq \|T^ny\| = \|(I - \Pi_r)T^nx\| \leq \|I - \Pi_r\|\,\|T^nx\|$, hence $(s/r)^n\|y\| \to 0$ and $y = 0$. The second equality is proved in the same way. Now let $f \in L^\infty(\mu)$. Since $\|T^nfx\| = \|(f \circ \varphi^n)T^nx\| \leq \|f\|_\infty\|T^nx\|$ for $n \in \mathds{Z}$, we have $fE_1 \subset E_1$ and $fE_2 \subset E_2$. Therefore $\Pi_rf\Pi_r = f\Pi_r$ and $(I-\Pi_r)f(I-\Pi_r) = f(I-\Pi_r)$, so that $\Pi_rf = f\Pi_r$.

      \emph{Step 2: arbitrary clopen subsets.} We consider $E(X)$ as a Banach $C(\mathcal{K})$-module, where $\mathcal{K}$ is the Gelfand compact of $L^\infty(\mu)$. Since $Tf = (f \circ \varphi)T$, $T$ is a weighted composition operator on this module with respect to the homeomorphism $\psi$ of Remark~\ref{rfrolik}. Since $\varphi$ is aperiodic, $\psi$ has no periodic points (Remark~\ref{rfrolik}), and by~\cite[Corollary 12.30]{AAK} the spectrum $\sigma(T)$ is rotation invariant. Since $T$ is invertible, the set $R = \{|\lambda| : \lambda \in \sigma(T)\}$ is a compact subset of $(0, \infty)$. Every circle $\{|\lambda| = t\}$, $t \in R$, is a connected subset of $\sigma(T)$, and therefore it is contained either in $\sigma'$ or in $\sigma(T) \setminus \sigma'$. Thus $\sigma' = \{\lambda \in \sigma(T) : |\lambda| \in J\}$, where $J$ and $R \setminus J$ are disjoint compact sets. Consequently there are $0 < s_1 < t_1 < s_2 < \dots < t_k$ such that $s_i, t_i \notin R$ and $J = R \cap \bigcup_{i=1}^k [s_i, t_i]$, and
      \[ P = \sum_{i=1}^k (\Pi_{t_i} - \Pi_{s_i}). \]
      By Step 1, $Pf = fP$ for every $f \in L^\infty(\mu)$, and by Theorem~\ref{t1} there is $p : \bar{\Omega} \to L(X)$ with $\|p(\omega)\| \leq \|P\|$ such that $Px(\omega) = p(\omega)x(\omega)$ a.e. for every $x \in E(X)$; clearly also $\|P\| \leq \sup_\omega \|p(\omega)\|$. Applying $P^2 = P$ to the functions $\chi_G\vec{x}_i$, where $\{x_i\}$ is a countable dense subset of $X$ and $\mu(G) < \infty$, and removing from $\bar{\Omega}$ a set of measure zero, we obtain $p(\omega)^2 = p(\omega)$ for all $\omega \in \bar{\Omega}$.
    \end{proof}

    \begin{corollary} \label{c1a}
      Assume the conditions of Corollary~\ref{c1}, except that instead of separability we assume that $X$ is complemented in $X^{\star\star}$, and let $Q$ be a bounded projection of $X^{\star\star}$ onto $X$. If $\sigma'$ is a nonempty clopen subset of $\sigma(T)$ and $P$ is the corresponding spectral projection, then there is a map $p : \Omega \rightarrow L(X)$ such that
      \begin{enumerate}[(a)]
        \item $\|P\| \leq \sup \limits_{\omega \in \Omega} \|p(\omega)\| \leq \|Q\|\,\|P\|$;
        \item for every $x \in E(X)$ we have $Px(\omega) = p(\omega)x(\omega)$ for almost all $\omega \in \Omega$;
        \item for every $x \in E(X)$ we have $p(\omega)^2x(\omega) = p(\omega)x(\omega)$ for almost all $\omega \in \Omega$; in particular, for every $y \in X$ we have $p(\omega)^2y = p(\omega)y$ for almost all $\omega \in \Omega$.
      \end{enumerate}
      If $X$ is $1$-complemented in $X^{\star\star}$, in particular if $X$ is a dual space, then $\sup \limits_{\omega \in \Omega} \|p(\omega)\| = \|P\|$.
    \end{corollary}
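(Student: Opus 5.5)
The plan is to repeat the proof of Corollary~\ref{c1} verbatim up to the point where Theorem~\ref{t1} is invoked, and then to invoke Theorem~\ref{t2} instead. Steps~1 and~2 of that proof never use separability of $X$. Step~1 shows that every $\Pi_r$ commutes with $L^\infty(\mu)$; this uses only $Tf=(f\circ\varphi)T$ and the description of $\Pi_rE(X)$ and $(I-\Pi_r)E(X)$ through growth rates. Step~2 uses the rotation invariance of $\sigma(T)$ from \cite[Corollary 12.30]{AAK} together with aperiodicity (Remark~\ref{rfrolik}) to write $P=\sum_{i}(\Pi_{t_i}-\Pi_{s_i})$. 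Hence $P$ commutes with $L^\infty(\mu)$.

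Theorem~\ref{t2} then gives a map $p:\Omega\to L(X)$ with $\|P\|\le\sup_\omega\|p(\omega)\|\le\|Q\|\,\|P\|$ and $Px(\omega)=p(\omega)x(\omega)$ a.e. for every $x\in E(X)$. This proves (a) and (b). The final assertion about $1$-complemented spaces, and in particular dual spaces by Corollary~\ref{c4}, is immediate from (a) with $\|Q\|=1$.

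For (c), take $x\in E(X)$. Then $Px\in E(X)$, so (b) applied to $Px$ gives $P(Px)(\omega)=p(\omega)(Px)(\omega)=p(\omega)^2x(\omega)$ a.e. Combining this with $P^2x=Px$ and (b) once more yields $p(\omega)^2x(\omega)=p(\omega)x(\omega)$ a.e.

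The special case $y\in X$ needs the constant function $\vec y$, which may not belong to $E(X)$ when $\mu(\Omega)=\infty$. To handle this, cover $\Omega$ by countably many sets $\Omega_n$ of finite measure and apply the identity to $\chi_{\Omega_n}\vec y\in E(X)$. On $\Omega_n$ this function equals $y$, so $p(\omega)^2y=p(\omega)y$ a.e. on each $\Omega_n$, and hence a.e. on $\Omega$.

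The main obstacle is that we cannot conclude, as in Corollary~\ref{c1}, that $p(\omega)$ is a projection for every $\omega$ outside a single null set. The exceptional null set in (c) depends on $y$, and with $X$ nonseparable there is no countable dense set of vectors to take a union over. For this reason (c) is stated only in this weaker form, and no additional argument is needed.
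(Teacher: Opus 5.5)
Your proposal is correct and follows essentially the same route as the paper. The paper also notes that the proof of Corollary~\ref{c1} shows $P$ commutes with $L^\infty(\mu)$ and then applies Theorem~\ref{t2}. It obtains (c) by applying (b) to $x$ and to $Px$, and handles the $\sigma$-finite case with $\chi_{G_n}\vec y$ exactly as you do.
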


    \begin{proof}
      The proof of Corollary~\ref{c1} shows that $P$ commutes with the operators of multiplication by functions from $L^\infty(\Omega, \Sigma, \mu)$. Therefore (a) and (b) follow from Theorem~\ref{t2}, and the last statement follows from (a) with $\|Q\| = 1$ (for dual spaces see Corollary~\ref{c4}). Let $x \in E(X)$. Applying (b) to $x$ and to $Px$ we obtain $p(\omega)x(\omega) = Px(\omega) = P(Px)(\omega) = p(\omega)(Px)(\omega) = p(\omega)^2x(\omega)$ for almost all $\omega \in \Omega$. To obtain the last statement of (c), apply this to the constant function $\vec{y}$, which belongs to $E(X)$ when $\mu$ is finite; in the $\sigma$-finite case apply it to $\chi_{G_n}\vec{y}$ for a sequence of sets $G_n$ of finite measure with $\bigcup_n G_n = \Omega$.
    \end{proof}

    \begin{remark} \label{rperiodic}
      The aperiodicity assumption in Corollary~\ref{c1} cannot be omitted. Let $\Omega = \{1,2\}$, $\Sigma = 2^{\Omega}$, $\mu(\{1\}) = \mu(\{2\}) = 1$, let $\varphi(1) = 2$, $\varphi(2) = 1$, and let $E = L^2(\mu)$, $X = \mathds{C}$, $a \equiv 1$. Then $\mu$ is ergodic with respect to $\varphi$, and $E(X)$ can be identified with $\mathds{C}^2$, while $T(f_1, f_2) = (f_2, f_1)$. Thus $\sigma(T) = \{1, -1\}$ has two connected components although $\dim X = 1$, and the spectral projection $P = \frac{1}{2}\begin{pmatrix} 1 & 1 \\ 1 & 1\end{pmatrix}$ corresponding to $\{1\}$ does not commute with $L^\infty(\mu)$. Hence the conclusions of Corollaries~\ref{c1}, \ref{c1a}, \ref{c2} and \ref{c3} fail.

      The aperiodicity in Definition~\ref{daperiodic} is a property of the action of $\varphi$ on the measure algebra, and it cannot be replaced by the pointwise condition $\mu(\{\omega \in \Omega : \varphi^k(\omega) = \omega\}) = 0$, $k \in \mathds{N}$. Indeed, let $\Omega = \mathds{Z} \times \{0,1\}$, $\Omega_i = \mathds{Z} \times \{i\}$, $i = 0, 1$, $\Sigma = \{\emptyset, \Omega_0, \Omega_1, \Omega\}$, $\mu(\Omega_0) = \mu(\Omega_1) = 1$, and $\varphi(n,i) = (n+1, 1-i)$. Then $\varphi$ has no periodic points at all, and $\mu$ is ergodic with respect to $\varphi$. However, $\varphi(\Omega_0) = \Omega_1$ and $\varphi(\Omega_1) = \Omega_0$, so that $\varphi^2(\Omega_0) = \Omega_0$ and $\varphi$ is not aperiodic. Every measurable function is constant on $\Omega_0$ and on $\Omega_1$, and for $E = L^2(\mu)$, $X = \mathds{C}$, $a \equiv 1$ we obtain the same operator $T(f_1, f_2) = (f_2, f_1)$ on $\mathds{C}^2$ as in the previous example. Thus the conclusions of Corollaries~\ref{c1}, \ref{c1a}, \ref{c2} and \ref{c3} fail, although $\varphi$ has no periodic points.

      Note that if $\mu$ is ergodic and has no atoms, then $\varphi$ is automatically aperiodic. Indeed, otherwise by Remark~\ref{rfrolik} there are $k \in \mathds{N}$ and $G$, $\mu(G) > 0$, such that $\varphi^k(H) = H$ (up to sets of measure zero) for every measurable $H \subset G$. The set $\bigcup_{j=0}^{k-1}\varphi^j(G)$ is invariant, and by ergodicity it has a complement of measure zero; hence $\varphi^k(H) = H$ up to sets of measure zero for every $H \in \Sigma$. Then $\nu = \sum_{j=0}^{k-1}\mu \circ \varphi^j$ is a $\varphi$-invariant measure equivalent to $\mu$, and for a set $G$ with $0 < \nu(G) < \nu(\Omega)/k$ (or $0 < \nu(G) < \infty$ if $\nu(\Omega) = \infty$) the invariant set $\bigcup_{j=0}^{k-1}\varphi^j(G)$ has positive measure and a complement of positive measure, which contradicts ergodicity.
    \end{remark}

    \begin{remark} \label{r1a}
      In Corollary~\ref{c1a} the exceptional set of measure zero in (c) depends on $y$. If $X$ is not separable, we do not know whether $p$ can be chosen so that $p(\omega)$ is a projection on $X$ for almost all $\omega$.
    \end{remark}

    \begin{corollary} \label{c2}
      Assume the conditions of Corollary~\ref{c1}. Assume additionally that $\mu$ is ergodic with respect to $\varphi$ and that
      $\dim X = m < \infty$. Then the number of connected components of $\sigma(T)$ cannot exceed $m$.
    \end{corollary}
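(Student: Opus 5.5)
Suppose $\sigma(T)$ has at least $m+1$ connected components. We will produce $m+1$ nonzero spectral projections with pairwise zero products and show that their pointwise representatives are nonzero almost everywhere. That gives $m+1$ nonzero projections on the $m$-dimensional space $X$ with pairwise zero products, which is impossible.

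\emph{Choosing the pieces.} By Step 2 of the proof of Corollary~\ref{c1}, $\sigma(T)$ is rotation invariant, and every clopen subset has the form $\{\lambda\in\sigma(T): |\lambda|\in J\}$ with $J$ clopen in the compact set $R\subset(0,\infty)$. If $\sigma(T)$ has at least $m+1$ components, then $R$ has at least $m+1$ components. Hence we can choose radii $0<r_1<\dots<r_m$ outside $R$ such that each of the $m+1$ annular pieces $\sigma_j=\{\lambda\in\sigma(T): r_{j-1}<|\lambda|<r_j\}$ (with $r_0=0$, $r_{m+1}=\infty$) is nonempty and clopen. Let $P_j$ be the spectral projection of $\sigma_j$. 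Then $P_j\neq 0$, $P_jP_k=0$ for $j\ne k$, and $\sum_j P_j=I$.

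\emph{Pointwise representation.} By Corollary~\ref{c1}, for each $j$ there is $p_j:\bar\Omega\to L(X)$ with $p_j(\omega)$ a projection and $P_jx(\omega)=p_j(\omega)x(\omega)$ a.e. Apply the relations $P_jP_k=0$ to the functions $\chi_G\vec{x}_i$ for a basis $\{x_i\}$ of $X$ and sets $G$ of finite measure covering $\Omega$, and discard a null set. This gives $p_j(\omega)p_k(\omega)=0$ for $j\neq k$ and all $\omega$ off a null set.

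\emph{Ergodicity forces $p_j(\omega)\ne 0$ a.e.; this is the key step.} Let $G_j=\{\omega: p_j(\omega)=0\}$. Because $X$ is finite dimensional, $G_j=\{\omega: p_j(\omega)x_i=0 \ \forall i\}$ is measurable. We use the intertwining relation $TP_j=P_jT$, which holds because spectral projections commute with $T$. For $x\in E(X)$ this reads
\[a(\omega)p_j(\varphi(\omega))x(\varphi(\omega))=p_j(\omega)a(\omega)x(\varphi(\omega))\quad\text{a.e.}\]
Since $T$ is invertible and $V$ is an isomorphism by nonsingularity, $a(\omega)$ is invertible for a.e.\ $\omega$. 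We check this with the finite-dimensional basis argument: if $a(\omega)$ had a kernel on a set of positive measure, $T$ would have a kernel, because measurable selections of kernel vectors exist in finite dimensions. Taking $x$ of the form $\chi_G\vec{x}_i\circ\varphi^{-1}$ then gives $p_j(\omega)=a(\omega)p_j(\varphi(\omega))a(\omega)^{-1}$ a.e. Consequently $\operatorname{rank}p_j(\omega)=\operatorname{rank}p_j(\varphi(\omega))$ a.e., so $G_j$ is $\varphi$-invariant modulo null sets. By ergodicity, either $\mu(G_j)=0$ or $\mu(\Omega\setminus G_j)=0$. The latter would give $P_jx=0$ for all $x$, contradicting $P_j\ne0$. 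Hence $p_j(\omega)\neq0$ a.e.

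\emph{Conclusion.} For a.e.\ $\omega$ we now have $m+1$ nonzero projections $p_1(\omega),\dots,p_{m+1}(\omega)$ with pairwise zero products. Their ranges are independent: if $\sum_j y_j=0$ with $y_j\in\operatorname{ran}p_j(\omega)$, applying $p_k(\omega)$ gives $y_k=0$. So these ranges span a subspace of dimension at least $m+1$ in $X$, contradicting $\dim X=m$. The step most likely to need care is the invertibility of $a(\omega)$ and the measurability bookkeeping, though the finite dimensionality of $X$ makes both routine.
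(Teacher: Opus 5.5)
Your proof is correct and follows essentially the same route as the paper: pointwise projections $p_j(\omega)$ from Corollary~\ref{c1}, almost-everywhere invertibility of $a(\omega)$ via a measurable selection of kernel vectors, the intertwining relation $p_j(\omega)a(\omega)=a(\omega)p_j(\varphi(\omega))$, and ergodicity applied to the rank; the remaining differences are cosmetic (you use only the invariant set $\{p_j=0\}$ and independence of the ranges, where the paper uses constant ranks $r_j\ge 1$ summing to $m$). One correction: your aside that $V$ is an isomorphism ``by nonsingularity'' is false on a general $E(X)$ (boundedness and invertibility of $V$ is exactly the extra hypothesis of Corollary~\ref{c3}), so the invertibility of $a(\omega)$ must rest on the selection argument alone, which, as in the paper, does not need $V$.
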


    \begin{proof} Assume that $\sigma(T) = \sigma_1 \cup \dots \cup \sigma_k$, where $\sigma_j$ are nonempty, closed and pairwise disjoint, and let $P_j$ be the corresponding spectral projections. Then $P_j \neq 0$, $P_iP_j = 0$ for $i \neq j$, $P_1 + \dots + P_k = I$, and $P_jT = TP_j$. By Corollary~\ref{c1}, $P_jx(\omega) = p_j(\omega)x(\omega)$ a.e., and, since $X$ is finite-dimensional, after removing a set of measure zero we may assume that $p_i(\omega)p_j(\omega) = 0$ for $i \neq j$ and $\sum_j p_j(\omega) = I$ for all $\omega \in \bar{\Omega}$. Note that the functions $a$ and $p_j$ are measurable, because for $e \in X$ and $G \in \Sigma$ with $\mu(G) < \infty$ and $\mu(\varphi(G)) < \infty$ we have $a(\omega)e = (T\chi_{\varphi(G)}\vec{e})(\omega)$ and $p_j(\omega)e = (P_j\chi_G\vec{e})(\omega)$ for almost all $\omega \in G$.

First, $a(\omega)$ is invertible for almost all $\omega$. Indeed, otherwise there are a set $G \in \Sigma$ of positive measure such that $\mu(\varphi(G)) < \infty$ and a measurable function $v : G \to X$ with $\|v(\omega)\| = 1$ and $a(\omega)v(\omega) = 0$, $\omega \in G$. (The set where $\det a(\omega) = 0$ is measurable, and $v$ exists by a measurable selection theorem, see e.g.~\cite[Chapter III]{CV}, because $\mu$ is complete and the multifunction $\omega \mapsto \{v \in X : a(\omega)v = 0, \ \|v\| = 1\}$ has nonempty compact values and a measurable graph on this set.) The function $x$ equal to $v(\varphi^{-1}(\eta))$ for $\eta \in \varphi(G)$ and to $0$ otherwise is a nonzero element of $E(X)$ with $Tx = 0$, which contradicts the invertibility of $T$.

Applying $P_jT = TP_j$ to functions of the form $\chi_{\varphi(G)}\vec{e}$, $e \in X$, $\mu(\varphi(G)) < \infty$, we obtain $p_j(\omega)a(\omega) = a(\omega)p_j(\varphi(\omega))$ for almost all $\omega$, and therefore $\operatorname{rank} p_j(\omega) = \operatorname{rank} p_j(\varphi(\omega))$ almost everywhere. The function $\omega \mapsto \operatorname{rank} p_j(\omega) = \operatorname{tr} p_j(\omega)$ is measurable and $\varphi$-invariant, and since $\mu$ is ergodic with respect to $\varphi$, it is equal almost everywhere to a constant $r_j$. Because $P_j \neq 0$, we have $r_j \geq 1$. Hence $k \leq r_1 + \dots + r_k = m$.
\end{proof}

 We do not know if the statement of Corollary~\ref{c2} remains valid without the assumption that the operator $T$ is invertible. A partial answer is given by our next corollary.

    \begin{corollary} \label{c3}
      The statement of Corollary~\ref{c2} remains valid without the assumption that the operator $T$ is invertible if we assume that the composition operator $V$, $Vx(\omega) = x (\varphi(\omega))$, is bounded and invertible on $E(X)$.
    \end{corollary}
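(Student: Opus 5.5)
The plan is to repeat the proof of Corollary~\ref{c2}, making two changes: the characterization of the spectral subspaces in Step~1 of Corollary~\ref{c1} must avoid invertibility of $T$, and the rank argument must treat separately the (at most one) spectral set that contains $0$. Throughout, $T=M_aV$ with $M_a=TV^{-1}$. Since $V$ is bounded and invertible, $M_a$ is a bounded operator commuting with $L^\infty$, so $a$ is essentially bounded and (as $X$ is finite-dimensional) measurable. We still have $Tf=(f\circ\varphi)T$ for $f\in L^\infty(\mu)$. The ergodic measure $\mu$ is either purely atomic, a trivial case handled directly as in Remark~\ref{ratomic}, or nonatomic. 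In the nonatomic case $\varphi$ is aperiodic by Remark~\ref{rperiodic}, so $\psi$ has no periodic points. I would check that \cite[Corollary 12.30]{AAK} gives rotation invariance of $\sigma(T)$ without assuming invertibility of $T$. Granting that, every clopen subset of $\sigma(T)$ is again of the form $\{\lambda:|\lambda|\in J\}$, with $0$ possibly in $J$, and its spectral projection is a combination of the $\Pi_r$ for circles $\{|\lambda|=r\}$, $r>0$, that miss $\sigma(T)$. If $0\in\sigma'$, we use $\Pi_{t}$ with $t$ the outer radius, and no inner circle is needed.

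\emph{Step 1 without invertibility.} Let $E_1=\Pi_rE(X)$ and $E_2=(I-\Pi_r)E(X)$. The description $E_1=\{x:r^{-n}\|T^nx\|\to0\}$ from Corollary~\ref{c1} still holds, since its proof only used that $T|_{E_2}$ is invertible with $\sigma(T|_{E_2})\subset\{|\lambda|>r\}$. Hence $fE_1\subset E_1$. For $E_2$ I would use the description
\[
E_2=\{x:\ \exists\, y_n\in E(X),\ T^ny_n=x,\ r^n\|y_n\|\to0\}.
\]
The inclusion ``$\subset$'' holds because $y_n=(T|_{E_2})^{-n}x$ has norm at most $Cs^{-n}\|x\|$ for some $s>r$. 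For ``$\supset$'', write $\Pi_rx=T^n\Pi_ry_n$ and use $\|T^n\Pi_r\|\le C\rho^n$ with $\rho<r$. Then $fx=fT^ny_n=T^n\big((f\circ\varphi^{-n})y_n\big)$, and $\|f\circ\varphi^{-n}\|_\infty=\|f\|_\infty$ by nonsingularity, so $fE_2\subset E_2$. As before, this gives $\Pi_rf=f\Pi_r$. Theorem~\ref{t1} and the argument of Corollary~\ref{c2} then yield pointwise projections $p_j(\omega)$ with $\sum_jp_j=I$ and $p_ip_j=0$ for $i\ne j$, and, from $P_jT=TP_j$, the intertwining relation $p_j(\omega)a(\omega)=a(\omega)p_j(\varphi(\omega))$ a.e.

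\emph{Rank argument (main obstacle).} Let $\sigma(T)=\sigma_1\cup\dots\cup\sigma_k$, and suppose $0\notin\sigma_j$. Then $T$ maps $P_jE(X)$ bijectively onto itself, and I would deduce that $a(\omega)$ maps $\operatorname{ran}p_j(\varphi(\omega))$ bijectively onto $\operatorname{ran}p_j(\omega)$ for a.e.\ $\omega$. Failure of injectivity on a set of positive measure gives, by the measurable selection argument of Corollary~\ref{c2}, a nonzero $x\in P_jE(X)$ with $Tx=0$. Failure of surjectivity gives a measurable unit vector field in $\operatorname{ran}p_j(\omega)$ off $a(\omega)\operatorname{ran}p_j(\varphi(\omega))$ on some $G$ with $0<\mu(G)<\infty$. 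The corresponding element of $P_jE(X)$ is then not in $TP_jE(X)$, because $(Tx)(\omega)\in a(\omega)\operatorname{ran}p_j(\varphi(\omega))$ a.e. Making these selections carefully is the delicate point. Granting it, $\operatorname{rank}p_j$ is $\varphi$-invariant, hence a.e.\ equal to a constant $r_j$ by ergodicity, and $r_j\ge1$ because $P_j\ne0$.

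For the single index $j_0$ (if any) with $0\in\sigma_{j_0}$, I only use $P_{j_0}\neq0$. This forces $p_{j_0}\ne0$, and hence $\operatorname{rank}p_{j_0}(\omega)\ge1$, on a set of positive measure. At any point of that set we get $m=\sum_j\operatorname{rank}p_j(\omega)\ge (k-1)+1=k$, which proves $k\le m$.
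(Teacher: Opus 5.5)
Most of your argument is sound, but it rests on one step you grant without proof, and that step is where the difficulty of the statement sits.

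\textbf{What works.} Your Step~1 is correct and is a new ingredient compared with the paper. Describing $(I-\Pi_r)E(X)$ as the set of $x$ with $x=T^ny_n$, $r^n\|y_n\|\to0$, shows that every circle projection $\Pi_r$ commutes with $L^\infty$ without invertibility of $T$. Your rank count is also sound. For $0\notin\sigma_j$ the restriction $T|_{P_jE(X)}$ is invertible, and your two selection arguments give that $a(\omega)$ maps $\operatorname{ran}p_j(\varphi(\omega))$ bijectively onto $\operatorname{ran}p_j(\omega)$ a.e. For the component containing $0$ you only need $P_{j_0}\neq0$.

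\textbf{The gap.} The step you grant is rotation invariance of $\sigma(T)$ for noninvertible $T$. Without it, a clopen subset of $\sigma(T)$ need not be cut out by circles. Then you cannot write its spectral projection through the $\Pi_r$, and you only bound the number of radially separated pieces of $\sigma(T)$. The paper applies \cite[Corollary 12.30]{AAK} only to invertible operators, and you have not checked that it covers your case.

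Note also that, apart from this step, your argument never uses the hypothesis of Corollary~\ref{c3}. Measurability of $a$ follows from boundedness of $T$ exactly as in the proof of Corollary~\ref{c2}, and boundedness of $a$ is not used afterwards. So if rotation invariance were available for every bounded $T$ of this form, your proof would establish Corollary~\ref{c2} with no invertibility assumption at all. The paper explicitly leaves exactly that question open just before Corollary~\ref{c3}.

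\textbf{How the paper proceeds, and how to repair your proof.} The paper uses the hypothesis on $V$ to avoid the issue. Since $M_a=TV^{-1}$ is bounded, $a\in L^\infty(\mu;M_m(\mathds{C}))$, which is a finite von Neumann algebra, so $a$ is a norm limit of invertible $a_n$. Then $T_n=M_{a_n}V$ are invertible and $T_n\to T$ in norm. Corollary~\ref{c2} for $T_n$, upper semicontinuity of the spectrum and continuity of Riesz projections \cite{Ne} then bound the number of components of $\sigma(T)$ by $m$.

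The same approximation repairs your proof. Let $\sigma'$ be clopen in $\sigma(T)$, and let $U\supset\sigma'$ and $U''\supset\sigma(T)\setminus\sigma'$ be disjoint open sets. For large $n$, the spectral projection of $T_n$ for $\sigma(T_n)\cap U$ commutes with $L^\infty$ by the proof of Corollary~\ref{c1}. It converges in norm to the spectral projection of $T$ for $\sigma'$. Hence every spectral projection of $T$ commutes with $L^\infty$, with no rotation invariance of $\sigma(T)$ needed (and your Step~1 becomes unnecessary). Theorem~\ref{t1} and your rank argument then finish the proof.

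\textbf{A smaller point.} Aperiodicity of $\varphi$ is a standing hypothesis inherited from Corollary~\ref{c1}, so your atomic/nonatomic case split is unnecessary. The purely atomic case is also not trivial by Remark~\ref{ratomic}. That remark only represents operators already known to commute with $L^\infty$; commutation of the spectral projections and your rank count are still needed there, and without aperiodicity the statement fails (cf.\ Remark~\ref{rperiodic}).
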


    \begin{proof}
  Since $T = M_aV$, where $M_a$ is the operator of multiplication by $a$, the operator $M_a = TV^{-1}$ is bounded on $E(X)$, and the argument used to prove~(\ref{eq3}) shows that $\|a(\omega)\| \leq \|M_a\|$ for almost all $\omega$. Thus $a$ belongs to $L^\infty(\mu; M_m(\mathds{C}))$, which is a finite von Neumann algebra, and therefore $a$ can be approximated in the norm of this algebra by invertible elements $a_n$ (see \cite[V.3.1.5, V.3.1.16, pp. 446, 449]{Bl}). Since $\|M_{a_n - a}\| \leq \|a_n - a\|_\infty$, the operators $T_n = M_{a_n}V$, $T_nx(\omega) = a_n(\omega)x(\varphi(\omega))$, are invertible, satisfy the assumptions of Corollary~\ref{c2}, and converge to $T$ in the operator norm. Assume that $\sigma(T)$ has more than $m$ connected components. Then $\sigma(T) = \sigma_1 \cup \dots \cup \sigma_{m+1}$, where the sets $\sigma_j$ are nonempty, closed and pairwise disjoint. Let $U_1, \dots, U_{m+1}$ be pairwise disjoint open neighborhoods of $\sigma_1, \dots, \sigma_{m+1}$. By the upper semicontinuity of the spectrum, $\sigma(T_n) \subset U_1 \cup \dots \cup U_{m+1}$ for all sufficiently large $n$. Moreover, by the continuity of Riesz projections \cite{Ne}, the spectral projections of $T_n$ corresponding to $\sigma(T_n) \cap U_j$ converge in norm to the (nonzero) spectral projections of $T$ corresponding to $\sigma_j$. Hence they are nonzero for large $n$, that is, $\sigma(T_n) \cap U_j \neq \emptyset$, $j = 1, \dots, m+1$. Therefore $\sigma(T_n)$ has at least $m+1$ connected components, which contradicts Corollary~\ref{c2} applied to $T_n$.
\end{proof}

     \begin{remark} \label{r0}
        A result similar to Corollary~\ref{c2} is~\cite[Theorem 9.5]{An}.  It states the following. Let $C^\star(\mathfrak{A},U)$ be the $C^\star$-algebra generated by a $C^\star$-algebra $\mathfrak{A}$ and a unitary operator $U$ such that $U\mathfrak{A}U^{-1} = \mathfrak{A}$. Assume that $\mathfrak{A}$ is isomorphic to the algebra of endomorphisms of an $N$-dimensional complex vector bundle over a compact space $M$, that $\mathds{Z}$ acts topologically freely on $\mathfrak{A}$, and that $M$ is irreducible with respect to the homeomorphism $\alpha$ of $M$ induced by $U$. Then for every $a \in \mathfrak{A}$ the spectrum of $aU$ has at most $N$ connected components. For scalar coefficients ($N = 1$) see also~\cite[Theorem 7.2, Corollary 7.4, Lemma 8.3]{AL}.

    Neither of these two results contains the other. Corollary~\ref{c2} applies to arbitrary K\"othe--Bochner spaces $E(X)$, measurable coefficients, and transformations $\varphi$ that need not preserve $\mu$, whereas \cite[Theorem 9.5]{An} is a statement about $C^\star$-algebras of operators on a Hilbert space. On the other hand, \cite[Theorem 9.5]{An} applies to nontrivial vector bundles and to non-invertible coefficients, and its irreducibility assumption is topological rather than measure-theoretic. In this sense the two results complement each other.
    \end{remark}

    The results of this section were obtained by combining the general
representation Theorems~\ref{t1} and \ref{t2} with the simple fact that
the spectral projections commute with $L^\infty$; the specific structure
of a spectral projection, namely, that it is a contour integral of the
resolvent of a weighted composition operator, has not been used. In the next section we
show that this structure can be exploited to remove the assumptions on
the Banach space $X$ in the case when $E=L^p(\mu)$, $1 \leq p < \infty$.

    \section{Spectral projections. The spaces $L^p(X)$} \label{s4}

By Corollaries~\ref{c1} and \ref{c1a}, if $X$ is either separable or complemented in $X^{\star\star}$ then every spectral projection of a weighted composition operator on $E(X)$ is an operator of multiplication by an $L(X)$-valued function. In this section we prove the same conclusion for an \emph{arbitrary} Banach space $X$, at the price of the following restrictions: $E = L^p(\mu)$, $1 \leq p < \infty$; the transformation $\varphi$ is nonsingular and aperiodic; and the weight, divided by the factor that makes the composition operator an isometry of $L^p$, is measurable in the norm of $L(X)$.
Theorem~\ref{t4} below and its proof were inspired by ideas of C.~Chicone and Y.~Latushkin (see~\cite[Section 4.1]{CL}) but the theorem does not follow directly from their results. There the coefficients are continuous functions on $\mathds{R}$, and continuity is used to pass from an operator on $L^p(\mathds{R};X)$ to its restrictions to individual orbits. In our situation there is no topology, and this role is played by the norm-measurability of the weight.

The general scheme of the proof goes back to A.~Antonevich and A.~Lebedev. Let $C^\star(\mathfrak{A},U)$ be the $C^\star$-algebra generated by a $C^\star$-algebra $\mathfrak{A}$ and a unitary operator $U$ with $U\mathfrak{A}U^{-1} = \mathfrak{A}$, and assume that the action of $\mathds{Z}$ on $\mathfrak{A}$ is topologically free. If $a \in \mathfrak{A}$ and $\sigma(aU) \cap \mathds{T} = \emptyset$, then the spectral projection of $aU$ corresponding to the part of the spectrum inside the unit disk belongs to $\mathfrak{A}$; see \cite[Lemma 19.1]{AL} and, for algebras of endomorphisms of finite-dimensional vector bundles, the necessity part of the proof of \cite[Theorem 9.2, pp.~81--82]{An}. In these proofs, as in ours, the spectral projection is written as a contour integral of the resolvent, it is shown to commute with the coefficient algebra, and then the conditional expectation (the von Neumann mapping $N$) is used to show that it belongs to the coefficient algebra. For a commutative coefficient algebra $\mathfrak{A} = C(M)$ acting isometrically on a Banach space, the same conclusion is \cite[Theorem 7.2]{AL}.

For scalar coefficients, and more generally whenever the commutant of $L^\infty(\mu)$ consists of multiplication operators (e.g., if $X$ is separable or complemented in $X^{\star\star}$, see Theorems~\ref{t1} and~\ref{t2}), a representation $(Px)(\omega) = P(\omega)x(\omega)$ with $P(\omega) \in L(X)$ follows from the fact that $P$ commutes with $L^\infty(\mu)$ (cf.\ Corollaries~\ref{c1} and~\ref{c1a} and \cite[Theorem 7.2]{AL}). For an arbitrary Banach space $X$ this argument is not available: an operator on $L^p(X)$ commuting with $L^\infty(\mu)$ is known only to be given by a function with values in $L(X,X^{\star\star})$ (Remark~\ref{r1}). We therefore have to show, in addition, that $P$ belongs to the closed algebra generated by the operators of multiplication by norm-measurable $L(X)$-valued functions and the isometry $V$ defined in (\ref{eqV}). In the $C^\star$-setting of \cite{AL,An} the corresponding step is automatic, because $C^\star$-subalgebras are inverse closed. In our setting this algebra is not a $C^\star$-algebra, its coefficients are noncommuting operators on an arbitrary Banach space, and topological freeness is replaced by aperiodicity of $\varphi$. The inverse closedness needed for the resolvent of the weighted composition operator is the main technical result of this section (Proposition~\ref{p310}).

\subsection{Notation and statement}

Let $(\Omega, \Sigma, \mu)$ be a complete measure space with a $\sigma$-finite measure $\mu$, and let $Y$ be a Banach space. For $1 \leq p < \infty$ we write $L^p(Y) = L^p(\Omega,\Sigma,\mu;Y)$.

Let $L^\infty_u(\Omega; L(Y))$ be the K\"othe--Bochner space $L^\infty(\Omega, \Sigma, \mu; L(Y))$, where $L(Y)$ is regarded as a Banach space with the operator norm. Thus $b \in L^\infty_u(\Omega; L(Y))$ if $b : \Omega \to L(Y)$ is strongly measurable, i.e., $b$ is an almost everywhere limit in the operator norm of finitely valued measurable functions, and $\|b\|_\infty = \operatorname{ess\,sup}_\omega \|b(\omega)\| < \infty$. Functions equal almost everywhere are identified. The index $u$ indicates that measurability is understood with respect to the uniform operator topology. With the pointwise product $L^\infty_u(\Omega; L(Y))$ is a Banach algebra. For such $b$ we put $M_b x(\omega) = b(\omega)x(\omega)$. We say that a function $b : \Omega \to L(Y)$ is \emph{norm-measurable} if it belongs to $L^\infty_u(\Omega;L(Y))$.

By~\cite[Lemma 3.2]{DKO} applied with $E = L^\infty(\mu)$ and $X = L(Y)$, countably valued functions $\sum_j \chi_{G_j} B_j$, where $B_j \in L(Y)$ and $G_j \in \Sigma$ are pairwise disjoint, are dense in $L^\infty_u(\Omega; L(Y))$. In other words, every $b \in L^\infty_u(\Omega; L(Y))$ is, outside a set of measure zero, a uniform limit of such functions. Conversely, an essentially bounded function with this property belongs to $L^\infty_u(\Omega; L(Y))$, because countably valued essentially bounded functions belong to this space and it is complete.

\begin{remark} \label{r30}
Norm-measurability is strictly stronger than measurability in the strong operator topology, i.e., than measurability of all functions $\omega \mapsto b(\omega)y$, $y \in Y$. Indeed, the function $b : [0,1] \to L(L^2[0,1])$ of Remark~\ref{rnotnorm} is continuous in the strong operator topology, but $b \notin L^\infty_u([0,1]; L(L^2[0,1]))$. Remark~\ref{rnotnorm} also shows that the representing functions provided by Theorem~\ref{t1} need not be norm-measurable.
\end{remark}

Let $\varphi : \Omega \to \Omega$ be an invertible measurable nonsingular map, and recall the notions of $N$-separated sets and of aperiodicity from Definition~\ref{daperiodic}. If $\varphi$ is aperiodic, then for every $N$ every set of positive measure contains an $N$-separated subset of finite positive measure. In the terminology of \cite[p.~74]{An}, the action of $\mathds{Z}$ on $L^\infty(\mu)$ generated by $\varphi$ is then metrically free.

Let $\rho = d(\mu \circ \varphi)/d\mu$ be the Radon--Nikodym derivative of the measure $G \to \mu(\varphi(G))$ with respect to $\mu$; then $0 < \rho < \infty$ almost everywhere. For $1 \leq p < \infty$ let
\begin{equation} \label{eqV}
(Vx)(\omega) = \rho(\omega)^{1/p}\, x(\varphi(\omega)), \quad x \in L^p(Y), \ \omega \in \Omega.
\end{equation}
Since $\int \|x(\varphi(\omega))\|^p \rho(\omega)\,d\mu(\omega) = \int \|x\|^p\,d\mu$, $V$ is an invertible isometry of $L^p(Y)$, and $(V^{-1}x)(\omega) = \rho(\varphi^{-1}(\omega))^{-1/p}x(\varphi^{-1}(\omega))$. If $\varphi$ preserves $\mu$, then $\rho = 1$ and $V$ is the composition operator $x \mapsto x \circ \varphi$. The operator $V$ depends on $p$; when it is used on $L^q$ we always mean the operator (\ref{eqV}) with $q$ in place of $p$. Since $\rho$ is a scalar function, for every (scalar or operator-valued) multiplier $c$
\begin{equation} \label{eqrules}
V^k M_c = M_{c \circ \varphi^k} V^k, \quad k \in \mathds{Z}.
\end{equation}

\begin{theorem} \label{t4}
Let $(\Omega, \Sigma, \mu)$ be a complete measure space with a $\sigma$-finite measure $\mu$, let $1 \leq p < \infty$, and let $\varphi : \Omega \to \Omega$ be an invertible aperiodic nonsingular transformation. Let $X$ be an arbitrary Banach space, let $A \in L^\infty_u(\Omega; L(X))$, and let
\[ (Tx)(\omega) = A(\omega)\rho(\omega)^{1/p}x(\varphi(\omega)), \quad x \in L^p(X), \ \omega \in \Omega, \]
that is, $T = M_AV$.
Assume that $T$ is invertible and that $\sigma(T) = \sigma_1 \cup \sigma_2$, where $\sigma_1 \subset \mathds{U}$ and $\sigma_2 \subset \mathds{C} \setminus \mathds{D}$. Let $P$ be the spectral projection of $T$ corresponding to $\sigma_1$. Then there is a norm-measurable function $P(\cdot) : \Omega \to L(X)$ such that $P(\omega)^2 = P(\omega)$ and $\|P(\omega)\| \leq \|P\|$ for every $\omega \in \Omega$, and for every $x \in L^p(X)$
\[ (Px)(\omega) = P(\omega)x(\omega) \quad \text{for almost all } \omega \in \Omega. \]
\end{theorem}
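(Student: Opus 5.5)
Following the Antonevich--Lebedev scheme outlined before the statement, I will show that $P$ lies in a closed operator algebra on which a conditional expectation onto multipliers is available, and then identify $P$ with its expectation.

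Let $\mathcal{B}_0$ be the algebra of finite sums $\sum_k M_{b_k}V^k$, $b_k \in L^\infty_u(\Omega;L(X))$, and let $\mathcal{B}$ be its norm closure in $L(L^p(X))$. By (\ref{eqrules}), $\mathcal{B}_0$ is an algebra, and $T = M_AV \in \mathcal{B}_0$.

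\emph{Step 1: the map $N_0$.} On $\mathcal{B}_0$ put $N_0(\sum_k M_{b_k}V^k) = M_{b_0}$. The key estimate is $\|M_{b_0}\| \leq \|\sum_k M_{b_k}V^k\|$, which makes $N_0$ well defined and extends it to a contraction $N : \mathcal{B} \to L^\infty_u(\Omega;L(X))$, the latter identified with a closed subalgebra via $b \mapsto M_b$ (closedness uses $\|M_b\| = \|b\|_\infty$). To prove the estimate I use aperiodicity. Take $|k| \leq K$ and $\varepsilon>0$. Approximate each $b_k$ uniformly by countably valued functions, choose a set $G$ of finite positive measure on which all of them are nearly constant and $\|b_0(\omega)\|$ is close to $\|b_0\|_\infty$, and then pass to a $2K$-separated subset $F \subset G$ of positive measure. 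For $x$ supported in $F$, the pieces $M_{b_k}V^kx$ are supported in $\varphi^{-k}(F)$, and these sets are pairwise disjoint. Hence $\|\chi_F \sum_k M_{b_k}V^k x\| = \|M_{b_0}x\|$, and this is at least $(\|b_0\|_\infty - \varepsilon)\|x\|$ for a suitable constant-direction $x = \chi_F\vec{y}$. Norm-measurability of $b_0$ is what permits choosing $F$ and $y$ uniformly. Because the sets are disjoint, this works in $L^p$ for every $p$.

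\emph{Step 2: inverse closedness (the main obstacle).} I need $(\lambda - T)^{-1} \in \mathcal{B}$ for $|\lambda|=1$, so that
\[ P = \frac{1}{2\pi i}\oint_{|\lambda|=1}(\lambda-T)^{-1}\,d\lambda \in \mathcal{B}. \]
My plan is to restrict to orbits. For a.e.\ $\omega$ define $\pi_\omega : \mathcal{B} \to L(\ell^p(\mathds{Z};X))$ by sending $M_b$ to $\operatorname{diag}(b(\varphi^n\omega))$ and $V$ to the shift $S$, with the weights $\rho$ absorbed into the normalization. I then have to show two things.
\begin{enumerate}[(i)]
\item $\|B\| = \operatorname{ess\,sup}_\omega\|\pi_\omega(B)\|$. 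This is the easier half, and again uses norm-measurability.
\item If $B\in\mathcal{B}$ is invertible in $L(L^p(X))$, then each $\pi_\omega(B)$ is invertible with uniformly bounded inverse, and the matrix entries $C[n,m]$ of these inverses depend norm-measurably on $\omega$ and decay off the diagonal.
\end{enumerate}
Given (ii), the inverse can be approximated by band truncations $\sum_k M_{c_k}V^k \in \mathcal{B}_0$. An alternative I would try first is to avoid (ii) for general $B$: write $(\lambda - T)^{-1}$ as a Neumann-type series or via the dichotomy splitting. I expect this step to take most of the work.

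\emph{Step 3: conclusion.} $P$ commutes with $L^\infty$, by Step 1 of Corollary~\ref{c1}. For $B \in \mathcal{B}$ commuting with $L^\infty$, I claim $B = M_{N(B)}$. To see this, approximate $B$ by $B_j \in \mathcal{B}_0$, and replace $B_j$ by the averages $\sum_i \chi_{F_i}B_j\chi_{F_i}$ over partitions into $2K$-separated sets, which aperiodicity allows. These averages kill the off-diagonal terms $k \neq 0$ and stay close to $B$ because $B$ commutes with the $\chi_{F_i}$; the precise estimate again uses disjointness in $L^p$. So $Px(\omega) = P(\omega)x(\omega)$ with $P(\cdot) = N(P)$ norm-measurable. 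Then $P^2=P$ gives $P(\omega)^2 = P(\omega)$ a.e., and by the norm identity of Step 1 applied to multipliers, $\|P(\omega)\| \leq \|P\|$ a.e. Finally, I redefine $P(\omega) = 0$ on the exceptional null set so that both properties hold for every $\omega$.
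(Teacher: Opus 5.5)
The gap is Step~2, which is the heart of the theorem and which you leave as a list of claims. Your Steps~1 and~3 are sound, even slightly sharper than needed. The disjoint-support argument gives $\|N_0\|\le 1$. The compression $C\mapsto\sum_i\chi_{F_i}C\chi_{F_i}$, over a countable partition of $\Omega$ (up to a null set) into $K$-separated sets obtained by exhaustion, is a contraction on $L(L^p(X))$; so it directly shows that an element of $\mathcal{B}$ commuting with $L^\infty$ is a norm limit of multipliers.

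Your formulation of (ii) is also not the right statement. It is not clear that $\pi_\omega$ can be defined on all of $\mathcal{B}$ for almost every $\omega$ at once: an estimate $\|\pi_\omega(b)\|\le\|b\|$ holds only off a null set depending on $b$, and $\mathcal{B}$ is nonseparable. Moreover, for a general invertible element of $\mathcal{B}$, whose own coefficients need not be summable, there is no reason why the entries of the inverse should decay fast enough for band truncations to converge in norm. What is needed, and what suffices, is inverse closedness for the single one-band operator $D=I-M_dV$ with $d=A/\lambda$. Your fallback does not help either. A Neumann series in $T$ or in $T^{-1}$ converges on $\mathds{T}$ only if $\sigma_2=\emptyset$ or $\sigma_1=\emptyset$, i.e.\ exactly in the trivial cases $P=I$ or $P=0$. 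Using the dichotomy splitting presupposes knowledge of $P$.

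Concretely, the following ingredients are missing.

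\emph{Lower bounds on the orbits.} The bound $\|Dx\|\ge K^{-1}\|x\|$, $K=\|D^{-1}\|$, must be transferred to the orbit operators $D_\omega=I-\operatorname{diag}(d(\varphi^n(\omega)))_nS$ on $\ell^p(\mathds{Z};X)$ for almost all $\omega$. The paper tests $D$ on $f=\sum_{|n|\le M}V^{-n}(\chi_F\vec{\xi}_n)$, where $F$ is a highly separated subset of a set on which the finitely many values $d(\varphi^m(\omega))$, $|m|\le M+1$, are nearly constant. This is one of the places where norm-measurability enters.

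\emph{Surjectivity.} A lower bound does not give invertibility; surjectivity of $D_\omega$ comes from duality. $D^\star$ maps $L^q(X^\star)$ into itself and acts there as $I-M_{d'}V^{-1}$ with the norm-measurable weight $d'=(d\circ\varphi^{-1})^\star$, and its orbit operators are exactly $D_\omega^\star$. For $p=1$ this gives a lower bound only on $c_0(\mathds{Z};X^\star)$, so a closed-range plus Goldstine argument is needed to get back to $\ell^1(\mathds{Z};X)$.

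\emph{Decay.} The uniform bound $\|D_\omega^{-1}\|\le K$ yields exponential decay of the entries $D_\omega^{-1}[n,m]$, by conjugating with $\operatorname{diag}(e^{-\gamma|n-m|})_n$.

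\emph{Measurability.} Via a perturbation estimate, this decay shows that $c_k(\omega)=D_\omega^{-1}[0,k]$ depends, up to $\delta$, only on $d(\varphi^n(\omega))$, $|n|\le M$. This is what makes $c_k$ norm-measurable; pointwise invertibility alone does not give norm-measurability of $\omega\mapsto D_\omega^{-1}[0,k]$.

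\emph{Identification of the inverse.} The covariance $D_{\varphi(\omega)}=SD_\omega S^{-1}$ converts $D_\omega D_\omega^{-1}=I$ into $c_k-d\,(c_{k-1}\circ\varphi)=\delta_{k0}I$, whence $D\sum_kM_{c_k}V^k=I$.

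Without these steps $P\in\mathcal{B}$ is not established, and then Step~3 cannot be applied. For general $X$, commutation with $L^\infty$ alone yields only an $L(X,X^{\star\star})$-valued representation.
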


Equivalently, Theorem~\ref{t4} applies to every weighted composition operator $(Tx)(\omega) = a(\omega)x(\varphi(\omega))$ whose weight has the form $a = \rho^{1/p}A$ with $A \in L^\infty_u(\Omega;L(X))$. Note that the composition operator $x \mapsto x \circ \varphi$ itself need not be bounded on $L^p(X)$. A weight $a = hA$ with a scalar function $h$ such that $|h| = \rho^{1/p}$ reduces to this case, because $(h/|h|)A \in L^\infty_u(\Omega;L(X))$.

We write $\mathcal{A}_u = \{M_b : b \in L^\infty_u(\Omega;L(X))\}$, and we denote by $\mathcal{B}$ the closure in $L(L^p(X))$ of the set $\mathcal{B}_0$ of all finite sums $\sum_{|k| \leq N} M_{b_k}V^k$, $b_k \in L^\infty_u(\Omega; L(X))$. By (\ref{eqrules}), $\mathcal{B}_0$ is an algebra, so $\mathcal{B}$ is a closed subalgebra of $L(L^p(X))$. Note that $T = M_A V \in \mathcal{B}_0$.

\subsection{Outline of the proof} \label{ssoutline}
The proof of Theorem~\ref{t4} consists of four steps. We prove the auxiliary results in the subsequent subsections.

\begin{proof}[Proof of Theorem~\ref{t4}]
\emph{Step 1: $P \in \mathcal{B}$.} This is Corollary~\ref{c311}. It is the main technical part of the proof. We show that $(\lambda - T)^{-1} \in \mathcal{B}$ for $|\lambda| = 1$ (Proposition~\ref{p310}), and then $P \in \mathcal{B}$ because $P$ is the integral of the resolvent over the unit circle. To prove Proposition~\ref{p310} we restrict $I - \lambda^{-1}T$ to the orbits of $\varphi$. This gives a family of operators $\pi_\omega$ on $\ell^p(\mathds{Z};X)$. These operators are uniformly invertible for almost all $\omega$ (Lemmas~\ref{l36} and \ref{l37}). The matrix entries of their inverses decay exponentially off the diagonal (Lemma~\ref{l38}) and depend continuously on finitely many values of the weight along the orbit (Lemma~\ref{l39}). This allows us to write $(I - \lambda^{-1}T)^{-1}$ as an absolutely convergent series $\sum_k M_{c_k}V^k$ with norm-measurable coefficients $c_k$.

\emph{Step 2: $P$ commutes with $L^\infty(\mu)$.} This is Lemma~\ref{l35}. The range of $P$ consists of the vectors $x$ with $\|T^n x\| \to 0$, and its kernel consists of the vectors $x$ with $\|T^{-n}x\| \to 0$. Both subspaces are invariant under multiplication by bounded functions.

\emph{Step 3: $P$ is a multiplication operator.} An element of $\mathcal{B}$ that commutes with $L^\infty(\mu)$ belongs to $\mathcal{A}_u$ (Lemma~\ref{l34}). By Steps 1 and 2, $P = M_b$ for some $b \in L^\infty_u(\Omega;L(X))$. Lemma~\ref{l34} relies on the ``conditional expectation'' $N_0$ of Lemma~\ref{l33}, and this is where aperiodicity of $\varphi$ is used.

\emph{Step 4: choice of a representative.} By Lemma~\ref{l312}, $b$ can be chosen so that $b(\omega)$ is a projection and $\|b(\omega)\| \leq \|P\|$ for every $\omega \in \Omega$. It remains to put $P(\omega) = b(\omega)$.
\end{proof}

The structure of this proof ($P$ belongs to the algebra generated by the coefficients and the composition operator, $P$ commutes with the coefficients, hence $P$ belongs to the coefficient algebra) is the same as in the proofs of \cite[Lemma 19.1]{AL} and \cite[Theorem 9.2]{An}; compare Remarks~\ref{r31} and \ref{r32}.

\subsection{Elements of $\mathcal{B}$ commuting with $L^\infty$}

\begin{lemma} \label{l31}
For $b \in L^\infty_u(\Omega;L(Y))$ we have $\|M_b\| = \|b\|_\infty$ on $L^p(Y)$. Consequently $\{M_b : b \in L^\infty_u(\Omega; L(Y))\}$ is a closed subalgebra of $L(L^p(Y))$.
\end{lemma}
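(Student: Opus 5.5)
The plan is to prove the two inequalities $\|M_b\| \leq \|b\|_\infty$ and $\|M_b\| \geq \|b\|_\infty$ directly, and then derive closedness from the resulting isometry.

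\emph{Upper bound.} For $x \in L^p(Y)$ the function $M_bx$ is strongly measurable. To see this, approximate $b$ uniformly off a null set by countably valued functions $\sum_j \chi_{G_j}B_j$, and approximate $x$ by countably valued functions; the products of the approximants are countably valued and converge a.e. in norm to $b(\cdot)x(\cdot)$. Since $\|b(\omega)x(\omega)\| \leq \|b\|_\infty\|x(\omega)\|$ a.e., integrating gives $\|M_b\| \leq \|b\|_\infty$.

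\emph{Lower bound.} This is the step that needs the norm-measurability of $b$, and it is the main obstacle. Given $\varepsilon > 0$, choose a countably valued $c = \sum_j \chi_{G_j}B_j$ with $\|b(\omega) - c(\omega)\| < \varepsilon$ outside a null set. Since $\operatorname{ess\,sup}\|b(\cdot)\| = \|b\|_\infty$, some $G_j$ of positive measure satisfies $\|B_j\| > \|b\|_\infty - 2\varepsilon$. By $\sigma$-finiteness, shrink $G_j$ to a subset $G$ with $0 < \mu(G) < \infty$. Pick $y \in Y$ with $\|y\| = 1$ and $\|B_jy\| > \|b\|_\infty - 2\varepsilon$, and set $x = \chi_G\vec{y} \in L^p(Y)$. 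Then on $G$ we have $\|b(\omega)y\| \geq \|B_jy\| - \varepsilon > \|b\|_\infty - 3\varepsilon$ a.e., so $\|M_bx\|_p \geq (\|b\|_\infty - 3\varepsilon)\mu(G)^{1/p} = (\|b\|_\infty - 3\varepsilon)\|x\|_p$. Letting $\varepsilon \to 0$ gives $\|M_b\| \geq \|b\|_\infty$. The essential point is that a single vector $y$ works on a whole set of positive measure. This is exactly what can fail without norm-measurability (compare Remark~\ref{rnotnorm}), and it is why the countably valued approximation is the key tool here.

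\emph{Closedness.} The map $b \mapsto M_b$ is an algebra homomorphism from $L^\infty_u(\Omega;L(Y))$ into $L(L^p(Y))$, since $M_{bc} = M_bM_c$ pointwise. By the two bounds it is an isometry, and $L^\infty_u(\Omega;L(Y))$ is complete as a K\"othe--Bochner space. The image of a complete space under a linear isometry is complete, hence closed in $L(L^p(Y))$. Being the image of an algebra under a homomorphism, it is also a subalgebra.
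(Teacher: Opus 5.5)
Your proposal is correct and is essentially the paper's proof: approximate $b$ off a null set by a countably valued $\sum_j \chi_{G_j}B_j$, pick a piece $G_j$ of positive measure whose value has almost maximal norm, test $M_b$ on $\chi_G\vec{y}$ with $G \subset G_j$ of finite positive measure, and get closedness from the isometry together with the completeness of $L^\infty_u(\Omega;L(Y))$. The only cosmetic difference is that the paper first bounds $\|M_{b'}\|$ from below and then uses $\|M_b - M_{b'}\| \leq \|b - b'\|_\infty$, whereas you estimate $\|b(\omega)y\|$ directly on $G$.
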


\begin{proof}
Clearly $M_b$ maps $L^p(Y)$ into itself and $\|M_b\| \leq \|b\|_\infty$. Conversely, let $\delta > 0$ and let $b' = \sum_j \chi_{G_j} B_j$ be countably valued with $\|b - b'\|_\infty < \delta$. There is a $j$ with $\mu(G_j) > 0$ and $\|B_j\| > \|b'\|_\infty - \delta$. Choose $G \subset G_j$ with $0 < \mu(G) < \infty$, choose $y \in Y$, $y \neq 0$, with $\|B_j y\| > (\|b'\|_\infty - \delta)\|y\|$, and put $x = \chi_{G}\vec{y} \in L^p(Y)$. Then $\|M_{b'}x\| > (\|b'\|_\infty - \delta)\|x\|$, and therefore $\|M_b\| \geq \|M_{b'}\| - \delta \geq \|b\|_\infty - 3\delta$.
\end{proof}

\begin{lemma} \label{l32}
Let $\mathcal{F} \subset \Sigma$ be such that every set of positive measure contains a member of $\mathcal{F}$ of positive measure. Let $h, k \geq 0$ be functions in $L^p(\mu)$ and $c > 0$, and assume that $\|\chi_F h\|_p \leq c\|\chi_F k\|_p$ for every $F \in \mathcal{F}$. Then $h \leq 2ck$ almost everywhere.
\end{lemma}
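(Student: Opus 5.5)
We argue by contradiction, in the spirit of the proof of (\ref{eq3}). Suppose the set $G = \{\omega : h(\omega) > 2c\,k(\omega)\}$ has positive measure. On $G$ we have $h > 0$. We would like a subset of $G$ on which $h$ and $k$ are nearly constant, so that comparing $\|\chi_F h\|_p$ with $\|\chi_F k\|_p$ reduces to comparing the values of $h$ and $k$.

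\emph{Step 1: localize.} Since $h$ is finite a.e., $G = \bigcup_{j,l} G_{j,l}$ (up to a null set) with
\[ G_{j,l} = G \cap \{ (1+\varepsilon)^{j} \le h < (1+\varepsilon)^{j+1}\} \cap \{ k \in I_l\}, \]
where $\varepsilon > 0$ is fixed small. The $I_l$ are either the set $\{0\}$ or intervals $[(1+\varepsilon)^l, (1+\varepsilon)^{l+1})$. Some $G_{j,l}$ has positive measure. Using $\sigma$-finiteness (or simply $h \in L^p$), we may also intersect with a set of finite measure. Call the resulting set $H$. On $H$ we have $\alpha \le h < (1+\varepsilon)\alpha$ with $\alpha > 0$, and either $k = 0$ or $\beta \le k < (1+\varepsilon)\beta$. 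Moreover $\alpha(1+\varepsilon) > h > 2ck$ on $H$.

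\emph{Step 2: apply the hypothesis.} By assumption there is $F \in \mathcal{F}$ with $F \subset H$ and $\mu(F) > 0$. Then
\[ \|\chi_F h\|_p \ge \alpha\,\mu(F)^{1/p}, \qquad \|\chi_F k\|_p \le (1+\varepsilon)\beta\,\mu(F)^{1/p} \]
(with $\beta = 0$ in the case $k = 0$). So the hypothesis gives $\alpha \le c(1+\varepsilon)\beta$.

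\emph{Step 3: derive the contradiction.} In the case $k = 0$ on $H$ we get $\alpha \le 0$, which is impossible. Otherwise we need a lower bound on $\alpha$ relative to $\beta$. Pick any point of $H$, where $h > 2ck \ge 2c\beta$, so $(1+\varepsilon)\alpha > 2c\beta$. Combined with $\alpha \le c(1+\varepsilon)\beta$, this yields $2c\beta < (1+\varepsilon)^2 c\beta$. For $\varepsilon$ small enough, say $(1+\varepsilon)^2 < 2$, this is a contradiction since $c\beta > 0$. Hence $\mu(G) = 0$.

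The only delicate point is the discretization in Step 1. We must ensure that the pieces are measurable and countable, so that one of them has positive measure, and that the constant $2$ absorbs the $(1+\varepsilon)^2$ loss. Both are routine, so the factor $2$ is just slack for the oscillation of $h$ and $k$ on the chosen piece.
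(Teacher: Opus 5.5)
Your proof is correct, but it takes a longer route than the paper, and the discretization in Step 1 can be dropped. The paper uses the same contradiction setup: take $F \in \mathcal{F}$ with $F \subset G = \{h > 2ck\}$ and $\mu(F) > 0$. It then compares norms directly on $F$, using only monotonicity of the $L^p$ norm. Since $h \geq 2ck$ pointwise on $F$,
\[ \|\chi_F h\|_p \geq 2c\|\chi_F k\|_p \geq 2\|\chi_F h\|_p . \]
Because $\|\chi_F h\|_p < \infty$, this forces $\chi_F h = 0$ a.e. That is impossible, since $h > 2ck \geq 0$ on $F$ and $\mu(F) > 0$.

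So in the paper the factor $2$ is not slack for the oscillation of $h$ and $k$. It makes the chain of inequalities collapse onto $\|\chi_F h\|_p$ itself. In fact, for $L^p$ with $p<\infty$ even $h \leq ck$ a.e. follows: on a set of positive measure where $h > ck$, the difference $\int_F (h^p - (ck)^p)\,d\mu$ of two finite integrals is strictly positive.

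Your localization to pieces on which $h$ and $k$ are nearly constant turns the comparison into one between numbers $\alpha$ and $\beta$. The cost is extra bookkeeping: choosing $\varepsilon$ with $(1+\varepsilon)^2 < 2$, treating the case $k = 0$ separately, and ensuring $0 < \mu(F) < \infty$ so you can divide by $\mu(F)^{1/p}$. It gains no generality, since both arguments rest on the same monotonicity of the norm.
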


\begin{proof}
Let $G = \{h > 2ck\}$ and assume $\mu(G) > 0$. Choose $F \in \mathcal{F}$, $F \subset G$, $\mu(F) > 0$. Then $\|\chi_F h\|_p \geq 2c\|\chi_F k\|_p \geq 2\|\chi_F h\|_p$, so $\chi_F h = 0$ a.e., which contradicts $h > 0$ on $F$.
\end{proof}

\begin{lemma} \label{l33}
Let $b = \sum_{|k| \leq N} M_{b_k}V^k \in \mathcal{B}_0$. Then $\chi_F b \chi_F = \chi_F M_{b_0}$ for every $N$-separated set $F$, and $\|M_{b_0}\| \leq 2\|b\|$. Consequently the map $N_0 : b \mapsto M_{b_0}$ is well defined on $\mathcal{B}_0$, and it extends to a bounded linear map $N_0 : \mathcal{B} \to \mathcal{A}_u$ such that $\|N_0\| \leq 2$ and $N_0(D) = D$ for $D \in \mathcal{A}_u$.
\end{lemma}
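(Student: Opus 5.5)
We first compute $\chi_F b\chi_F$ term by term, then bound $\|M_{b_0}\|$ using Lemma~\ref{l31} together with Lemma~\ref{l32}, and finally extend $N_0$ by continuity.

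\emph{Step 1: the identity $\chi_F b\chi_F = \chi_F M_{b_0}$.} By (\ref{eqrules}),
\[
\chi_F M_{b_k}V^k\chi_F = M_{\chi_F b_k}(\chi_F\circ\varphi^k)V^k = M_{\chi_F\,(\chi_F\circ\varphi^k)\,b_k}V^k .
\]
Here $\chi_F\circ\varphi^k = \chi_{\varphi^{-k}(F)}$. If $F$ is $N$-separated, then $\mu(F\cap\varphi^{-k}(F))=0$ for $1\le|k|\le N$, and the case of negative $k$ follows by applying $\varphi^{k}$ together with nonsingularity. So every term with $k\neq0$ vanishes, and since $\chi_F$ commutes with $M_{b_0}$ the identity follows. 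Note that this uses no aperiodicity.

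\emph{Step 2: the bound $\|M_{b_0}\|\le 2\|b\|$.} Fix $y\in Y$, a set $G$ of finite measure, and put $x=\chi_G\vec y$. Let $\mathcal F$ be the family of $N$-separated sets. By aperiodicity and the remark after Definition~\ref{daperiodic}, every set of positive measure contains a member of $\mathcal F$ of positive measure. For $F\in\mathcal F$, Step~1 gives
\[
\|\chi_F\, b_0(\cdot)x(\cdot)\|_p = \|\chi_F b\chi_F x\| \le \|b\|\,\|\chi_F x\|_p .
\]
Apply Lemma~\ref{l32} with $h=\|b_0(\cdot)x(\cdot)\|$, $k=\|x(\cdot)\|$ and $c=\|b\|$. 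It yields $\|b_0(\omega)y\|\le2\|b\|\|y\|$ for a.e.\ $\omega\in G$.

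To remove the dependence on $y$, we use norm-measurability of $b_0$. Approximate $b_0$ uniformly, off a null set, by a countably valued $b'=\sum_j\chi_{G_j}B_j$ with $\|b_0-b'\|_\infty<\delta$. On each $G_j$ of positive measure the inequality above gives $\|B_jy\|\le(2\|b\|+\delta)\|y\|$ for every $y$ (the exceptional null set now depends only on $y$, and $B_j$ is constant on $G_j$). Hence $\|b'\|_\infty\le 2\|b\|+\delta$, so $\|b_0\|_\infty\le2\|b\|+2\delta$. Letting $\delta\to0$ and invoking Lemma~\ref{l31} gives $\|M_{b_0}\|\le2\|b\|$.

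\emph{Step 3: well-definedness and extension.} If $b$ has two representations, their difference is a finite sum equal to $0$. Applying Step~2 to this difference with a common $N$ shows that the zero-order coefficients agree, so $N_0$ is well defined on $\mathcal B_0$. It is clearly linear, and by Step~2 its norm is at most $2$. Since $\mathcal A_u$ is closed (Lemma~\ref{l31}), $N_0$ extends uniquely to $\mathcal B\to\mathcal A_u$ with $\|N_0\|\le2$. For $D=M_d\in\mathcal A_u\subset\mathcal B_0$ we have $N_0(D)=D$ directly.

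The main obstacle is Step~2: passing from the estimate for each fixed vector $y$, whose null sets depend on $y$ and may be uncountable in number when $Y$ is nonseparable, to a uniform operator-norm bound. Norm-measurability via countably valued approximation is exactly what resolves this.
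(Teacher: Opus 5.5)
Your proof is correct. Steps 1 and 3 coincide with the paper's argument; Step 2 takes a different and longer route.

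\textbf{Step 2 in the paper.} The paper applies Lemma~\ref{l32} to an \emph{arbitrary} $x \in L^p(X)$, with $h = \|b_0(\cdot)x(\cdot)\|$, $k = \|b\|\,\|x(\cdot)\|$ and $c = 1$. This gives $\|b_0(\omega)x(\omega)\| \leq 2\|b\|\,\|x(\omega)\|$ a.e., and integrating yields $\|M_{b_0}x\| \leq 2\|b\|\,\|x\|$ at once. Because the target is an operator-norm bound on $L^p(X)$, the exceptional null set may depend on $x$. So the obstacle you single out never arises. Norm-measurability of $b_0$ is also not needed for this inequality, beyond measurability of $\omega \mapsto \|b_0(\omega)x(\omega)\|$.

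\textbf{Your Step 2.} You test only on $\chi_G\vec{y}$, so you must recover the uniform bound $\|b_0\|_\infty \leq 2\|b\|$ through a countably valued approximation. This essentially reruns the lower-bound half of the proof of Lemma~\ref{l31}. It yields the essential-supremum bound directly, but by Lemma~\ref{l31} that is equivalent to the operator bound, so the extra work buys nothing.

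\textbf{Two small points.}
\begin{enumerate}[(1)]
\item Lemma~\ref{l32} is stated for $c > 0$, so taking $c = \|b\|$ leaves the case $b = 0$ formally uncovered. Its proof works verbatim for $c = 0$, or you can put $\|b\|$ into $k$ and take $c = 1$, as the paper does.
\item In Step 1 the roles of the signs are swapped. The definition gives $\mu(F \cap \varphi^{k}(F)) = 0$ for $1 \leq k \leq N$ directly, and this is what is needed for \emph{negative} indices, since $\chi_F \circ \varphi^k = \chi_{\varphi^{-k}(F)}$. It is the positive indices that require applying $\varphi^{-k}$ together with nonsingularity.
\end{enumerate}
Neither point affects correctness.
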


\begin{proof}
By (\ref{eqrules}), $M_{b_k}V^k\chi_F = M_{b_k}(\chi_F \circ \varphi^k)V^k$, and $\chi_F \cdot (\chi_F \circ \varphi^k) = \chi_{F \cap \varphi^{-k}(F)} = 0$ a.e. for $0 < |k| \leq N$. Since multipliers commute with $\chi_F$, we obtain $\chi_F b \chi_F = \chi_F M_{b_0}$. Fix $x \in L^p(X)$ and put $h(\omega) = \|b_0(\omega)x(\omega)\|$, $k(\omega) = \|b\|\,\|x(\omega)\|$. For every $N$-separated $F$ we have $\|\chi_F h\|_p = \|\chi_F b \chi_F x\| \leq \|b\|\,\|\chi_F x\| = \|\chi_F k\|_p$. Because $\varphi$ is aperiodic, Lemma~\ref{l32} applies and gives $\|M_{b_0}x\| \leq 2\|b\|\,\|x\|$. The remaining statements follow from this inequality and Lemma~\ref{l31}.
\end{proof}

\begin{lemma} \label{l34}
If $D \in \mathcal{B}$ and $D\chi_F = \chi_F D$ for every $F \in \Sigma$, then $D = N_0(D) \in \mathcal{A}_u$.
\end{lemma}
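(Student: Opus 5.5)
The plan is to approximate $D$ by elements of $\mathcal{B}_0$ and show that $D - N_0(D)$ vanishes on every $N$-separated set, using Lemma~\ref{l33} together with the commutation hypothesis.

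Fix $\varepsilon > 0$ and choose $b = \sum_{|k| \leq N} M_{b_k}V^k \in \mathcal{B}_0$ with $\|D - b\| < \varepsilon$. By Lemma~\ref{l33}, $\|N_0(D) - M_{b_0}\| = \|N_0(D - b)\| \leq 2\varepsilon$. Let $F$ be any $N$-separated set. Since $D$ commutes with $\chi_F$ and $\chi_F$ is idempotent, $\chi_F D = \chi_F D \chi_F$. Lemma~\ref{l33} gives $\chi_F b \chi_F = \chi_F M_{b_0}$, and therefore
\[
\chi_F (D - M_{b_0}) = \chi_F (D - b)\chi_F ,
\]
whose norm is at most $\varepsilon$. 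Combining this with the estimate on $N_0(D) - M_{b_0}$ yields $\|\chi_F (D - N_0(D))\| \leq 3\varepsilon$ for every set $F$ that is $N$-separated. The difficulty is that $N$ depends on $\varepsilon$. This is harmless, however, because an $N$-separated set is also $N'$-separated for every $N' \leq N$, and by aperiodicity the family of $N$-separated sets has the covering property required by Lemma~\ref{l32} for each fixed $N$.

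Next I convert this into a pointwise estimate via Lemma~\ref{l32}. Fix $x \in L^p(X)$ and put $h(\omega) = \|(D - N_0(D))x(\omega)\|$ and $k(\omega) = \|x(\omega)\|$. For each $N$-separated $F$, note that $\chi_F(D - N_0(D)) x = \chi_F (D - N_0(D))\chi_F x$, because both $D$ and $N_0(D)$ commute with $\chi_F$ (the latter since it is a multiplier). Hence $\|\chi_F h\|_p \leq 3\varepsilon\,\|\chi_F k\|_p$. Lemma~\ref{l32}, applied with $\mathcal{F}$ the family of $N$-separated sets (which is admissible by aperiodicity), gives $h \leq 6\varepsilon k$ almost everywhere. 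Thus $\|(D - N_0(D))x\| \leq 6\varepsilon\|x\|$, and letting $\varepsilon \to 0$ yields $D = N_0(D) \in \mathcal{A}_u$.

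The only delicate point is this uniformity issue: $N$ depends on the approximation, so one cannot fix a single family of separated sets in advance. The argument handles it by fixing $\varepsilon$ first, then $N$, and applying Lemma~\ref{l32} separately for each $\varepsilon$. One should also check that $N_0(D)$ commutes with $\chi_F$, which holds because $N_0(D) \in \mathcal{A}_u$ and multiplication operators commute with scalar multipliers.
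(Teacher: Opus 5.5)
Your proof is correct and follows the paper's argument: approximate by an element of $\mathcal{B}_0$, use Lemma~\ref{l33} on $N$-separated sets to get the $3\varepsilon$ bound, and conclude with Lemma~\ref{l32}. The only cosmetic difference is that the paper first passes to $D' = D - N_0(D)$, so that $N_0(D') = 0$. You instead approximate $D$ itself and handle $\|N_0(D) - M_{b_0}\| \leq 2\varepsilon$ as a separate term.
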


\begin{proof}
Put $D' = D - N_0(D)$. Then $D' \in \mathcal{B}$, $D'$ commutes with every $\chi_F$, and $N_0(D') = 0$. Let $\varepsilon > 0$ and choose $b \in \mathcal{B}_0$ with $\|D' - b\| < \varepsilon$; let $N$ be such that $b = \sum_{|k| \leq N} M_{b_k}V^k$. Then $\|N_0(b)\| = \|N_0(b - D')\| \leq 2\varepsilon$. For an $N$-separated $F$ and $x \in L^p(X)$ we have, by Lemma~\ref{l33},
\[ \chi_F D'x = \chi_F D' \chi_F x = \chi_F(D' - b)\chi_F x + \chi_F N_0(b) x, \]
hence $\|\chi_F D'x\| \leq 3\varepsilon\|\chi_F x\|$. By Lemma~\ref{l32}, $\|(D'x)(\omega)\| \leq 6\varepsilon\|x(\omega)\|$ a.e., so $\|D'\| \leq 6\varepsilon$. Thus $D' = 0$.
\end{proof}

\begin{remark} \label{r31}
The map $N_0$ is an analogue of the von Neumann mapping (conditional expectation) $N(\sum a_g T_g) = a_e$ (see \cite[p.~61]{An} and \cite[Subsection 12.2, p.~166]{AL}), and the inequality $\|M_{b_0}\| \leq 2\|b\|$ of Lemma~\ref{l33} is an analogue of the inequality $\|\sum a_g T_g\| \geq \|a_e\|$. This inequality is (7.13) in \cite[Lemma 7.1, p.~59]{An}; in \cite[Subsection 12.2, p.~166]{AL} it is called property $(*)$. In the $C^\star$-setting it is derived from topological freeness of the action; see \cite[Lemma 7.2, p.~60]{An} and \cite[Theorem 12.14, p.~173]{AL}. Our proof uses $N$-separated sets in the same way as the proof of \cite[Lemma 7.2]{An} uses neighborhoods $W$ with $\alpha_g(W) \cap \alpha_h(W) = \emptyset$. Aperiodicity of $\varphi$ plays the role of topological freeness. For operators of multiplication by functions from $L^\infty$ the relation between the two conditions is discussed in \cite[Theorem 8.12]{An} (``metrically free'' actions; see also \cite[Notes to Section 12, p. 226]{AL}). Lemma~\ref{l34} is the counterpart of the fact that in the $C^\star$-setting an element is determined by its coefficients $N(bT_g^{-1})$ \cite[Theorem 8.11]{An}, \cite[Theorem 12.4]{AL}.
\end{remark}

\subsection{The commutation lemma}

\begin{lemma} \label{l35}
Under the assumptions of Theorem~\ref{t4}, $P$ commutes with the operators of multiplication by functions from $L^\infty(\mu)$.
\end{lemma}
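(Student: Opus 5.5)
We follow Step~1 of the proof of Corollary~\ref{c1}, which was written for an arbitrary K\"othe--Bochner space and never used separability of $X$ or the special form of the weight. Put $E_1 = PL^p(X)$ and $E_2 = (I-P)L^p(X)$. The first step is the dynamical description of these subspaces:
\[
E_1 = \{x : \|T^n x\| \to 0\}, \qquad E_2 = \{x : \|T^{-n}x\| \to 0\}, \qquad n \to \infty .
\]

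To prove it, note that $T|_{E_1}$ has spectrum $\sigma_1 \subset \mathds{U}$, so its spectral radius is $<1$ and $T^n x \to 0$ for $x \in E_1$. The restriction $T|_{E_2}$ is invertible with spectrum $\sigma_2$, which lies outside $\mathds{D}$. Hence $(T|_{E_2})^{-1}$ has spectral radius $<1$, and there are $C>0$, $s>1$ with $\|T^n y\| \geq C s^n \|y\|$ for $y \in E_2$. Now suppose $T^n x \to 0$ and put $y = (I-P)x$. Since $P$ commutes with $T$,
\[
C s^n \|y\| \leq \|(I-P)T^n x\| \leq \|I-P\|\,\|T^n x\| \to 0,
\]
which forces $y=0$, i.e.\ $x \in E_1$. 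The description of $E_2$ follows in the same way with the roles of $T$ and $T^{-1}$ exchanged, using that $P$ is also the spectral projection of $T^{-1}$ for the part $\sigma_1^{-1}$ of its spectrum lying outside $\mathds{D}$.

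The second step is the intertwining relation $T^n M_f = M_{f\circ\varphi^n} T^n$ for $f \in L^\infty(\mu)$ and $n \in \mathds{Z}$. For $n=1$ it holds because $T = M_A V$, scalar multipliers commute with $M_A$, and $V M_f = M_{f\circ\varphi} V$ by (\ref{eqrules}). Since $\varphi$ is nonsingular, $f\circ\varphi^n \in L^\infty(\mu)$ with $\|f\circ\varphi^n\|_\infty = \|f\|_\infty$. Induction gives the relation for $n \geq 1$, and multiplying by $T^{-n}$ on both sides gives it for negative $n$.

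From this, $\|T^n(fx)\| \leq \|f\|_\infty \|T^n x\|$ for all $n \in \mathds{Z}$. By the first step, $fE_1 \subset E_1$ and $fE_2 \subset E_2$. Therefore $fP = PfP$ and $f(I-P) = (I-P)f(I-P)$. The second identity gives $Pf(I-P) = 0$, i.e.\ $Pf = PfP = fP$.

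There is no real obstacle here. The only point to check is that the growth estimate on $E_2$ comes from the spectral radius of the inverse of the restriction, and for that the separation of $\sigma(T)$ by the unit circle is exactly what is needed.
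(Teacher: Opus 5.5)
Your proposal is correct and follows essentially the same route as the paper: you characterize $E_1=\{x:\|T^nx\|\to 0\}$ and $E_2=\{x:\|T^{-n}x\|\to 0\}$, and combine this with $T^n(gx)=(g\circ\varphi^n)T^nx$, $n\in\mathds{Z}$, to show that both spectral subspaces are invariant under $L^\infty(\mu)$. You only add detail that the paper leaves implicit, namely the explicit lower bound on $E_2$, the passage to $T^{-1}$ for the second characterization, and the derivation of the intertwining relation for negative $n$.
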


\begin{proof}
Let $E_1 = PL^p(X)$ and $E_2 = (I-P)L^p(X)$. Both are invariant under $T$, the spectral radius of $T|_{E_1}$ is less than $1$, and, since $T$ is invertible, so is the spectral radius of $T^{-1}|_{E_2}$. As in the proof of Corollary~\ref{c1},
\[ E_1 = \{x : \|T^n x\| \to 0\}, \qquad E_2 = \{x : \|T^{-n}x\| \to 0\} \quad (n \to \infty). \]
(If $\|T^n x\| \to 0$, then $(I-P)x = (T^{-1}|_{E_2})^n(I-P)T^n x \to 0$; the second equality is proved in the same way.) For $g \in L^\infty(\mu)$ we have $T^n(gx) = (g \circ \varphi^n)T^n x$ for all $n \in \mathds{Z}$, and therefore $\|T^n(gx)\| \leq \|g\|_\infty \|T^n x\|$. Hence $gE_1 \subset E_1$ and $gE_2 \subset E_2$, which means $gP = Pg$.
\end{proof}

The characterization of $E_1$ used above and the resulting invariance of $E_1$ under the coefficient algebra are the same as in Step~I of the proof of \cite[Theorem 7.2]{AL} and in the proof of \cite[Theorem 9.2, p.~81]{An}. There the invariance of $E_2$ is obtained by duality (\cite[Lemma 7.3]{AL}) or from invertibility of $a_1$ followed by an approximation argument; here it follows directly from the invertibility of $T$.

\subsection{The resolvent of $T$ belongs to $\mathcal{B}$}\label{ss34}

Elements of $L^\infty_u$ are equivalence classes. For each coefficient $b$ that occurs below we fix a representative with $\|b(\omega)\| \leq \|b\|_\infty$ for every $\omega \in \Omega$, and a countable set $\mathcal{D}_b \subset L(Y)$ such that $b(\omega) \in \overline{\mathcal{D}_b}$ for all $\omega$ outside a set of measure zero. (If $b^{(j)}$ are countably valued and $\|b - b^{(j)}\|_\infty \leq 1/j$, we can take for $\mathcal{D}_b$ the set of all values of all $b^{(j)}$.) Since $\varphi$ and $\varphi^{-1}$ map sets of measure zero onto sets of measure zero, every set $N$ of measure zero is contained in the $\varphi$-invariant set $\bigcup_{n \in \mathds{Z}} \varphi^n(N)$ of measure zero. All exceptional sets below are chosen $\varphi$-invariant in this way.

For $b = \sum_{|k| \leq L} M_{b_k}V^k$ and $\omega \in \Omega$ we define the \emph{orbit operator} $\pi_\omega(b)$ on $\ell^p(\mathds{Z};Y)$ by
\[ (\pi_\omega(b)\xi)_m = \sum_{|k| \leq L} b_k(\varphi^m(\omega))\,\xi_{m+k}, \quad m \in \mathds{Z}. \]
In other words, if $(S\xi)_n = \xi_{n+1}$ is the backward shift on $\ell^p(\mathds{Z};Y)$, then
\[ \pi_\omega(b) = \sum_{|k| \leq L} \operatorname{diag}\big(b_k(\varphi^n(\omega))\big)_{n \in \mathds{Z}}\, S^k, \]
that is, $\pi_\omega$ replaces the composition operator $V$ by $S$ and the multiplier $M_c$ by the diagonal operator $\operatorname{diag}(c(\varphi^n(\omega)))_n$, the values of $c$ along the orbit of $\omega$. Clearly $\|\pi_\omega(b)\| \leq \sum_k \|b_k\|_\infty$. The operators $\pi_\omega(b)$ are the measurable analogue of the trajectorial (orbital) representations $\pi_\tau$ of $C^\star$-algebras generated by dynamical systems; see \cite[Section 10, formulas (10.1)--(10.2) and Theorem 10.1]{An} and \cite[Section 21, formulas (21.1)--(21.2) and Theorem 21.2]{AL}. In the $C^\star$-setting $\pi_\tau$ is a $\star$-representation, so the invertibility of $\pi_\tau(b)$ for invertible $b$ and the bound $\|\pi_\tau(b)^{-1}\| \leq \|b^{-1}\|$ are automatic. In our situation these facts are given by Lemmas~\ref{l36} and \ref{l37}. The role of a uniform bound for the inverses of the orbit operators is discussed in \cite[Remark 10.1, p.~93]{An}. For an operator $C$ on $\ell^p(\mathds{Z};Y)$ we denote by $C[n,m] \in L(Y)$ its matrix entries, $C[n,m]y = (C(e_m \otimes y))_n$, where $e_m \otimes y$ is the sequence with $y$ in place $m$ and $0$ elsewhere.

\begin{lemma} \label{l36}
Let $1 \leq p < \infty$, let $Y$ be a Banach space, let $b = \sum_{|k| \leq L} M_{b_k}V^k$ with $b_k \in L^\infty_u(\Omega;L(Y))$, and assume that $\|bx\| \geq c\|x\|$ for all $x \in L^p(Y)$. Then there is a set $N$ of measure zero such that $\|\pi_\omega(b)\xi\| \geq c\|\xi\|$ for all $\omega \notin N$ and all $\xi \in \ell^p(\mathds{Z};Y)$.
\end{lemma}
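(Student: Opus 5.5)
The plan is to transfer the lower bound from $L^p(Y)$ to the orbit operators by testing $b$ on functions supported on a thin tower $F,\varphi(F),\dots$ built from an $N$-separated set on which the coefficients are almost constant. By density it suffices to prove $\|\pi_\omega(b)\xi\|\ge c\|\xi\|$ for finitely supported $\xi$ with entries in a fixed countable dense subset $\mathcal{Q}\subset Y$. This needs a little care if $Y$ is nonseparable, so we will instead argue by contradiction for each fixed finitely supported $\xi$ and exploit the separability of the essential ranges of the $b_k$.

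Suppose the set $B$ of $\omega$ where the inequality fails for some $\xi$ has positive outer measure. We want to reduce to a measurable set of positive measure on which a single ``pattern'' fails. Fix $\varepsilon>0$ and $M$ large, and let $W$ be the window $|m|\le M$ containing the support of $\xi$. Then $\pi_\omega(b)\xi$ depends only on the values $b_k(\varphi^m(\omega))$ for $|m|\le M+L$, and it depends on them continuously in operator norm. Since $b_k\circ\varphi^m$ takes values in $\overline{\mathcal{D}_{b_k}}$ off a ($\varphi$-invariant) null set, we partition $\Omega$ modulo a null set into countably many measurable sets $G_j$. On each $G_j$ every $b_k\circ\varphi^m$ ($|k|\le L$, $|m|\le M+L$) lies within $\varepsilon$ of a fixed operator $B_{k,m}^{(j)}\in\mathcal{D}_{b_k}$; this uses norm-measurability in an essential way.

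If the conclusion fails at some $\omega\in G_j$ with a vector $\xi$ supported in $W$, then the ``frozen'' operator $\Pi_j$, built from the $B^{(j)}_{k,m}$, satisfies $\|\Pi_j\xi\|<(c-\delta)\|\xi\|$ for small $\varepsilon$ (relative to a margin $\delta$). Taking $\varepsilon=1/n$ and collecting null exceptional sets, it then suffices to show that for each $j$ with $\mu(G_j)>0$ the frozen operator satisfies $\|\Pi_j\xi\|\ge (c-O(\varepsilon))\|\xi\|$ for all $\xi$ supported in $W$. Letting $\varepsilon\to0$ along a sequence, and defining $N$ as the union of the $\varphi$-saturations of the null sets, gives the lemma.

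The core step is the following. Let $\mu(G_j)>0$. Choose a $(2M+2L)$-separated subset $F\subset \varphi^{M}(G_j)$, shifted so that the tower $\varphi^{-m}(F)$, $|m|\le M$, is pairwise disjoint; adjust the indices so that $F'=\varphi^{-M}\dots$ works, with $0<\mu(F)<\infty$ (aperiodicity, Definition~\ref{daperiodic}). Given $\xi=(\xi_m)$ supported in $W$, define
\[ x=\sum_{|m|\le M}\chi_{\varphi^{-m}(F_0)}\,\rho_m\,\vec{\xi}_m, \]
where $F_0\subset G_j$ is the base and $\rho_m$ are the scalar normalizing factors coming from $V^m$ (products of $\rho^{1/p}$ along the orbit). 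These factors are chosen so that $(V^kx)$ restricted to the tower reproduces the shift $S^k$, and so that $\|x\|_p^p=\mu(F_0)\|\xi\|^p$. With this choice, computing $bx$ pointwise on the disjoint pieces of the tower via~(\ref{eqrules}) gives $\|bx\|\le \mu(F_0)^{1/p}(\|\Pi_j\xi\|+C\varepsilon\|\xi\|)$. Then $\|bx\|\ge c\|x\|$ yields $\|\Pi_j\xi\|\ge (c-C\varepsilon)\|\xi\|$.

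The main obstacle is the Radon--Nikodym weights. $\rho$ is not constant on $F_0$, so the factors $\rho^{1/p}\circ\varphi^i$ do not simply cancel. The first remedy to try is to define $x$ on $\varphi^{-m}(F_0)$ by transporting $\chi_{F_0}\vec{\xi}_m$ with the isometry $V^{-m}$ itself, i.e.\ $x=\sum_m V^{-m}(\chi_{F_0}\vec{\xi}_m)$ (up to the sign convention for $m$). Then $V^kx=\sum_m V^{k-m}(\chi_{F_0}\vec\xi_m)$, the isometry property handles all norms exactly, and the disjointness of the tower gives the $\ell^p$ sum exactly. The remaining work is checking the index bookkeeping against the definition of $\pi_\omega$.
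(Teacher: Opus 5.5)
Your proposal is correct and follows the paper's proof essentially step by step. The paper also covers $\Omega$, off a $\varphi$-invariant null set, by countably many pattern sets $G_\beta$ on which the $b_k\circ\varphi^m$, $|m|\le M+L$, are $\varepsilon$-close to fixed operators, and takes $N$ to be the union of the null pattern sets over $M$ and $\varepsilon=1/j$. It then proves the frozen estimate $\|\pi_\beta\xi\|\ge(c-(2L+1)\varepsilon)\|\xi\|$ for all $\xi$ supported in $[-M,M]$ simultaneously, testing $b$ on exactly your final function $f=\sum_{|n|\le M}V^{-n}(\chi_F\vec{\xi}_n)$ with $F\subset G_\beta$ separated. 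The paper takes $F$ $(2M+4L)$-separated; your $(2M+2L)$ already suffices.

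The index bookkeeping you left open works with no sign change. By $M_cV^{-m}=V^{-m}M_{c\circ\varphi^{m}}$ one gets $bf=\sum_{|m|\le M+L}V^{-m}(\chi_F g_m)$ with $g_m(\omega')=(\pi_{\omega'}(b)\xi)_m$. The contradiction framing and the explicit $\rho$-factors are therefore unnecessary.
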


\begin{proof}
Let $\mathcal{D} = \bigcup_k \mathcal{D}_{b_k}$, and let $N'$ be a set of measure zero such that $b_k(\varphi^m(\omega)) \in \overline{\mathcal{D}}$ for all $k$, $m$ and $\omega \notin N'$. Fix $M \in \mathds{N}$ and $\varepsilon > 0$. For every family $\beta = (\beta_{k,m})_{|k| \leq L,\, |m| \leq M+L}$ of elements of $\mathcal{D}$ let
\[ G_\beta = \{\omega \notin N' : \|b_k(\varphi^m(\omega)) - \beta_{k,m}\| \leq \varepsilon \text{ for } |k| \leq L, \ |m| \leq M+L\}. \]
Since the functions $b_k$ are norm-measurable, these sets are measurable; there are countably many of them, and they cover $\Omega \setminus N'$. Let $N_{M,\varepsilon}$ be the union of those $G_\beta$ that have measure zero, and let $N = N' \cup \bigcup_{M, j \in \mathds{N}} N_{M,1/j}$.

Assume $\mu(G_\beta) > 0$ and let $\xi \in \ell^p(\mathds{Z};Y)$ be supported in $[-M, M]$. Define $\pi_\beta\xi$ by $(\pi_\beta \xi)_m = \sum_{|k| \leq L}\beta_{k,m}\xi_{m+k}$ for $|m| \leq M+L$ and $(\pi_\beta\xi)_m = 0$ otherwise. We claim that
\begin{equation} \label{eqpattern}
\|\pi_\beta \xi\| \geq (c - (2L+1)\varepsilon)\|\xi\|.
\end{equation}
Indeed, choose a $(2M+4L)$-separated set $F \subset G_\beta$ with $0 < \mu(F) < \infty$ and put $f = \sum_{|n| \leq M} V^{-n}(\chi_F\vec{\xi}_n)$. The function $V^{-n}(\chi_F\vec{\xi}_n)$ vanishes outside $\varphi^n(F)$, these sets are pairwise disjoint up to sets of measure zero for $|n| \leq M + L$, and $V$ is an isometry; hence $\|f\|^p = \mu(F)\|\xi\|^p$. Put $\xi_j = 0$ for $|j| > M$. By (\ref{eqrules}), $V^mM_cV^{-m} = M_{c \circ \varphi^m}$, and therefore
\[ bf = \sum_{|k| \leq L}\sum_{|n| \leq M} M_{b_k}V^{k-n}(\chi_F\vec{\xi}_n) = \sum_{|m| \leq M+L} V^{-m}\big(\chi_F\, g_m\big), \]
where
\[ g_m(\omega') = \sum_{|k| \leq L} b_k(\varphi^m(\omega'))\,\xi_{m+k} = (\pi_{\omega'}(b)\xi)_m. \]
Moreover $(\pi_{\omega'}(b)\xi)_m = 0$ for $|m| > M+L$. Since the functions $V^{-m}(\chi_Fg_m)$ have disjoint supports and $V$ is an isometry,
\[ \|bf\|^p = \sum_{|m| \leq M+L}\int_F \|g_m(\omega')\|^p \, d\mu(\omega') = \int_F \|\pi_{\omega'}(b)\xi\|^p \, d\mu(\omega'). \]
For $\omega' \in G_\beta$ each coordinate of $(\pi_{\omega'}(b) - \pi_\beta)\xi$ has norm at most $\varepsilon\sum_{|k| \leq L}\|\xi_{m+k}\|$, hence $\|(\pi_{\omega'}(b) - \pi_\beta)\xi\| \leq (2L+1)\varepsilon\|\xi\|$. Consequently
\[ c\,\mu(F)^{1/p}\|\xi\| = c\|f\| \leq \|bf\| \leq \mu(F)^{1/p}\big(\|\pi_\beta\xi\| + (2L+1)\varepsilon\|\xi\|\big), \]
which proves (\ref{eqpattern}). Note that (\ref{eqpattern}) concerns a fixed finite family of operators and holds for all $\xi$ simultaneously.

Now let $\omega \notin N$, $M \in \mathds{N}$ and $\varepsilon = 1/j$. Then $\omega \in G_\beta$ for some $\beta$ with $\mu(G_\beta) > 0$, and by (\ref{eqpattern}) $\|\pi_\omega(b)\xi\| \geq (c - 2(2L+1)\varepsilon)\|\xi\|$ for every $\xi$ supported in $[-M,M]$. Letting $j \to \infty$ and $M \to \infty$, and using that finitely supported sequences are dense in $\ell^p(\mathds{Z};Y)$, we obtain the lemma.
\end{proof}

\begin{remark} \label{rinfty}
Let $L^\infty(Y) = L^\infty(\Omega, \Sigma, \mu; Y)$ be the space of essentially bounded strongly measurable functions, and let $V$ denote on $L^\infty(Y)$ the composition operator $x \mapsto x \circ \varphi$, i.e., (\ref{eqV}) with $1/p = 0$; it is an invertible isometry of $L^\infty(Y)$ and (\ref{eqrules}) holds. The proof of Lemma~\ref{l36} remains valid for $p = \infty$ if the integrals are replaced by essential suprema, but only for finitely supported $\xi$. Thus, if $b = \sum_{|k| \leq L} M_{b_k}V^k$ with $b_k \in L^\infty_u(\Omega;L(Y))$ and $\|bx\|_\infty \geq c\|x\|_\infty$ for all $x \in L^\infty(Y)$, then there is a set $N$ of measure zero such that $\|\pi_\omega(b)\xi\|_\infty \geq c\|\xi\|_\infty$ for all $\omega \notin N$ and all $\xi \in c_0(\mathds{Z};Y)$. Since finitely supported sequences are not dense in $\ell^\infty(\mathds{Z};Y)$, this argument does not give the estimate on $\ell^\infty(\mathds{Z};Y)$.
\end{remark}

\begin{lemma} \label{l37}
Let $d \in L^\infty_u(\Omega;L(X))$ and assume that $D = I - M_dV$ is invertible on $L^p(X)$, $1 \leq p < \infty$. Let $K = \|D^{-1}\|$. Then there is a $\varphi$-invariant set $N$ of measure zero such that for every $\omega \notin N$ the operator $D_\omega = \pi_\omega(D) = I - \operatorname{diag}(d(\varphi^n(\omega)))_n S$ is invertible on $\ell^p(\mathds{Z};X)$ and $\|D_\omega^{-1}\| \leq K$.
\end{lemma}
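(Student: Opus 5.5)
The plan is to deduce both halves of invertibility of $D_\omega$ from Lemma~\ref{l36}: injectivity with a lower bound from $D$ itself, and surjectivity from a dual operator of the same type.

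\emph{Lower bound.} Since $D$ is invertible, $\|Dx\| \geq K^{-1}\|x\|$ for all $x \in L^p(X)$. Also $D = I - M_dV$ is of the form treated in Lemma~\ref{l36}, with $L = 1$, $b_0 = 1$, $b_1 = -d$, $Y = X$. Lemma~\ref{l36} therefore gives a set $N_1$ of measure zero such that
\[ \|D_\omega\xi\| \geq K^{-1}\|\xi\| \quad \text{for } \omega \notin N_1,\ \xi \in \ell^p(\mathds{Z};X). \]
We replace $N_1$ by $\bigcup_n \varphi^n(N_1)$, as the paper does for all exceptional sets. This is harmless: $\pi_{\varphi(\omega)}(D) = S\,\pi_\omega(D)\,S^{-1}$, so the orbit operators along one orbit are isometrically conjugate. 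In particular, every $D_\omega$ with $\omega \notin N_1$ is injective with closed range. It remains to show that the range is dense.

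\emph{Dense range by duality, $1 < p < \infty$.} Let $1/p + 1/q = 1$. The space $L^q(X^\star)$ embeds isometrically and norming into $L^p(X)^\star$. Since $d$ is norm-measurable, so is $d^\star(\cdot) = d(\cdot)^\star \in L^\infty_u(\Omega;L(X^\star))$. Hence $M_d^\star$ leaves $L^q(X^\star)$ invariant and acts there as $M_{d^\star}$. A change of variables shows that $V^\star$ restricted to $L^q(X^\star)$ is the inverse of the $q$-normalized isometry from (\ref{eqV}). Therefore $D^\star$ restricted to $L^q(X^\star)$ is
\[ I - V^{-1}M_{d^\star} = I - M_{d^\star \circ \varphi^{-1}}V^{-1}, \]
again an element of $\mathcal{B}_0$ over $Y = X^\star$.

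Since $D^\star$ is invertible with $\|(D^\star)^{-1}\| = K$, it is bounded below by $K^{-1}$ on $L^q(X^\star)$. Applying Lemma~\ref{l36} with $p$ replaced by $q$ gives a null set $N_2$ such that $\pi_\omega(D^\star)$ is bounded below by $K^{-1}$ on $\ell^q(\mathds{Z};X^\star)$ for $\omega \notin N_2$. Checking the matrix entries, $\pi_\omega(D^\star)$ is exactly the Banach adjoint of $D_\omega$ restricted to $\ell^q(\mathds{Z};X^\star) \subset \ell^p(\mathds{Z};X)^\star$. Now let $\eta \in \ell^q(\mathds{Z};X^\star)$ annihilate the range of $D_\omega$. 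Then $\pi_\omega(D^\star)\eta = 0$, so $\eta = 0$. Since $\ell^q(\mathds{Z};X^\star)$ norms $\ell^p(\mathds{Z};X)$, no nonzero functional from this subspace vanishes on the closed subspace $\operatorname{ran} D_\omega$ unless that subspace is everything; hence the range is dense. With $N = N_1 \cup N_2$ made $\varphi$-invariant, $D_\omega$ is invertible and $\|D_\omega^{-1}\| \leq K$.

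\emph{Main obstacle: $p = 1$.} Here $q = \infty$, and Remark~\ref{rinfty} only yields the lower bound for $\pi_\omega(D^\star)$ on $c_0(\mathds{Z};X^\star)$. This is not enough, because the annihilator of $\operatorname{ran} D_\omega$ lives in $\ell^\infty(\mathds{Z};X^\star)$.

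What I would try first is to exploit the bidiagonal structure. Suppose $\eta$ is a bounded sequence with $\pi_\omega(D^\star)\eta = 0$. This means $\eta$ satisfies the one-step recurrence $\eta_n = d(\varphi^{n-1}(\omega))^\star \eta_{n-1}$, up to the indexing convention. Using the lower bound on $c_0$, and truncating $\eta$ to long windows $[-M,M]$ whose boundary defect involves only two entries, I would try to show that a nonzero bounded solution forces a uniform bound incompatible with $K$.

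If that fails, the fallback is to prove surjectivity directly. I would solve $D_\omega\zeta = \xi$ for finitely supported $\xi$ by transferring the solution $D^{-1}f$ of the function equation, with $f$ built as in the proof of Lemma~\ref{l36} on $(2M+4L)$-separated sets $F$. The goal is to show that its orbit restrictions converge in $\ell^1$, with norm at most $K\|\xi\|$, uniformly over the pattern sets $G_\beta$ of positive measure.
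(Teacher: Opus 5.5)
Your lower bound and your treatment of $1<p<\infty$ follow the paper's proof. One correction there: the annihilator step needs $\ell^p(\mathds{Z};X)^\star=\ell^q(\mathds{Z};X^\star)$, which holds for every $X$ and $1\le p<\infty$. It is not enough that $\ell^q(\mathds{Z};X^\star)$ is norming: $c_0\subset\ell^\infty=(\ell^1)^\star$ is norming, yet no nonzero element of $c_0$ annihilates the closed hyperplane $\{\xi\in\ell^1:\sum_n\xi_n=0\}$.

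That example is exactly the difficulty at $p=1$, and there your proposal has a genuine gap. The case $p=1$ is part of the statement, and you give no proof of it.

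Your first idea fails as described. Put $B_\omega=\pi_\omega(D^\star)$, so $(B_\omega\eta)_n=\eta_n-d(\varphi^{n-1}(\omega))^\star\eta_{n-1}$. If $B_\omega\eta=0$ with $\eta\in\ell^\infty(\mathds{Z};X^\star)$, then $B_\omega(\chi_{[a,b]}\eta)$ has only two nonzero entries: $\eta_a$ at $n=a$ and $-\eta_{b+1}$ at $n=b+1$. The $c_0$ bound therefore gives only $\max_{a\le n\le b}\|\eta_n\|\le K\max(\|\eta_a\|,\|\eta_{b+1}\|)$. Every sequence of constant norm satisfies this as soon as $K\ge1$; a sharp cutoff produces a defect as large as $\|\eta\|_\infty$.

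The fallback is not viable as sketched either. Orbits of points of $F$ may return to $F$ after more than $2M+4L$ steps. The orbit restrictions of $D^{-1}f$ then solve $D_{\omega'}\zeta=\tilde\xi$, where $\tilde\xi$ contains translated copies of $\xi$ at every return time. For a finite invariant $\mu$ there are infinitely many returns, so $\zeta\notin\ell^1$. Moreover, for nonseparable $X$ you would need surjectivity for all $\xi$ outside a single null set, and surjectivity does not pass through the finite patterns $\pi_\beta$ the way lower bounds do.

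What is missing is a way to turn the lower bound on $c_0(\mathds{Z};X^\star)$ from Remark~\ref{rinfty} into surjectivity on $\ell^1(\mathds{Z};X)$.

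The paper does this with the closed range theorem. $B_\omega$ is injective with closed range on $c_0(\mathds{Z};X^\star)$, so $B_\omega^\star$ maps $\ell^1(\mathds{Z};X^{\star\star})$ onto itself. Its restriction to $\ell^1(\mathds{Z};X)$ is $D_\omega$. A Goldstine--Hahn--Banach argument on finite windows, combined with closedness of the range of $D_\omega$, then brings the preimages down from $X^{\star\star}$ to $X$.

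Alternatively, your first idea works if you replace the sharp cutoff by the exponential weight of Lemma~\ref{l38}. Take $w_n=e^{-\gamma|n|}$ and $\Delta=\|d\|_\infty$. Then $w\eta\in c_0(\mathds{Z};X^\star)$ and $(B_\omega(w\eta))_n=(w_n/w_{n-1}-1)\,d(\varphi^{n-1}(\omega))^\star w_{n-1}\eta_{n-1}$. Hence $\|w\eta\|\le K\Delta(e^\gamma-1)\|w\eta\|$, and $\eta=0$ once $K\Delta(e^\gamma-1)<1$. Since $\ell^1(\mathds{Z};X)^\star=\ell^\infty(\mathds{Z};X^\star)$ and $D_\omega^\star=B_\omega$ on it, $D_\omega$ has dense closed range. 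This route is shorter than the paper's and avoids passing through $X^{\star\star}$.
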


\begin{proof}
By Lemma~\ref{l36} with $Y = X$, $\|D_\omega\xi\| \geq K^{-1}\|\xi\|$ for $\omega$ outside a set of measure zero.

Let $1/p + 1/q = 1$ (with $q = \infty$ if $p = 1$). Since $\mu$ is $\sigma$-finite, the space $L^q(X^\star)$ of strongly measurable functions is isometrically embedded in $L^p(X)^\star$ by $\langle x, g \rangle = \int \langle x(\omega), g(\omega)\rangle \, d\mu$. Changing the variable $\tau = \varphi(\omega)$ and using $\rho(\varphi^{-1}(\tau))^{-1} = d(\mu \circ \varphi^{-1})/d\mu\,(\tau)$, we obtain
\[ \langle M_dVx, g\rangle = \int \langle x(\tau), \rho(\varphi^{-1}(\tau))^{-1/q}\, d(\varphi^{-1}(\tau))^\star g(\varphi^{-1}(\tau))\rangle \, d\mu(\tau). \]
Hence $D^\star$ maps $L^q(X^\star)$ into itself and coincides there with $D' = I - M_{d'}V^{-1}$, where $V$ is the operator (\ref{eqV}) on $L^q(X^\star)$ (with $q$ in place of $p$; for $q = \infty$ see Remark~\ref{rinfty}) and $d'(\omega) = d(\varphi^{-1}(\omega))^\star$. The function $d'$ belongs to $L^\infty_u(\Omega;L(X^\star))$, and $\|D'g\| \geq K^{-1}\|g\|$ for $g \in L^q(X^\star)$.

Let first $1 < p < \infty$. By Lemma~\ref{l36} with $Y = X^\star$ and the exponent $q$, $\|\pi_\omega(D')\eta\| \geq K^{-1}\|\eta\|$ for $\eta \in \ell^q(\mathds{Z};X^\star)$ and $\omega$ outside a set of measure zero. On the other hand $\ell^p(\mathds{Z};X)^\star = \ell^q(\mathds{Z};X^\star)$, and
\begin{align*} \langle D_\omega\xi, \eta\rangle &= \sum_n \langle \xi_n - d(\varphi^n(\omega))\xi_{n+1}, \eta_n\rangle \\ &= \sum_n \langle \xi_n, \eta_n - d(\varphi^{n-1}(\omega))^\star\eta_{n-1}\rangle = \langle \xi, \pi_\omega(D')\eta\rangle, \end{align*}
that is, $D_\omega^\star = \pi_\omega(D')$. Thus $D_\omega$ is bounded below and $D_\omega^\star$ is injective, so $D_\omega$ is invertible and $\|D_\omega^{-1}\| \leq K$.

Let now $p = 1$, so that $q = \infty$. By Remark~\ref{rinfty} with $Y = X^\star$, there is a set of measure zero outside which the operator $B_\omega = \pi_\omega(D')$, $(B_\omega\eta)_n = \eta_n - d(\varphi^{n-1}(\omega))^\star\eta_{n-1}$, satisfies $\|B_\omega\eta\| \geq K^{-1}\|\eta\|$ for $\eta \in c_0(\mathds{Z};X^\star)$. Fix $\omega$ outside the union of the exceptional sets (which we again choose $\varphi$-invariant). Then $B_\omega$ is an injective operator on $c_0(\mathds{Z};X^\star)$ with closed range, and by the closed range theorem its adjoint $B_\omega^\star$ is a surjection of $c_0(\mathds{Z};X^\star)^\star = \ell^1(\mathds{Z};X^{\star\star})$ onto itself. The computation above shows that $(B_\omega^\star\xi)_n = \xi_n - d(\varphi^n(\omega))^{\star\star}\xi_{n+1}$; in particular, the restriction of $B_\omega^\star$ to $\ell^1(\mathds{Z};X)$ is $D_\omega$.

We show that $D_\omega$ maps $\ell^1(\mathds{Z};X)$ onto itself. Let $\zeta \in \ell^1(\mathds{Z};X)$ and $\varepsilon > 0$. Choose $\xi \in \ell^1(\mathds{Z};X^{\star\star})$ with $B_\omega^\star\xi = \zeta$, choose $M$ such that the truncation $\xi^{(M)}$ of $\xi$ to $[-M,M]$ satisfies $\|\xi - \xi^{(M)}\| \leq \varepsilon$, and put $W = [-M-1, M]$. Then $B_\omega^\star\xi^{(M)}$ vanishes outside $W$, and therefore $\|\zeta - \zeta^W\| \leq c\varepsilon$ and $\|B_\omega^\star\xi^{(M)} - \zeta^W\| \leq c\varepsilon$, where $\zeta^W$ is the truncation of $\zeta$ to $W$ and $c = \|B_\omega\|$. Let $C$ be the subspace of $\ell^1(W;X)$ consisting of all $D_\omega x$, where $x \in \ell^1(\mathds{Z};X)$ vanishes outside $[-M,M]$. We claim that the distance from $\zeta^W$ to $C$ does not exceed $c\varepsilon$. Otherwise, by the Hahn--Banach theorem there is $u = (u_n)_{n \in W}$, $u_n \in X^\star$, $\max_n\|u_n\| = 1$, such that $\langle y, u\rangle = 0$ for all $y \in C$ and $|\langle \zeta^W, u\rangle| > c\varepsilon$. By Goldstine's theorem (see \cite{Go} or \cite[Lemma 3.4, p.~69]{Br}), for every $n$, $|n| \leq M$, there is a net $(x^\alpha_n)$ in $X$ converging to $\xi_n$ in the topology $\sigma(X^{\star\star}, X^\star)$. Since the operators $d(\varphi^n(\omega))^{\star\star}$ are continuous in this topology and extend $d(\varphi^n(\omega))$, for $x^\alpha = (x^\alpha_n)_{|n| \leq M}$ we obtain
\[ 0 = \langle D_\omega x^\alpha, u\rangle \to \langle B_\omega^\star\xi^{(M)}, u\rangle. \]
Hence $|\langle \zeta^W, u\rangle| = |\langle \zeta^W - B_\omega^\star\xi^{(M)}, u\rangle| \leq c\varepsilon$, a contradiction. Consequently the distance from $\zeta$ to the range of $D_\omega$ does not exceed $2c\varepsilon$. Since $\varepsilon$ is arbitrary and the range of $D_\omega$ is closed, $\zeta$ belongs to the range of $D_\omega$. Thus $D_\omega$ is invertible and $\|D_\omega^{-1}\| \leq K$.
\end{proof}

\begin{lemma} \label{l38}
Let $C' = I - \operatorname{diag}(e_n)_n S$ be an operator on $\ell^p(\mathds{Z};X)$ with $\sup_n\|e_n\| \leq \Delta$, invertible with $\|(C')^{-1}\| \leq K$, and let $\gamma > 0$ be such that $(e^\gamma - 1)\Delta K \leq 1/2$. Then $C = (C')^{-1}$ satisfies $\|C[n,m]\| \leq 2Ke^{-\gamma|n-m|}$, $n, m \in \mathds{Z}$.
\end{lemma}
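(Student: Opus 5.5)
We use the standard exponential weighting (Combes--Thomas) trick. For real $t$ with $|t| \leq \gamma$, let $W_t$ be the diagonal operator $(W_t\xi)_n = e^{tn}\xi_n$ on $\ell^p(\mathds{Z};X)$. It is unbounded, but we only conjugate by it formally, working with matrix entries or on finitely supported sequences. Conjugation preserves diagonal operators, and $W_t S W_t^{-1} = e^{-t}S$, since $(W_tSW_t^{-1}\xi)_n = e^{tn}e^{-t(n+1)}\xi_{n+1}$. Hence
\[ C'_t := W_t C' W_t^{-1} = I - e^{-t}\operatorname{diag}(e_n)_n S = C' + (1 - e^{-t})\operatorname{diag}(e_n)_n S, \]
which is a bounded operator on $\ell^p(\mathds{Z};X)$. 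Its difference from $C'$ has norm at most $|1 - e^{-t}|\Delta \leq (e^{\gamma}-1)\Delta$ for $|t| \leq \gamma$, because $|1-e^{-t}| \leq e^{|t|}-1$.

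By the Neumann series, the hypothesis $(e^\gamma - 1)\Delta K \leq 1/2$ makes $C'_t$ invertible with
\[ \|(C'_t)^{-1}\| \leq \frac{K}{1 - (e^\gamma-1)\Delta K} \leq 2K. \]

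Next we identify $(C'_t)^{-1}$ with $W_t C W_t^{-1}$ at the level of matrix entries, i.e. we show $(C'_t)^{-1}[n,m] = e^{t(n-m)}C[n,m]$. Since conjugation by $W_t$ is formal, this requires an argument. We would use analyticity in $t$. The function $z \mapsto C'_z = C' + (1-e^{-z})\operatorname{diag}(e_n)_nS$ is entire and operator-valued, and $C'_z$ is invertible for complex $z$ in the strip $|\operatorname{Re} z| \leq \gamma$: there $|1-e^{-z}|$ may exceed $e^{\gamma}-1$, so instead we take a small disc around the real segment, where it is bounded by $e^\gamma - 1$ up to an arbitrarily small error (shrink $\gamma$ slightly if needed). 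Fix $n, m$ and $y \in X$. The functions
\[ F(z) = (C'_z)^{-1}[n,m]y \quad\text{and}\quad G(z) = e^{z(n-m)}C[n,m]y \]
are both analytic. For purely imaginary $z = is$, the operator $W_{is}$ is a bounded invertible isometry, so conjugation is legitimate there and $F(is) = G(is)$. By the identity theorem, $F = G$ on the connected domain, in particular for real $t \in [-\gamma,\gamma]$.

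Alternatively, one can avoid analyticity. Check directly that $W_tCW_t^{-1}$, applied to finitely supported vectors, is a left and right inverse of $C'_t$; this uses that $C'$ is bidiagonal, so $C'_tW_t\eta = W_tC'\eta$ for finitely supported $\eta$. This is the step I expect to need the most care, and I would present the analytic-continuation argument as the primary route.

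Finally, since $\|D[n,m]\| \leq \|D\|$ for any operator $D$ on $\ell^p(\mathds{Z};X)$, we get
\[ e^{t(n-m)}\|C[n,m]\| \leq 2K \quad \text{for all } |t| \leq \gamma. \]
Choosing $t = \gamma\operatorname{sgn}(n-m)$ gives $\|C[n,m]\| \leq 2Ke^{-\gamma|n-m|}$.
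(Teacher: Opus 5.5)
Your proof is correct, but it reaches the key step by a different route than the paper.

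\textbf{The paper's route.} The paper never conjugates by an unbounded operator. For fixed $m$ it uses the bounded weight $J=\operatorname{diag}(e^{-\gamma|n-m|})_n$, which satisfies $Je_m=e_m$. It also uses $C''=I-\operatorname{diag}(e_n\theta_n)_nS$ with $\theta_n=e^{\gamma(|n-m|-|n+1-m|)}\in\{e^{\pm\gamma}\}$. Then $C'J=JC''$ holds as an identity of bounded operators. The same Neumann-series estimate as yours gives $\|(C'')^{-1}\|\le 2K$. For $y=(C'')^{-1}(e_m\otimes x)$ one has $Jy\in\ell^p(\mathds{Z};X)$ and $C'(Jy)=e_m\otimes x$. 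Injectivity of $C'$ on $\ell^p$ then gives $Jy=C(e_m\otimes x)$, and the decay is read off directly.

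\textbf{Your route.} Your global weight $W_t=\operatorname{diag}(e^{tn})$ gives a cleaner conjugate $C'_t=I-e^{-t}\operatorname{diag}(e_n)_nS$, independent of $m$ and analytic in $t$. The price is the identification $(C'_t)^{-1}[n,m]=e^{t(n-m)}C[n,m]$, which you justify correctly by analytic continuation. The map $z\mapsto (C'_z)^{-1}$ is analytic on the connected component containing $0$ of $\{|1-e^{-z}|\Delta K<1\}$. This set contains all of $[-\gamma,\gamma]$, so no shrinking of $\gamma$ is needed, since $(e^\gamma-1)\Delta K\le 1/2<1$. It also contains an imaginary segment around $0$, where $W_{is}$ is an isometry and the identity holds genuinely.

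Your first remark about the whole strip is in fact true, since $C'_{t+is}=W_{is}C'_tW_{is}^{-1}$, but you do not need it. The paper's argument is shorter and purely elementary. Yours trades the $m$-dependent bounded weight for a little complex analysis.

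\textbf{The ``direct'' alternative.} As you sketched it, this route would not close without the paper's idea. For finitely supported $\eta$, the vector $W_tCW_t^{-1}\eta$ is not known to lie in $\ell^p(\mathds{Z};X)$, which is essentially the decay being proved. Moreover, $C'_t$ need not be injective on arbitrary sequences; for example, it is not when the $e_n$ are invertible. So the formal relation $C'_tW_t=W_tC'$ alone does not identify the inverse. The bounded weight $J$ is exactly what removes this circularity.
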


\begin{proof}
Fix $m$, let $J = \operatorname{diag}(e^{-\gamma|n-m|})_n$, and let $C'' = I - \operatorname{diag}(e_n\theta_n)_n S$, where $\theta_n = e^{\gamma(|n-m| - |n+1-m|)}$. Since $|\theta_n - 1| \leq e^\gamma - 1$, we have $\|C'' - C'\| \leq (2K)^{-1}$, so $C''$ is invertible and $\|(C'')^{-1}\| \leq 2K$. Comparing matrix entries we see that $C'J = JC''$. For $x \in X$ let $y = (C'')^{-1}(e_m \otimes x)$. Then $C'(Jy) = J(e_m \otimes x) = e_m \otimes x$, hence $Jy = C(e_m \otimes x)$, i.e., $C[n,m]x = e^{-\gamma|n-m|}y_n$, and $\|C[n,m]x\| \leq 2Ke^{-\gamma|n-m|}\|x\|$.
\end{proof}

The conjugation by exponential weights in the proof of Lemma~\ref{l38} is a discrete version of the passage from $b = \sum a_g \otimes g$ to $b(\xi) = \sum e^{\langle \xi, g\rangle} a_g \otimes g$ in the proof of \cite[Theorem 26.10, pp.~376--377]{AL}. It is also the idea of the Combes--Thomas estimate for resolvents of Schr\"odinger operators \cite{CT}. For banded matrices on $\ell^2$, exponential decay of the entries of the inverse is due to Demko, Moss and Smith \cite{DMS}. Lemma~\ref{l38} is a simple version of these results for operator-valued matrices on $\ell^p(\mathds{Z};X)$.

\begin{lemma} \label{l39}
Let $C'_1 = I - \operatorname{diag}(e_n)_nS$ and $C'_2 = I - \operatorname{diag}(e'_n)_nS$ both satisfy the assumptions of Lemma~\ref{l38} with the same $\Delta$, $K$, $\gamma$, and let $C_i = (C'_i)^{-1}$. If $\|e_n - e'_n\| \leq \eta$ for $|n| \leq M$, then for every $k \in \mathds{Z}$
\[ \|C_1[0,k] - C_2[0,k]\| \leq 4K^2\Big(\eta s_\gamma + \frac{4\Delta e^{-\gamma M}}{1 - e^{-\gamma}}\Big), \quad s_\gamma = \sum_{n \in \mathds{Z}} e^{-\gamma|n|}. \]
\end{lemma}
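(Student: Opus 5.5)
We use the resolvent identity $C_1 - C_2 = C_1(C'_2 - C'_1)C_2$ and compute the $(0,k)$ entry. Since $C'_2 - C'_1 = \operatorname{diag}(e_n - e'_n)_n S$, and $S$ shifts indices by one, the matrix of $C'_2 - C'_1$ has a single nonzero entry in each row: $(C'_2 - C'_1)[n,n+1] = e_n - e'_n$. Hence
\[
C_1[0,k] - C_2[0,k] = \sum_{n \in \mathds{Z}} C_1[0,n]\,(e_n - e'_n)\,C_2[n+1,k].
\]
Before using this, we justify the expansion of matrix entries. The operators $C_1$, $C_2$ and $C'_i$ are bounded on $\ell^p(\mathds{Z};X)$, and by Lemma~\ref{l38} the entries of $C_i$ decay exponentially. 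Therefore the series for the $(0,k)$ entry of a product converges absolutely in $L(X)$. To see this, apply $C_1(C'_2-C'_1)C_2$ to $e_k \otimes x$ and read off the $0$-th coordinate. The coordinate functional is continuous and $C_1$ acts row-wise by absolutely convergent sums.

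We split the sum into the ranges $|n| \leq M$ and $|n| > M$.

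For $|n| \leq M$ we use $\|e_n - e'_n\| \leq \eta$ together with Lemma~\ref{l38} applied to $C_1$, which gives $\|C_1[0,n]\| \leq 2Ke^{-\gamma|n|}$. For the other factor we use only the crude bound $\|C_2[n+1,k]\| \leq \|C_2\| \leq K$; a single matrix entry is bounded by the operator norm. This gives a contribution of at most $2K^2\eta\,s_\gamma$, which is within the stated bound $4K^2\eta s_\gamma$.

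For $|n| > M$ we use $\|e_n - e'_n\| \leq 2\Delta$, $\|C_1[0,n]\| \leq 2Ke^{-\gamma|n|}$ and $\|C_2[n+1,k]\| \leq K$. Summing over $|n| > M$ gives
\[
2\sum_{n>M} e^{-\gamma n} = \frac{2e^{-\gamma(M+1)}}{1-e^{-\gamma}} \leq \frac{2e^{-\gamma M}}{1-e^{-\gamma}},
\]
so the tail contributes at most $2K\cdot 2\Delta\cdot K\cdot 2e^{-\gamma M}/(1-e^{-\gamma}) = 8K^2\Delta e^{-\gamma M}/(1-e^{-\gamma})$. This is below the stated $16K^2\Delta e^{-\gamma M}/(1-e^{-\gamma})$.

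Adding the two contributions gives the claimed estimate, which therefore holds with room to spare.

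The only delicate point is the justification of the entrywise product formula in $L(X)$ for an arbitrary Banach space $X$. In particular, for $p=1$ one should avoid duality and argue directly with vectors $e_k\otimes x$. The formula follows from $(C_1 z)_0 = \sum_n C_1[0,n] z_n$ for $z \in \ell^p(\mathds{Z};X)$, which holds first for finitely supported $z$. It extends to all $z$ by continuity, since $\sum_n \|C_1[0,n]\| < \infty$ and $\sup_n\|z_n\| \le \|z\|$.
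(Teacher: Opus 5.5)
Your proof is correct and follows the paper's argument. You use the same resolvent identity with the factors in the opposite order (the paper writes $C_1 - C_2 = C_2(C'_2-C'_1)C_1$), the same absolutely convergent entrywise expansion justified by Lemma~\ref{l38}, and the same split at $|n| = M$; the only change is that you bound the second inverse's entry by $\|C_2\| \le K$ instead of by $2K$ from Lemma~\ref{l38}, which yields the slightly sharper constant $2K^2$ in place of $4K^2$.
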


\begin{proof}
We have $C_1 - C_2 = C_2(C'_2 - C'_1)C_1$ and $C'_2 - C'_1 = \operatorname{diag}(e_n - e'_n)_nS$. By Lemma~\ref{l38} the map $\xi \mapsto (C_2\xi)_0 = \sum_n C_2[0,n]\xi_n$ is given by an absolutely convergent series, hence $(C_1 - C_2)[0,k] = \sum_n C_2[0,n](e_n - e'_n)C_1[n+1,k]$, and
$\|(C_1 - C_2)[0,k]\| \leq 4K^2\sum_n e^{-\gamma|n|}\|e_n - e'_n\|$. It remains to split the sum into $|n| \leq M$ and $|n| > M$, where $\|e_n - e'_n\| \leq 2\Delta$.
\end{proof}

\begin{proposition} \label{p310}
Let $d \in L^\infty_u(\Omega;L(X))$ and assume that $D = I - M_dV$ is invertible on $L^p(X)$, $1 \leq p < \infty$. Then there are $c_k \in L^\infty_u(\Omega;L(X))$ and constants $C, \gamma > 0$ such that $\|c_k\|_\infty \leq Ce^{-\gamma|k|}$ and $D^{-1} = \sum_{k \in \mathds{Z}} M_{c_k}V^k$, the series converging absolutely in operator norm. In particular $D^{-1} \in \mathcal{B}$.
\end{proposition}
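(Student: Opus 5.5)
By Lemma~\ref{l37} there is a $\varphi$-invariant null set $N$ such that, for $\omega \notin N$, the orbit operator $D_\omega = I - \operatorname{diag}(d(\varphi^n(\omega)))_nS$ is invertible with $\|D_\omega^{-1}\| \leq K = \|D^{-1}\|$. Put $\Delta = \|d\|_\infty$ (using the fixed representative with $\|d(\omega)\| \leq \Delta$ everywhere), and choose $\gamma > 0$ with $(e^\gamma - 1)\Delta K \leq 1/2$. For $\omega \notin N$ and $k \in \mathds{Z}$ we define the candidate coefficients
\[ c_k(\omega) = D_\omega^{-1}[0,k], \]
and set $c_k = 0$ on $N$. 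Lemma~\ref{l38} gives $\|c_k(\omega)\| \leq 2Ke^{-\gamma|k|}$ for every $\omega$, which is the required bound with $C = 2K$.

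\textbf{Norm-measurability.} Next I show $c_k \in L^\infty_u(\Omega;L(X))$. This is where Lemma~\ref{l39} is used. Fix $\varepsilon > 0$ and choose $M$ so that the tail term $4K^2\cdot 4\Delta e^{-\gamma M}/(1-e^{-\gamma})$ is less than $\varepsilon/2$, and $\eta$ with $4K^2\eta s_\gamma < \varepsilon/2$. Approximate $d$ uniformly (off a null set) within $\eta$ by a countably valued function $d' = \sum_j \chi_{G_j}B_j$. The finitely many functions $\omega \mapsto d'(\varphi^n(\omega))$, $|n| \leq M$, are countably valued and measurable. So $\Omega \setminus N$ splits into countably many measurable pieces $H_\beta$ on which the vector $(d'(\varphi^n(\omega)))_{|n|\leq M}$ is constant. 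Pick $\omega_\beta \in H_\beta$. For $\omega \in H_\beta$ the sequences $e_n = d(\varphi^n(\omega))$ and $e'_n = d(\varphi^n(\omega_\beta))$ differ by at most $2\eta$ for $|n| \leq M$. Both orbit operators satisfy the hypotheses of Lemma~\ref{l38} with the same $\Delta, K, \gamma$. Lemma~\ref{l39} (with $2\eta$) therefore gives $\|c_k(\omega) - c_k(\omega_\beta)\| \leq 2\varepsilon$ on $H_\beta$. Hence $c_k$ is, off a null set, a uniform limit of countably valued measurable functions, so $c_k \in L^\infty_u$.

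\textbf{Identifying the series with $D^{-1}$.} The series $R = \sum_k M_{c_k}V^k$ converges absolutely in operator norm, since $\|M_{c_k}V^k\| = \|c_k\|_\infty \leq 2Ke^{-\gamma|k|}$ by Lemma~\ref{l31}. Also $R \in \mathcal{B}$. I would first note the covariance of the orbit inverses: $D_{\varphi^m(\omega)}$ is the conjugate of $D_\omega$ by the shift $S^m$. Therefore $D_\omega^{-1}[m,m+k] = D_{\varphi^m(\omega)}^{-1}[0,k] = c_k(\varphi^m(\omega))$, and since $N$ is invariant, $\pi_\omega(R) = D_\omega^{-1}$ formally; $\pi_\omega$ extends to absolutely convergent series with the same entrywise formula. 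Then I compute $DR = R - M_dVR$ directly using (\ref{eqrules}):
\[ DR = \sum_k M_{c_k - d\,(c_{k-1}\circ\varphi)}V^k. \]
The identity $D_\omega D_\omega^{-1} = I$, read at the matrix entry $[0,k]$, gives
\[ c_k(\omega) - d(\omega)c_{k-1}(\varphi(\omega)) = \delta_{k0}I \]
for $\omega \notin N$. Hence $DR = I$, and since $D$ is invertible, $R = D^{-1}$.

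\textbf{Main obstacle.} The delicate step is the measurability argument. One must check that the partition pieces are measurable, which requires nonsingularity of $\varphi$ so that $d' \circ \varphi^n$ is still countably valued and approximates $d \circ \varphi^n$ a.e. One must also check that the uniform constants $K, \Delta, \gamma$ hold simultaneously at $\omega$ and $\omega_\beta$; this is exactly why Lemma~\ref{l37} provides the bound $K$ for all $\omega \notin N$ rather than almost everywhere separately for each piece. The identity $DR = I$ should be routine once the entrywise relation is written out.
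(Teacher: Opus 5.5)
Your proposal is correct and follows essentially the same route as the paper. The paper also takes $c_k(\omega) = D_\omega^{-1}[0,k]$, bounds it by Lemma~\ref{l38}, gets norm-measurability from Lemma~\ref{l39} on a countable partition, and derives $DR = I$ from the $[0,k]$ entry of $D_\omega D_\omega^{-1} = I$ using the covariance $D_{\varphi(\omega)} = SD_\omega S^{-1}$. The only difference is that you build the partition from a countably valued approximant $d'$ rather than from $\eta/2$-balls around the countable set $\mathcal{D}_d$. In your version, choose the points $\omega_\beta$ (and the pieces) outside the null set where $\|d(\varphi^n(\omega)) - d'(\varphi^n(\omega))\| > \eta$ for some $|n| \leq M$; the paper's construction avoids this automatically.
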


\begin{proof}
Let $N$, $K$ and $D_\omega$ be as in Lemma~\ref{l37}, $\Delta = \|d\|_\infty$, and $\gamma$ as in Lemma~\ref{l38}. For $\omega \notin N$ let $C_\omega = D_\omega^{-1}$ and $c_k(\omega) = C_\omega[0,k]$; for $\omega \in N$ let $c_k(\omega) = 0$. Thus the coefficients of $D^{-1}$ are read off from the matrix entries of the orbit operators; this is the counterpart of formula (8.10) in \cite[Theorem 8.11, p.~73]{An}, which expresses the matrix entries of an element in the regular representation through its coefficients.

(i) By Lemma~\ref{l38}, $\|c_k(\omega)\| \leq 2Ke^{-\gamma|k|}$ for every $\omega$.

(ii) A direct computation shows that $D_{\varphi(\omega)}[n,m] = D_\omega[n+1,m+1]$, that is, $D_{\varphi(\omega)} = SD_\omega S^{-1}$. (For points of the same orbit the orbit operators are similar; cf.\ \cite[p.~93]{An} and \cite[Remark 21.4]{AL}.) Hence $C_{\varphi(\omega)} = SC_\omega S^{-1}$, and since $N$ is $\varphi$-invariant, $C_\omega[1,k] = c_{k-1}(\varphi(\omega))$ for $\omega \notin N$. The entry $[0,k]$ of the identity $D_\omega C_\omega = I$ therefore gives
\begin{equation} \label{eqcoef}
c_k(\omega) - d(\omega)c_{k-1}(\varphi(\omega)) = \delta_{k0}I, \quad \omega \notin N.
\end{equation}

(iii) $c_k \in L^\infty_u(\Omega;L(X))$. Let $\delta > 0$. Choose $M$ and $\eta > 0$ such that the right-hand side of the inequality in Lemma~\ref{l39} is at most $\delta$. For every family $\beta = (\beta_n)_{|n| \leq M}$ of elements of $\mathcal{D}_d$ let $H_\beta = \{\omega \notin N : \|d(\varphi^n(\omega)) - \beta_n\| \leq \eta/2 \text{ for } |n| \leq M\}$. These countably many measurable sets cover $\Omega \setminus N$ up to a set of measure zero. Enumerate them, make them pairwise disjoint, choose a point $\omega_j$ in each nonempty set $H'_j$ of the resulting partition, and put $g = c_k(\omega_j)$ on $H'_j$. If $\omega \in H'_j$, then $\|d(\varphi^n(\omega)) - d(\varphi^n(\omega_j))\| \leq \eta$ for $|n| \leq M$, and by Lemma~\ref{l39} applied to $D_\omega$ and $D_{\omega_j}$ we get $\|c_k(\omega) - g(\omega)\| \leq \delta$. Thus $c_k$ is, outside a set of measure zero, a uniform limit of countably valued measurable functions, and by (i) $c_k \in L^\infty_u(\Omega;L(X))$.

(iv) By (i), (iii) and Lemma~\ref{l31}, the series $R = \sum_k M_{c_k}V^k$ converges absolutely in $L(L^p(X))$, so $R \in \mathcal{B}$. By (\ref{eqrules}) and (\ref{eqcoef}),
\[ DR = \sum_k M_{c_k}V^k - \sum_k M_{d\,(c_{k-1}\circ\varphi)}V^k = \sum_k M_{c_k - d\,(c_{k-1}\circ\varphi)}V^k = I. \]
Since $D$ is invertible, $R = D^{-1}$.
\end{proof}

\begin{remark} \label{r32}
Proposition~\ref{p310} is an $L^p(X)$-analogue of the Wiener-type theorems of \cite[Section 26]{AL}. By \cite[Theorem 26.12]{AL} (due to Kurbatov and Lebedev), if $G \cong \mathds{Z}$ and the action is topologically free, then the elements of $C^\star(A,T)$ with exponentially decreasing coefficients form a full (inverse closed) subalgebra. In particular, the inverse of an invertible finite sum $\sum a_k T^k$ has exponentially decreasing coefficients. The proof in \cite{AL} is based on the Bochner--Phillips theorem and on the $C^\star$-structure, and does not apply in our setting.
\end{remark}

\subsection{The spectral projection and its representative}

\begin{corollary} \label{c311}
Under the assumptions of Theorem~\ref{t4}, $P \in \mathcal{B}$.
\end{corollary}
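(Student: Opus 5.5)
The plan is to write $P$ as a Riesz integral over the unit circle and show that the integrand is a continuous $\mathcal{B}$-valued function of $\lambda$. Since $\sigma(T) = \sigma_1 \cup \sigma_2$ with $\sigma_1 \subset \mathds{U}$ and $\sigma_2 \subset \mathds{C}\setminus\mathds{D}$, the circle $\mathds{T}$ separates $\sigma_1$ from $\sigma_2$ and lies in the resolvent set. Therefore
\[ P = \frac{1}{2\pi i}\int_{\mathds{T}} (\lambda - T)^{-1}\, d\lambda, \]
where the integral is a norm-convergent Riemann integral of a norm-continuous $L(L^p(X))$-valued function. Because $\mathcal{B}$ is a closed subspace of $L(L^p(X))$, it suffices to show that $(\lambda - T)^{-1} \in \mathcal{B}$ for every $\lambda \in \mathds{T}$. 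The Riemann sums then lie in $\mathcal{B}$ and converge in norm to $P$.

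Fix $\lambda \in \mathds{T}$. We factor
\[ \lambda - T = \lambda\,(I - \lambda^{-1}M_AV) = \lambda\,(I - M_dV), \qquad d = \lambda^{-1}A \in L^\infty_u(\Omega;L(X)). \]
Since $\lambda \notin \sigma(T)$, the operator $D = I - M_dV$ is invertible on $L^p(X)$. Proposition~\ref{p310} then gives $D^{-1} \in \mathcal{B}$. It follows that $(\lambda - T)^{-1} = \lambda^{-1}D^{-1} \in \mathcal{B}$, because $\mathcal{B}$ is a linear subspace. The integrand is continuous in norm by the standard continuity of the resolvent, so the integral argument applies.

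There is essentially no obstacle at this point, since all the difficulty has been absorbed into Proposition~\ref{p310}. The only point that needs care is that Proposition~\ref{p310} requires $d$ to be norm-measurable. Multiplying $A$ by the scalar constant $\lambda^{-1}$ preserves membership in $L^\infty_u(\Omega;L(X))$, so this condition holds. One also has to identify $M_dV$ correctly as $\lambda^{-1}T$: this holds because $T = M_AV$ with the normalized $V$ of (\ref{eqV}), exactly as in the statement of Theorem~\ref{t4}.
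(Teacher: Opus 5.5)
Your proposal is correct and follows essentially the same route as the paper's proof. Both write $(\lambda - T)^{-1} = \lambda^{-1}(I - M_{A/\lambda}V)^{-1} \in \mathcal{B}$ for $|\lambda| = 1$ via Proposition~\ref{p310}, and then use norm continuity of the resolvent and closedness of $\mathcal{B}$ to conclude that the Riemann sums of the Riesz integral converge in $\mathcal{B}$ to $P$.
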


\begin{proof}
If $|\lambda| = 1$, then $\lambda \notin \sigma(T)$, and $(\lambda - T)^{-1} = \lambda^{-1}(I - M_{A/\lambda}V)^{-1} \in \mathcal{B}$ by Proposition~\ref{p310}. The map $\lambda \mapsto (\lambda - T)^{-1}$ is norm continuous on $\mathds{T}$, so the Riemann sums of
\[ P = \frac{1}{2\pi i}\oint_{\mathds{T}} (\lambda - T)^{-1}\, d\lambda \]
belong to $\mathcal{B}$ and converge to $P$ in norm. Since $\mathcal{B}$ is closed, $P \in \mathcal{B}$ (cf.\ \cite[p.~81]{An}).
\end{proof}

\begin{lemma} \label{l312}
Let $b \in L^\infty_u(\Omega;L(X))$ and assume that $M_b$ is a projection on $L^p(X)$. Then $b$ has a representative such that $b(\omega)^2 = b(\omega)$ and $\|b(\omega)\| \leq \|M_b\|$ for every $\omega \in \Omega$.
\end{lemma}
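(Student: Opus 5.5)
The plan is to pass from the operator identity $M_b^2 = M_b$ to a pointwise identity almost everywhere, and then redefine $b$ on a null set. By Lemma~\ref{l31} we have $\|M_{b^2-b}\| = \|b^2-b\|_\infty$, where $b^2$ is the pointwise product in $L^\infty_u(\Omega;L(X))$. Since $M_b^2 = M_{b^2}$, the relation $M_b^2=M_b$ gives $\|b^2 - b\|_\infty = 0$. Hence there is a set $N_1$ of measure zero such that $b(\omega)^2 = b(\omega)$ for $\omega\notin N_1$. Similarly, Lemma~\ref{l31} gives $\|b\|_\infty = \|M_b\|$. So there is a null set $N_2$ with $\|b(\omega)\|\le\|M_b\|$ for $\omega\notin N_2$.

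Next, put $N = N_1\cup N_2$ and redefine $b$ on $N$. Choosing $b(\omega)=0$ for $\omega \in N$ is the simplest option: $0$ is a projection and $\|0\|\le \|M_b\|$. The modified function agrees with $b$ almost everywhere, so it is still a representative of the same class in $L^\infty_u(\Omega;L(X))$. Because $\mu$ is complete, measurability is not affected.

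The only point needing care is that Lemma~\ref{l31} is stated for $L(Y)$-valued functions with $Y$ arbitrary; here we apply it with $Y = X$. We also need that $b^2$ is norm-measurable, i.e., that $L^\infty_u$ is an algebra under pointwise product, which the paper asserts. I do not expect a real obstacle; the essential content, the equality $\|M_c\|=\|c\|_\infty$, is already Lemma~\ref{l31}.
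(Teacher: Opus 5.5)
Your proposal is correct and follows essentially the same route as the paper: you use Lemma~\ref{l31} to get $\|b\|_\infty = \|M_b\|$ and $\|b^2-b\|_\infty = \|M_b^2 - M_b\| = 0$, and then set $b(\omega)=0$ on the exceptional null set. One small remark: completeness of $\mu$ is not needed at the last step, since an a.e.\ modification of a strongly measurable function is strongly measurable anyway.
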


\begin{proof}
By Lemma~\ref{l31}, $\|b\|_\infty = \|M_b\|$, and $M_{b^2 - b} = M_b^2 - M_b = 0$ gives $\|b^2 - b\|_\infty = 0$. Hence the set of all $\omega$ such that $b(\omega)^2 \neq b(\omega)$ or $\|b(\omega)\| > \|M_b\|$ is contained in a set of measure zero, and it suffices to put $b(\omega) = 0$ on this set.
\end{proof}

Theorem~\ref{t4} is stated for the splitting of $\sigma(T)$ by the unit circle. By scaling, it applies to every circle centered at $0$ that does not meet $\sigma(T)$, and, since $\sigma(T)$ is rotation invariant, to every clopen subset of $\sigma(T)$.

\begin{corollary} \label{c313}
Let $(\Omega, \Sigma, \mu)$, $p$, $\varphi$, $X$, $A$ and $T = M_AV$ be as in Theorem~\ref{t4}, and assume that $T$ is invertible.
\begin{enumerate}[(a)]
\item If $r > 0$ and $\sigma(T) \cap \{|\lambda| = r\} = \emptyset$, then the conclusion of Theorem~\ref{t4} holds for the spectral projection $\Pi_r$ of $T$ corresponding to $\sigma(T) \cap \{|\lambda| < r\}$.
\item The spectrum $\sigma(T)$ is rotation invariant, and the conclusion of Theorem~\ref{t4} holds for the spectral projection $P'$ of $T$ corresponding to an arbitrary nonempty clopen subset $\sigma'$ of $\sigma(T)$.
\end{enumerate}
\end{corollary}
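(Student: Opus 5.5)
For part (a) I will reduce to Theorem~\ref{t4} by scaling. Put $T_r = r^{-1}T = M_{A/r}V$. Then $A/r \in L^\infty_u(\Omega;L(X))$, $T_r$ is invertible, and $\sigma(T_r) = r^{-1}\sigma(T)$. Hence $\sigma(T_r)$ does not meet $\mathds{T}$ and splits as $\sigma_1 = r^{-1}(\sigma(T)\cap\{|\lambda|<r\}) \subset \mathds{U}$ and $\sigma_2 \subset \mathds{C}\setminus\mathds{D}$. The Riesz projection of $T_r$ for $\sigma_1$ coincides with $\Pi_r$. This follows from the change of variable $\lambda = r\zeta$ in the contour integral, since $(\zeta - T_r)^{-1} = r(r\zeta - T)^{-1}$. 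Theorem~\ref{t4} applied to $T_r$ then gives (a) directly. One degenerate case needs a separate remark. If one of the parts is empty, $\Pi_r \in \{0, I\}$ and the conclusion is trivial with $P(\omega) \equiv 0$ or $I$. Otherwise Theorem~\ref{t4} applies as stated, because the proof of Lemma~\ref{l35} does not need both parts to be nonempty.

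For part (b) I will first prove rotation invariance of $\sigma(T)$. The plan follows Step~2 of Corollary~\ref{c1}. View $L^p(X)$ as a Banach $C(\mathcal{K})$-module, with $\mathcal{K}$ the Gelfand compact of $L^\infty(\mu)$. From (\ref{eqrules}), $T f = (f\circ\varphi) T$ for scalar $f \in L^\infty(\mu)$. So $T$ is a weighted composition operator on this module relative to the homeomorphism $\psi$ of Remark~\ref{rfrolik}. Aperiodicity of $\varphi$ means $\psi$ has no periodic points (Remark~\ref{rfrolik}), and \cite[Corollary 12.30]{AAK} then gives rotation invariance. That argument did not use any property of $X$ or $E$, so it transfers verbatim.

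Next, decompose $\sigma'$ as in Corollary~\ref{c1}. Let $R = \{|\lambda| : \lambda\in\sigma(T)\} \subset (0,\infty)$, which is compact. Each circle $\{|\lambda| = t\}$ with $t\in R$ is a connected subset of $\sigma(T)$, so it lies entirely in $\sigma'$ or entirely in its complement. Choose radii $s_1<t_1<\dots<s_k<t_k$ outside $R$ so that $P' = \sum_i (\Pi_{t_i} - \Pi_{s_i})$. By (a), each $\Pi_{t_i}$ and $\Pi_{s_i}$ equals $M_{b}$ for some norm-measurable $b$. Since $L^\infty_u(\Omega;L(X))$ is a vector space, $P' = M_{b'}$ with $b'\in L^\infty_u(\Omega;L(X))$. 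Finally, Lemma~\ref{l312} produces a representative $P'(\omega)$ with $P'(\omega)^2 = P'(\omega)$ and $\|P'(\omega)\| \le \|M_{b'}\| = \|P'\|$ for every $\omega$.

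The only step that is not pure bookkeeping is the rotation invariance. I would check that the module framework of \cite{AAK} covers $L^p(X)$ for an arbitrary Banach space $X$, which it should, since only the module action of $L^\infty(\mu)$ and the relation $Tf = (f\circ\varphi)T$ are used. As a backup, Lemma~\ref{l33} and the $N$-separated sets could give a direct argument. One would conjugate $T$ by a multiplier $\sum_j \lambda^j \chi_{\varphi^j(F)}$ built on an $N$-separated set $F$, which locally shows $T \approx \lambda T$ for $|\lambda|=1$. I would rather rely on the cited result.
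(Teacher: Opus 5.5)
Your proposal is correct and follows the paper's proof essentially verbatim. Part (a) applies Theorem~\ref{t4} to $r^{-1}T = M_{A/r}V$, and part (b) combines rotation invariance from \cite[Corollary 12.30]{AAK} via the $C(\mathcal{K})$-module structure, the decomposition $P' = \sum_i(\Pi_{t_i} - \Pi_{s_i})$ from Step~2 of Corollary~\ref{c1}, and Lemma~\ref{l312}; your separate treatment of the degenerate case is harmless but unnecessary, since Theorem~\ref{t4} does not require $\sigma_1$ or $\sigma_2$ to be nonempty.
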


\begin{proof}
(a) The operator $r^{-1}T = M_{A/r}V$ satisfies the assumptions of Theorem~\ref{t4}, and $\Pi_r$ is the spectral projection of $r^{-1}T$ corresponding to the part of $\sigma(r^{-1}T)$ in $\mathds{U}$.

(b) By (\ref{eqrules}), $Tf = (f \circ \varphi)T$ for $f \in L^\infty(\mu)$. Hence, if we consider $L^p(X)$ as a Banach $C(\mathcal{K})$-module, where $\mathcal{K}$ is the Gelfand compact of $L^\infty(\mu)$, then $T$ is a weighted composition operator with respect to the homeomorphism $\psi$ of Remark~\ref{rfrolik}. Since $\varphi$ is aperiodic, $\psi$ has no periodic points (Remark~\ref{rfrolik}), and by \cite[Corollary 12.30]{AAK} the spectrum $\sigma(T)$ is rotation invariant.

Now, as in Step 2 of the proof of Corollary~\ref{c1}, there are $0 < s_1 < t_1 < \dots < s_k < t_k$ such that the circles $\{|\lambda| = s_i\}$, $\{|\lambda| = t_i\}$ do not meet $\sigma(T)$ and $P' = \sum_{i=1}^k (\Pi_{t_i} - \Pi_{s_i})$. By (a), $P' = M_b$ with $b = \sum_{i=1}^k (\Pi_{t_i}(\cdot) - \Pi_{s_i}(\cdot)) \in L^\infty_u(\Omega;L(X))$, where $\Pi_r(\cdot)$ denotes the norm-measurable function representing $\Pi_r$, and it remains to apply Lemma~\ref{l312}.
\end{proof}

\begin{remark} \label{r6}
(1) By a similar but longer argument one can show that $P(\omega)A(\omega) = A(\omega)P(\varphi(\omega))$ for all $\omega$ outside a single set of measure zero, and that the products $a(\omega)a(\varphi(\omega)) \cdots a(\varphi^{n-1}(\omega))$, where $a = \rho^{1/p}A$, have an exponential dichotomy with respect to the splitting $X = P(\omega)X \oplus (I - P(\omega))X$, again outside a single set of measure zero and for all vectors of $X$ simultaneously. This is the measurable counterpart of the equivalence between hyperbolicity of a weighted composition operator and hyperbolicity (exponential dichotomy) of the associated linear extension, see \cite[Theorems 9.2 and 9.3]{An}, \cite[Theorems 19.13 and 21.15]{AL} and the references in \cite[Notes to Sections 19 and 21]{AL} (Mather, Latushkin--Stepin).

(2) We do not know whether Theorem~\ref{t4} remains true if $A$ is only strongly measurable, if $L^p$ is replaced by a general K\"othe space $E$, or if $p = \infty$. In the proof, norm-measurability of $A$ is used in Lemmas~\ref{l31}, \ref{l36} and in part (iii) of the proof of Proposition~\ref{p310}; the structure of $L^p$, through the fact that $V$ is an isometry, is used in Lemma~\ref{l36}; and $p < \infty$ is used in Lemma~\ref{l37}: for $p = \infty$, Remark~\ref{rinfty} gives lower bounds for the orbit operators only on $c_0(\mathds{Z};X)$, and the dual of $L^\infty(X)$ is not a space of functions. The finiteness of $\mu$ is not needed: $\sigma$-finiteness is used only to choose sets of finite positive measure in Lemmas~\ref{l31} and \ref{l36}, and in the duality argument of Lemma~\ref{l37}.

(3) Examples with infinite measure. Let $\Omega = \mathds{R}$ with Lebesgue measure and $\varphi(t) = t+1$. Then $\varphi$ is measure-preserving and aperiodic, and $T$ is the weighted translation $(Tx)(t) = A(t)x(t+1)$ on $L^p(\mathds{R};X)$. This is the situation considered by Chicone and Latushkin \cite[Section 4.1]{CL}, where the coefficients are assumed to be strongly continuous; Theorem~\ref{t4} assumes norm-measurability instead, and neither of these conditions implies the other. If $\Omega = \mathds{Z}$ with the counting measure and $\varphi(n) = n+1$, then every one-point set is $N$-separated for every $N$, so $\varphi$ is aperiodic, and the spectral projections of weighted composition operators on $\ell^p(\mathds{Z};X)$ (in this case, the classical bilateral weighted shifts) are diagonal operators. In this case, however, this follows directly from Remark~\ref{ratomic}. An example with a transformation that does not preserve the measure is $\Omega = \mathds{R}$ with Lebesgue measure and $\varphi(t) = 2t$. Then $\varphi$ is nonsingular and aperiodic, $\rho = 2$, and Theorem~\ref{t4} applies to the operators $(Tx)(t) = A(t)x(2t)$ on $L^p(\mathds{R};X)$ with $A \in L^\infty_u(\mathds{R};L(X))$.

(4) Nonseparable spaces that are not complemented in the bidual. If $X$ is separable or complemented in $X^{\star\star}$, decomposability of the spectral projections already follows from Corollaries~\ref{c1} and \ref{c1a}, and Theorem~\ref{t4} adds the norm-measurability of $P(\cdot)$ and the fact that $P(\omega)$ is a projection for \emph{every} $\omega$ (compare Remark~\ref{r1a}). The main case of interest in Theorem~\ref{t4} is therefore that of a nonseparable space $X$ that is not complemented in $X^{\star\star}$. A typical example is $X = c_0(\Gamma)$ with $\Gamma$ uncountable. Then $X^{\star\star} = \ell^\infty(\Gamma)$, and $c_0(\Gamma)$ is not complemented in $\ell^\infty(\Gamma)$. Indeed, let $\Gamma_0 \subset \Gamma$ be countably infinite. If $Q$ were a bounded projection of $\ell^\infty(\Gamma)$ onto $c_0(\Gamma)$, then $RQJ$, where $J : \ell^\infty(\Gamma_0) \to \ell^\infty(\Gamma)$ is the extension by zero and $R : c_0(\Gamma) \to c_0(\Gamma_0)$ is the restriction, would be a bounded projection of $\ell^\infty(\Gamma_0) \cong \ell^\infty$ onto $c_0(\Gamma_0) \cong c_0$, which contradicts Phillips' theorem \cite{Ph}. Thus on $L^p(\Omega; c_0(\Gamma))$ Theorems~\ref{t1} and \ref{t2} do not apply, and Remark~\ref{r1} gives only a representation of the operators commuting with $L^\infty(\mu)$ by functions with values in $L(c_0(\Gamma), \ell^\infty(\Gamma))$. On the other hand, Theorem~\ref{t4} applies to every $T = M_AV$ with $A \in L^\infty_u(\Omega; L(c_0(\Gamma)))$ satisfying its assumptions, for instance with $A = \sum_j \chi_{G_j}B_j$, where $G_j \in \Sigma$ are pairwise disjoint and $B_j \in L(c_0(\Gamma))$, $\sup_j\|B_j\| < \infty$ (e.g. diagonal operators $(B_jy)(\gamma) = \beta_j(\gamma)y(\gamma)$ with $\beta_j \in \ell^\infty(\Gamma)$), or with a uniform limit of such functions. In this case the spectral projections are given by norm-measurable functions with values in the projections of $L(c_0(\Gamma))$.
\end{remark}

\section*{Acknowledgments}
The authors used Claude, an AI model developed by Anthropic, as an assistant during the preparation of this paper. Claude's contributions were the following.

\emph{Comparison with the literature.} Claude compared our arguments in detail with the books \cite{An} and \cite{AL}, identified the places where similar ideas appear there (the conditional expectation and property $(*)$, topologically and metrically free actions, the commutation argument for spectral subspaces, trajectorial representations, Wiener-type theorems, and hyperbolicity of linear extensions), and supplied the exact references in Sections~\ref{s3} and~\ref{s4}, including Remark~\ref{r0} comparing Corollary~\ref{c2} with \cite[Theorem 9.5]{An}. Claude also suggested the references \cite{CT, DMS, LS}, and found or completed the bibliographic data of \cite{CRS, dPR, vN31, vN49}.

\emph{Checking of proofs.} Claude pointed out that in the case when $X$ is complemented in $X^{\star\star}$ the original formulation claimed more than Theorem~\ref{t2} provides; this led to the separation of Corollaries~\ref{c1} and~\ref{c1a} and to Remark~\ref{r1a}. Claude observed that for purely atomic measures our results hold for arbitrary $X$ (Remark~\ref{ratomic}), and that the example with $\ell^p(\mathds{Z};X)$ in Remark~\ref{r6} is therefore elementary. Claude also found gaps in the original proofs of Corollaries~\ref{c2} and~\ref{c3} and suggested the reference \cite{Ne}, and corrected several statements in the introductory part of Section~\ref{s4} that overstated what follows from the results of Section~\ref{s3}. Claude pointed out that the proofs of Theorem~\ref{t2} and Corollary~\ref{c1} needed more detail, proposed the extension to arbitrary clopen subsets of the spectrum (Step 2 of the proof of Corollary~\ref{c1} and Corollary~\ref{c313}), suggested discussing the example $X = c_0(\Gamma)$ in Remark~\ref{r6}(4), and worked out the details of Remark~\ref{rfrolik} following the argument of the authors based on Frol\'{\i}k's theorem.

\emph{Exposition.} Claude restructured the presentation of the proof of Theorem~\ref{t4} (the outline in Subsection~\ref{ssoutline}, Corollary~\ref{c311} and Lemma~\ref{l312}), rewrote the definition of the orbit operators, changed the notation in the description of the results of \cite{AL, An} to avoid conflicts with ours, worked out the details of the extension of Theorem~\ref{t4} from measure-preserving to nonsingular transformations proposed by the authors, proposed and worked out the extension of Theorem~\ref{t4} to the case $p = 1$, drafted the introduction and the list of notation, and edited the text and the bibliography.

All mathematical results and their final formulations are the responsibility of the authors.


\begin{thebibliography}{99}
\bibitem{AAK} Y.~A. Abramovich, E.~L. Arenson, and A.~K. Kitover, \emph{Banach $C(K)$-modules and operators preserving disjointness}, Pitman Research Notes in Mathematics Series, vol.~277, Longman, Harlow, 1992.
\bibitem{An} A. Antonevich, \emph{Linear functional equations. Operator approach}, Operator Theory: Advances and Applications, vol.~83, Birkh\"auser, Basel, 1996.
\bibitem{AL} A. Antonevich and A. Lebedev, \emph{Functional differential equations. I. $C^\star$-theory}, Pitman Monographs and Surveys in Pure and Applied Mathematics, vol.~70, Longman, Harlow, 1994.
\bibitem{Be} E. Behrends, \emph{$M$-structure and the Banach--Stone theorem}, Lecture Notes in Mathematics, vol.~736, Springer, Berlin, 1979.
\bibitem{Bl} B. Blackadar, \emph{Operator algebras. Theory of $C^\star$-algebras and von Neumann algebras}, Encyclopaedia of Mathematical Sciences, vol.~122, Springer, Berlin, 2006.
\bibitem{Br} H. Brezis, \emph{Functional analysis, Sobolev spaces and partial differential equations}, Universitext, Springer, New York, 2011.
\bibitem{CRS} J.~M. Calabuig, J. Rodr\'{\i}guez, and E.~A. S\'anchez-P\'erez, \emph{Multiplication operators in K\"othe--Bochner spaces}, J. Math. Anal. Appl. \textbf{373} (2011), no.~1, 316--321.
\bibitem{CV} C. Castaing and M. Valadier, \emph{Convex analysis and measurable multifunctions}, Lecture Notes in Mathematics, vol.~580, Springer, Berlin, 1977.
\bibitem{CL} C. Chicone and Y. Latushkin, \emph{Evolution semigroups in dynamical systems and differential equations}, Mathematical Surveys and Monographs, vol.~70, American Mathematical Society, Providence, RI, 1999.
\bibitem{CT} J.-M. Combes and L. Thomas, \emph{Asymptotic behaviour of eigenfunctions for multiparticle Schr\"odinger operators}, Comm. Math. Phys. \textbf{34} (1973), 251--270.
\bibitem{DMS} S. Demko, W.~F. Moss, and P.~W. Smith, \emph{Decay rates for inverses of band matrices}, Math. Comp. \textbf{43} (1984), no.~168, 491--499.
\bibitem{Di} J. Dixmier, \emph{Von Neumann algebras}, North-Holland Mathematical Library, vol.~27, North-Holland, Amsterdam, 1981.
\bibitem{DKO} H. Duru, A. Kitover, and M. Orhon, \emph{Multiplication operators on vector-valued function spaces}, Proc. Amer. Math. Soc. \textbf{141} (2013), no.~10, 3501--3513.
\bibitem{Fr} Z. Frol\'{\i}k, \emph{Fixed points of maps of extremally disconnected spaces and complete Boolean algebras}, Bull. Acad. Polon. Sci. S\'er. Sci. Math. Astronom. Phys. \textbf{16} (1968), 269--275.
\bibitem{Go} H.~H. Goldstine, \emph{Weakly complete Banach spaces}, Duke Math. J. \textbf{4} (1938), 125--131.
\bibitem{IT} A. Ionescu Tulcea and C. Ionescu Tulcea, \emph{Topics in the theory of lifting}, Ergebnisse der Mathematik und ihrer Grenzgebiete, vol.~48, Springer, Berlin, 1969.
\bibitem{KA} L.~V. Kantorovich and G.~P. Akilov, \emph{Functional analysis}, 2nd ed., Pergamon Press, Oxford, 1982.
\bibitem{LS} Yu.~D. Latushkin and A.~M. Stepin, \emph{Weighted translation operators and linear extensions of dynamical systems}, Uspekhi Mat. Nauk \textbf{46} (1991), no.~2, 85--143; English transl., Russian Math. Surveys \textbf{46} (1991), no.~2, 95--165.
\bibitem{Ma} D. Maharam, \emph{On a theorem of von Neumann}, Proc. Amer. Math. Soc. \textbf{9} (1958), 987--994.
\bibitem{Ne} J.~D. Newburgh, \emph{The variation of spectra}, Duke Math. J. \textbf{18} (1951), 165--176.
\bibitem{vN31} J. von Neumann, \emph{Algebraische Repr\"asentanten der Funktionen ``bis auf eine Menge vom Ma{\ss}e Null''}, J. Reine Angew. Math. \textbf{165} (1931), 109--115.
\bibitem{vN49} J. von Neumann, \emph{On rings of operators. Reduction theory}, Ann. of Math. (2) \textbf{50} (1949), 401--485.
\bibitem{dPR} B. de Pagter and W.~J. Ricker, \emph{Algebras of multiplication operators in Banach function spaces}, J. Operator Theory \textbf{42} (1999), 245--267.
\bibitem{Ph} R.~S. Phillips, \emph{On linear transformations}, Trans. Amer. Math. Soc. \textbf{48} (1940), 516--541.
\bibitem{Ry} R.~A. Ryan, \emph{Introduction to tensor products of Banach spaces}, Springer Monographs in Mathematics, Springer, London, 2002.
\end{thebibliography}
\end{document}